\setlist[enumerate]{topsep=0pt,itemsep=-1ex,partopsep=1ex,parsep=1ex}
\theoremstyle{plain}
\newtheorem{theo}{Theorem}[section]
\newtheorem{lemma}[theo]{Lemma}
\newtheorem{cor}[theo]{Corollary}
\theoremstyle{definition}
\newtheorem{defn}[theo]{Definition}
\newtheorem{rem}[theo]{Remark}
\newcommand{\mc}[1]{\mathcal{#1}}
\newcommand{\mb}[1]{\mathbb{#1}}
\newcommand{\nim}[1]{\noindent {\em #1}}
\newcommand{\brac}[1]{\left( #1 \right)}
\newcommand{\bsize}[1]{\left| #1 \right|}
\newcommand{\bgen}[1]{\left\langle #1 \right\rangle}
\newcommand{\bfl}[1]{\left\lfloor #1 \right\rfloor}
\newcommand{\bcl}[1]{\left\lceil #1 \right\rceil}
\newcommand{\sub}{\subseteq}
\newcommand{\sups}{\supseteq}
\newcommand{\sm}{\setminus}
\newcommand{\eps}{\varepsilon}
\newcommand{\es}{\emptyset}
\newcommand{\pl}{\partial}
\newcommand{\aA}{\alpha}
\newcommand{\tT}{\theta}
\newcommand{\sS}{\sigma}
\newcommand{\oO}{\omega}
\newcommand{\GG}{\Gamma}
\newcommand{\OO}{\Omega}
\newcommand{\Ups}{\Upsilon}
\newcommand{\hQ}{\hat{Q}}
\newcommand{\var}{\operatorname{Var}}
\title{A short proof of the existence of designs}
\author{Peter Keevash\thanks{Mathematical Institute, University of Oxford, UK. 
Supported by ERC Advanced Grant 883810.}}
\date{}
\begin{document}
\maketitle

\begin{abstract}
We give a new proof of the existence of designs,
which is much shorter and gives better bounds.
\end{abstract}

\section{Introduction}

The existence of designs was one of the oldest problems in combinatorics, 
studied by many 19th century mathematicians, including (perhaps in chronological order) 
Pl\"ucker, Kirkman, Sylvester, Woolhouse, Cayley and Steiner.
Steiner's name became attached to the fundamental object of interest,
namely a \emph{Steiner system} with parameters $(n,q,r)$, which is a set $S$ 
of $q$-subsets of an  $n$-set $X$ such that  every $r$-subset of $X$ 
is contained in exactly one element of $S$. There are some natural necessary
divisibility conditions (discussed below) on $n$ in terms of $q$ and $r$
for the existence of such a system $S$. The existence conjecture states that
these conditions suffice for $n>n_0(q,r)$ large enough in terms of $q$ and $r$.
There are now three quite different proofs of this conjecture, chronologically \cite{K, GKLO, DP};
we refer the reader to these papers for more history of the problem
and references to the large associated literature (the solution of the conjecture
has inspired several other dramatic breakthroughs in design theory).
The purpose of this paper is to give yet another proof, which has the advantages
of being much shorter and giving a reasonable bound for $n_0$.

To state the result, we adopt the formulation of hypergraph decompositions.
We denote the complete $r$-graph on a set $S$ by $\binom{S}{r}$:~this is 
a hypergraph with vertex set $S$ and edge set consisting of all $r$-subsets of $S$.
We also write $K^r_n = \tbinom{[n]}{r}$, where $[n]:=\{1,\dots,n\}$. 
We identify hypergraphs with their edge sets, so $|H|$ counts edges (we let $v_H$ count vertices).
A Steiner system with parameters $(n,q,r)$ is equivalent to a $K^r_q$-decomposition of $K^r_n$,
that is, a partition $D$ of (the edge set of) $K^r_n$ into copies of $K^r_q$. We can also think of $D$
as a perfect matching (partition of the vertex set into edges)
in the design hypergraph $K^r_q(K^r_n)$, where for an $r$-graph $G$ we write
$K^r_q(G)$ for the $\tbinom{q}{r}$-graph $H$ with $V(H)=G$ (vertices of $H$ are edges of $G$)
and edges consisting of all (edge sets of) copies of $K^r_q$ in $G$.
The necessary divisibility conditions mentioned above appear
in the integral relaxation, where for $\Phi \in \mb{Z}^{K^r_q(G)}$ we define 
$\pl \Phi \in \mb{Z}^G$ by $(\pl \Phi)_e = \sum \{ \Phi_Q: e \in Q \}$:~if $\pl \Phi = G' \sub G$
we call $\Phi$ an integral $K^r_q$-decomposition of $G'$, noting that if $\Phi$ is $\{0,1\}$-valued
then we can identify $\Phi$ with a (true) $K^r_q$-decomposition of $G'$. For $G \sub K^r_n$
we say that $G$ is $K^r_q$-divisible if $\pl \Phi = G$ for some  $\Phi \in \mb{Z}^{K^r_q(K^r_n)}$;
thus $K^r_q$-divisibility is a necessary condition for having a $K^r_q$-decomposition. 

\begin{theo} \label{thm:steiner} 
For all $q>r \ge 1$ there is $n_0$ so that if $n \ge n_0$ and $K^r_n$ is $K^r_q$-divisible
then $K^r_n$ has a $K^r_q$-decomposition.
\end{theo}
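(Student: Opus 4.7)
The plan is an absorption-plus-nibble argument. Because $K^r_q$-divisibility of $K^r_n$ is \emph{defined} in the paper as the existence of an \emph{integral} $K^r_q$-decomposition, the entire task is to upgrade a signed $\mb{Z}$-valued decomposition to a $\{0,1\}$-valued one. Concretely I would set aside a small random absorber $A \sub K^r_q(K^r_n)$, apply the nibble on the complement to near-decompose $K^r_n \sm \pl A$, then use $A$ to mop up the small residual leave.

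\textbf{Step 1 (Absorber).} Sample each copy of $K^r_q$ in $K^r_n$ independently with some small probability $p = p(n,q,r)$ to form $A$. The key property I need is \emph{flexibility}: for every $K^r_q$-divisible leave $L \sub K^r_n$ with $|L| = o(n^r)$, there should exist a genuine $K^r_q$-decomposition of $L \cup \pl A$ using copies from $A \cup K^r_q(L \cup \pl A)$. Divisibility of $L$ (inherited from $K^r_n$) provides an integral decomposition $\Phi_L$; to cancel its negative coefficients I would build, via concentration, a rich supply of \emph{swap gadgets}---pairs $(Q_1,Q_2)$ of $K^r_q$-copies in $A$ with $\pl(Q_1-Q_2)$ supported in a prescribed region, so that substituting $Q_1$ for $Q_2$ flips a $-1$ coefficient of $\Phi_L$ to $+1$ while perturbing the leave only locally.

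\textbf{Step 2 (Nibble and finish).} Apply the Pippenger--Spencer semi-random matching theorem to the $\binom{q}{r}$-graph $K^r_q(K^r_n \sm \pl A)$. For $n$ large and $p$ chosen appropriately, this auxiliary hypergraph is almost regular with codegrees $o(D)$, yielding a partial $K^r_q$-decomposition whose leave $L$ has size $o(n^r)$. Since $K^r_n$ is $K^r_q$-divisible and the nibble portion together with $\pl A$ are integrally decomposable, $L$ is itself $K^r_q$-divisible. Apply the absorbing property from Step 1 to conclude.

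\textbf{Main obstacle.} The crux is the absorber in Step 1: it must be small enough that its removal does not spoil the almost-regularity needed by the nibble, yet structured enough to absorb \emph{every} small divisible leave rather than a pre-specified one. Previous proofs achieved this via algebraic templates \cite{K}, vortex iteration \cite{GKLO}, or refined absorption \cite{DP}; the novelty enabling the advertised short proof and improved $n_0$ should be a more direct random or combinatorial construction of swap gadgets together with a streamlined integral-to-$\{0,1\}$ correction, avoiding the deep recursive structures of the earlier arguments.
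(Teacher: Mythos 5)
Your high-level outline (reserve/absorb/nibble/finish) matches the paper's framework, and your observation that divisibility reduces the whole problem to upgrading an integral decomposition to a $\{0,1\}$-valued one is exactly the right starting point. However, Step 1 as written has a genuine gap, and it is the gap: the entire difficulty of the theorem lives there.

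First, an absorber obtained by sampling cliques independently and verified ``via concentration'' cannot work for \emph{every} small divisible leave. The number of candidate leaves is $2^{\Theta(n^{r})}$ (or $2^{\Theta(n^{r-\rho})}$ even after restricting to a sparse reserve), so no union bound over leaves is available; this is precisely why the existence conjecture resisted purely probabilistic attacks. The paper circumvents this by establishing a \emph{linear-algebraic} property once and for all: it constructs a bounded family $\mc{Q}_1$ of cliques (the ``integral absorber'', Lemma \ref{lem:Aint}) such that every divisible $J$ supported in the reserve $R$ lies in the integer span of $\{\pl Q : Q \in \mc{Q}_1\}$. That spanning statement holds simultaneously for all $J$ by linearity, and its proof is itself a substantial construction (random rotations of a sparse graph, generating versus saturated cliques, a second-moment argument boosted over disjoint colour classes). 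Your proposal contains no substitute for this.

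Second, your swap gadgets are underspecified at the point where they must do real work. Flipping a $-1$ coefficient to $+1$ ``while perturbing the leave only locally'' is the right instinct (it is essentially the paper's clique-exchange $\OO$ with its two decompositions $\Ups^\pm$), but the perturbations accumulate: each exchange introduces new signed cliques, which may themselves overlap each other, reuse edges of the absorber, or create new negative cliques. The paper needs three rounds of carefully tracked exchanges (splitting, elimination, further elimination) with explicit edge-disjointness bookkeeping to guarantee that the final negative cliques form an edge-disjoint family $\mc{Q}^-$, so that the identity $L = \pl D^+ - \pl D^-$ with $D^- \sub \mc{Q}^-$ actually yields the decomposition $D^+ \cup (\mc{Q}^- \sm D^-)$ of $A \cup L$. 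There is also a bounded-coefficient issue you do not address: the integral decomposition $\Phi_L$ furnished by divisibility may have arbitrarily large coefficients, and the paper must reduce them modulo $N=r!\binom{q}{r}$ using Wilson's local decoders before any finite gadget supply could suffice. Finally, your nibble step produces a leave that is merely $o(n^r)$ in size with no degree control and no localization; the paper's reserve-and-cover mechanism (Lemmas \ref{lem:R} and \ref{lem:cover}) is what confines the leave to a pre-chosen sparse graph $R$, and the absorber is built \emph{for that $R$} -- an absorber for arbitrary small divisible leaves would be a strictly harder object that no known argument produces.
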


Our goal in this paper is to present a new proof of the existence of designs,
which is much shorter than the previous proofs, as well as being self-contained;
here one should note that  \cite{DP} is far from being self-contained, as it makes black-box use of
large parts of \cite{GKLO} and previous papers on approximate decompositions,
whereas the brevity of our paper is not at the expense of explanations,
as we give full details of the proof and (with a view to potential graduate courses)
even devote several expository sections to giving shorter and more efficient proofs of some ingredients 
that already exist in the literature (albeit with longer proofs and inferior bounds in some cases).
Our argument applies to more general decomposition problems
(such as replacing the `host' $K^r_n$ by a typical $r$-multigraph
and the `guest' $K^r_q$ by a general $r$-graph),
but for simplicity we will only present the argument for Steiner systems.

Roughly speaking, our new proof mixes some ideas from all of the three previous proofs
with a key new idea that leads to a much simpler construction of the absorber
(the key technical ingredient of all previous proofs, as explained below).
After the proof overview (following the framework of \cite{DP}) in Section \ref{sec:pf},
the new construction is presented in Sections \ref{sec:prep} and \ref{sec:abs},
and its analysis is completed in Section \ref{sec:rga}. One ingredient of the absorber
is an integral absorber, which was first developed in \cite{K}; in Section \ref{sec:int}
we present an alternative construction based on the method of \cite{KeSS}, which is not shorter
but has the advantage of being more widely applicable; indeed, our proof of Theorem \ref{thm:steiner}
does not rely on the equivalence of $K^r_q$-divisibility as defined above with the usual formulation
in terms of degree divisibility conditions (but for completeness we will include a short proof of
this equivalence, see Remark \ref{rem:div}). The remainder of the paper is essentially expository,
with some simplifications and quantitative improvements:~we discuss local decoders
and regularity boosting  in Section \ref{sec:decode+boost},
martingale concentration in Section \ref{sec:conc},
and the  clique removal process in Section \ref{sec:removal}.
The final section discusses quantitative aspects (the choice of $n_0$).

Throughout we adopt the following parameters:
\begin{equation}
\label{param}
\text{Let } q>r \ge 1, \quad k=\tbinom{q}{r}, \quad \rho = (6k)^{-2}, \quad \aA = (2q)^{-r} \rho, \quad n > n_0 = (4q)^{90q/\aA} = (4q)^{90q(2q)^r(6k)^2}.
\end{equation}

\nim{Notation.} As indicated above, we identify sets with their characteristic vectors,
and will extend this notation to multisets and `intsets' (signed multisets):
we identify $v \in \{0,1\}^X$ with the set $\{x \in X: v_x=1\}$;
we identify $v \in \mb{N}^X$ with the multiset in $X$ 
where each $x$ has multiplicity $v_x$ (for our purposes $0 \in \mb{N}$);
we identify $v \in \mb{Z}^X$ with the signed multiset in $X$ 
where each $x$ has $|v_x|$ copies of the appropriate sign.
If $G$ is a hypergraph, $v \in \mb{Z}^G$ and $e \sub V(G)$,
the neighbourhood of $e$ in $v$ is $v(e) \in \mb{Z}^{G(e)}$ 
defined by $v(e)_f = v_{e \cup f}$ for $f \in G(e)$.
Thus, if $v$ is the characteristic vector of a subgraph $G' \sub G$
then $v(e)$ is the characteristic vector of the neighbourhood $G'(e)$ of $e$ in $G'$.

We say $v \in \mb{Z}^{K^r_n}$ is $\tT$-bounded if $\sum \{ |v_e|: f \sub e \in K^r_n \} < \tT n$ 
for all $f \in \tbinom{[n]}{r-1}$. Thus, if $v$ is the characteristic vector of an $r$-graph $G \sub K^r_n$
then $v$ is $\tT$-bounded if $G$ has maximum $(r-1)$-degree at most $\tT n$.

We say that an event $E$ holds whp (with high probability) if $\mb{P}(E) > 1-e^{-n/10}$;
we adopt this non-standard terminology with an explicit constant so that one can verify
that a union bound over all whp events considered in the paper is valid
with the choice of $n_0$ in \eqref{param}. If one is not concerned with the bound on $n_0$
it suffices to note that we take a union bound over polynomially many whp events
each having failure probability $e^{-\Omega(n^c)}$ for some $c>0$.

For any $r$-graph $G$ and $p \in [0,1]$ we write $G(p)$ for the $p$-random subgraph of $G$,
where each edge of $G$ is present independently with probability $p$.

\section{Proof modulo lemmas}  \label{sec:pf}

Our strategy for proving Theorem \ref{thm:steiner} proceeds via the following five steps.
In broad outline, as in all previous proofs, it is an absorption strategy, following the framework of \cite{DP},
with simplifications to remove extra properties that are not needed for our current purpose.
The terms `absorber' and `regularity boosted'  will be explained below.

\begin{enumerate}[label=\arabic*.]
\item Randomly reserve a sparse subgraph $R \sub K^r_n$, to be used in Step 4. 
\item Find an `absorber' $A \sub K^r_n \sm R$, to be used in Step 5.
\item Find a `regularity boosted' set of $q$-cliques $H \sub K^r_q(G)$, where $G := K^r_n \sm (A \cup R)$.
\item Find edge-disjoint cliques $D \sub H$ with leave $L := G \sm \bigcup D \sub R$.
\item Find clique decompositions $D_L$ of $A \cup L$, so $D \cup D_L$ of $G$.
\end{enumerate}

Now we will state the lemmas that implement each of the above steps.
The first step uses the following lemma, which follows easily from concentration inequalities
(Chernoff bounds); we will give the proof in Section \ref{sec:rga}.

\begin{lemma} \label{lem:R} (Reserve)
For any $q,r,k,\rho,n$ as in \eqref{param},
there is some $2n^{-\rho}$-bounded $R \sub K^r_n$ such that any edge $e \in K^r_n \sm R$
belongs to at least $n^{-k\rho} n^{q-r}$ copies $Q$ of $K^r_q$ with $Q \sm \{e\} \sub R$.
\end{lemma}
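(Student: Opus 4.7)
The plan is to take $R := K^r_n(p)$, the $p$-random subgraph of $K^r_n$, with $p := n^{-\rho}$, and verify that both required properties hold with high probability; existence of a deterministic $R$ with the desired properties then follows by a union bound.

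Boundedness follows immediately from Chernoff: for each $(r-1)$-set $f$, the codegree $|R(f)|$ is binomial with mean $p(n-r+1) < n^{1-\rho}$, so $\mb{P}(|R(f)| \geq 2n^{1-\rho}) \leq e^{-\OO(n^{1-\rho})}$ by an upper-tail Chernoff bound. A union bound over the $O(n^{r-1})$ choices of $f$ shows $R$ is $2n^{-\rho}$-bounded whp.

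For the covering property, fix $e \in K^r_n$ and let $X_e$ count the copies $Q$ of $K^r_q$ containing $e$ with $Q \sm \{e\} \sub R$. Since $X_e$ depends only on the indicators $\mb{1}[e' \in R]$ for $e' \neq e$, I would compute $\mb{E}[X_e] = \tbinom{n-r}{q-r} p^{k-1}$, which for $n$ large exceeds $2n^{q-r} n^{-k\rho}$ thanks to the factor $n^\rho$ slack between $p^{k-1} \sim n^{-(k-1)\rho}$ and the target exponent $n^{-k\rho}$. To show concentration, I would apply Janson's lower-tail inequality, since $X_e$ is a sum of indicators of monotone increasing events on the independent Bernoullis $\{\mb{1}[e' \in R]:e' \neq e\}$. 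The relevant variance proxy $\DD = \sum p^{|(Q \cup Q') \sm \{e\}|}$, summed over unordered pairs of distinct copies through $e$ with $Q \cap Q' \supsetneq \{e\}$, admits an easy estimate by stratifying over $v := |V(Q) \cap V(Q')| \in \{r+1, \ldots, q-1\}$: the dominant term at $v=r+1$ gives $\DD = n^{-1 + O(\rho)} \mb{E}[X_e]^2$, so Janson yields $\mb{P}\brac{X_e \leq \mb{E}[X_e]/2} \leq \exp\brac{-\OO(n^{1-O(\rho)})}$. A union bound over the $O(n^r)$ edges $e$ completes the proof; the `$e \notin R$' qualifier in the lemma is handled automatically, since the lower bound on $X_e$ is obtained for every $e$ (and $X_e$ itself does not depend on whether $e$ is in $R$).

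Main obstacle: Because $X_e$ is a polynomial of degree $k-1$ in independent Bernoullis rather than a simple sum, Chernoff does not apply directly and Azuma's inequality is not sharp enough. Janson's lower-tail inequality is the natural tool and suffices once the combinatorial estimate of $\DD$ (a short case analysis of intersection patterns for two $K^r_q$-copies through $e$) is in place. If even sharper concentration is required to exactly match the paper's stringent $e^{-n/10}$ whp threshold, one may appeal to Kim--Vu polynomial concentration or to a martingale/Bernstein analysis of $X_e$ with respect to the one-edge-at-a-time filtration; but any of these standard refinements easily produces a bound of the qualitative form $e^{-n^{\OO(1)}}$ needed for the union bound.
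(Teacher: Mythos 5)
Your proposal is correct, but for the covering property it takes a genuinely different route from the paper. Both proofs take $R \sim K^r_n(n^{-\rho})$ and handle boundedness by a Chernoff bound on codegrees. For the clique count, however, the paper never applies a concentration inequality to the degree-$(k-1)$ polynomial $X_e$ at all: it first establishes (via Lemma \ref{lem:randomtyp}) that $R$ is $(n^{-0.1},k)$-typical — a statement only about common neighbourhoods of at most $k$ many $(r-1)$-sets, each of which is a sum of \emph{independent} vertex indicators, so plain Chernoff suffices — and then counts the cliques $Q_e$ deterministically by extending $e$ one vertex at a time, the number of choices for the $i$th vertex being a common neighbourhood of $\tbinom{r+i-1}{r-1}\le k$ sets controlled by typicality. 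You instead attack $X_e$ directly with Janson's lower-tail inequality, which does work: the events $\{Q\sm\{e\}\sub R\}$ are up-sets of the required product form, your overlap computation (dependency iff $|V(Q)\cap V(Q')|\ge r+1$, dominant term at $v=r+1$) is right, and the resulting failure probability $e^{-\OO(n^{1-O(\rho)})}$ is entirely adequate — note the paper's own typicality lemma also only achieves $e^{-\OO(n^{c})}$ for some $c<1$, and the text explicitly says this suffices for the union bound if one is not optimizing $n_0$, so your worry about matching $e^{-n/10}$ exactly is moot. The trade-off: your argument is shorter and more direct but imports Janson as a black box, whereas the paper's detour through typicality keeps everything within the Chernoff-type inequalities it proves itself (Lemma \ref{lem:pseudobin}) and reuses a lemma needed elsewhere anyway (e.g.\ in Lemma \ref{lem:KSG}).
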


The second step uses the following lemma, in which we see the meaning of `absorber'
(also called `omni-absorber' in \cite{DP}): it is a subgraph $A$ disjoint from $R$
such that $A \cup L$ has a $K^r_q$-decomposition for any $K^r_q$-divisible $L \sub R$.
The main novelty of this paper is in finding a relatively simple proof of this lemma.
Our formulation is somewhat weaker than that in \cite{DP} but sufficient for our purposes.
(They show that the boundedness of $A$ can be just a constant factor larger than that of $R$.
Their absorber is also refined, in the sense that the decomposition of $A \cup L$ only uses
cliques from some fixed family $\mc{Q}$ such that every edge is in only constantly many cliques
of $\mc{Q}$; we also obtain this property, although we do not need it, 
so for simplicity we do not include it in our statement).

\begin{lemma} \label{lem:A} (Absorber)
For any $q,r,\rho,\aA,n$ as in \eqref{param} and $n^{-\rho}$-bounded $R \sub K^r_n$, 
there is some $n^{-\aA/4}$-bounded $K^r_q$-divisible $A \sub K^r_n \sm R$ that is an absorber for $R$,
that is, $A \cup L$ has a $K^r_q$-decomposition for any $K^r_q$-divisible $L \sub R$.
\end{lemma}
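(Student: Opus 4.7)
The plan is to build $A$ in two stages as a disjoint union $A = A_1 \cup A_2$, with each piece $n^{-\aA/4}$-bounded.

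\emph{Stage 1 (integral absorption).} Using the machinery of Section \ref{sec:int}, construct $A_1 \sub K^r_n \sm R$ such that for every $K^r_q$-divisible $L \sub R$ there is an integral decomposition $\Phi_L \in \mb{Z}^{K^r_q(A_1 \cup L)}$ with $\pl \Phi_L = A_1 \cup L$. Taking $L = \es$ also yields an integral $K^r_q$-decomposition of $A_1$ alone, so $A_1$ (and hence $A$) is itself $K^r_q$-divisible as required by the lemma.

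\emph{Stage 2 (rounding).} The non-trivial part is converting the signed multiset $\Phi_L$ into a genuine $\{0,1\}$-valued $K^r_q$-decomposition of $A \cup L$. I would add a flexibility structure $A_2 \sub K^r_n \sm (R \cup A_1)$ consisting of randomly chosen $q$-cliques, together with a catalogue of local ``swap'' relations. The basic algebraic identity to exploit is that the $q{+}1$ sub-cliques of type $K^r_q$ inside a fixed $K^r_{q+1}$ satisfy a linear relation; iterating such identities, any coefficient of $\Phi_L$ lying outside $\{0,1\}$ can be reduced, provided $A_2$ contains enough auxiliary cliques near each problematic $Q$. The existence of sufficiently many such cliques, disjoint from $R \cup A_1$, should follow from a random greedy construction combined with a concentration argument in the same spirit as Lemma \ref{lem:R}.

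\emph{Main obstacle.} The hardest part is keeping the total contribution $n^{-\aA/4}$-bounded while simultaneously (i) providing enough rounding gadgets in $A_2$ to handle every $L$, of which there are exponentially many in $|R|$, (ii) ensuring the entire $A$ remains $K^r_q$-divisible, and (iii) producing the prescribed dependence $\aA = (2q)^{-r}\rho$. Since gadgets in $A_2$ must be re-used across many different $L$'s, a naive per-$L$ construction would blow up the boundedness parameter; the key novelty promised by the paper is presumably a clean way to organise this sharing so that $A_2$ fits inside the $n^{-\aA/4}$ budget, with the factor $(2q)^{-r}$ in $\aA$ reflecting the local nature of each swap operation within a $K^r_{q+1}$-neighbourhood.
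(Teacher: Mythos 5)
Your two-stage outline (integral absorber, then rounding) matches the paper's high-level plan, but Stage 2 contains a genuine gap, and in fact two. First, the ``basic algebraic identity'' you propose to exploit does not exist: if $Q_0,\dots,Q_q$ are the $q+1$ copies of $K^r_q$ inside a fixed $K^r_{q+1}$, then $\sum_v c_v \pl Q_v = 0$ forces all $c_v=0$ (each $r$-set $e$ gives the equation $\sum_v c_v = \sum_{v \in e} c_v$, whence all $c_v$ are equal and then zero). What one actually needs to modify $\Phi_L$ without changing $\pl \Phi_L$ is a nontrivial element of the kernel of $\pl$, i.e.\ an $r$-graph with \emph{two distinct} $K^r_q$-decompositions, and such a ``trade'' necessarily lives on many more than $q+1$ vertices. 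Supplying a usable gadget of this kind, with the disjointness properties needed later, is exactly the content of Lemma \ref{lem:OO} (the $r$-graph $\OO$ with decompositions $\Ups^\pm$ and designated cliques $\hQ^+$, $\hQ^e$), and it is the main new construction of the paper rather than a routine step.

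Second, the difficulty you correctly flag as the ``main obstacle'' --- providing rounding gadgets that work simultaneously for exponentially many $L$ within a single boundedness budget --- is the heart of the lemma, and your proposal leaves it unresolved. The paper's resolution is structural, not a refined counting of shared gadgets: a single family $\mc{Q}=\mc{Q}^+\cup\mc{Q}^-$ is built once (splitting every clique of the integral absorber a bounded number of times, then eliminating cancelling pairs), arranged so that the negative cliques $\mc{Q}^-$ are pairwise \emph{edge-disjoint}, and the absorber is defined as $A=\bigcup\mc{Q}^-$. For each $L$ the integral solution is pushed through the splitting/elimination exchanges to a representation $L=\pl D^+ - \pl D^-$ with $D^\pm\sub\mc{Q}^\pm$, and then $D^+\cup(\mc{Q}^-\sm D^-)$ is a genuine decomposition of $A\cup L$; divisibility of $A$ is automatic since it is an edge-disjoint union of cliques, and boundedness follows from a fixed number of random greedy rounds independent of $L$. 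Without this device (or an equivalent one), ``reducing coefficients outside $\{0,1\}$'' by local swaps has no mechanism for terminating with a decomposition of $A\cup L$ that uses only edges of $A\cup L$, so the proposal as written does not yield the lemma.
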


The third step uses the following lemma, a special case of \cite[Lemma 6.3]{GKLO},
in which we see the meaning of `regularity boosted': given some `almost' complete $r$-graph $G$,
which in our application will be $K^r_n \sm (A \cup R)$, we find a set $H$ of $q$-cliques in $G$
(equivalently, a subgraph $H$ of the design hypergraph $K^r_q(G)$)
such that every edge $e$ of $G$ belongs to `essentially' the same number of cliques in $H$.
The point of this step is that the precise meaning of `essentially' 
involves a much better approximation than that in the precise meaning of `almost'.
Without this `boost', the error term in the first part of Step 4 below
would be too large for the second part of Step 4 to be achievable.

\begin{lemma} \label{lem:reg} (Boost)
For any $q,r,n$ as in \eqref{param},
if $G \sub K^r_n$ and $K^r_n \sm G$ is $c$-bounded with $c<2^{-3q}$ then there is 
$H \sub K^r_q(G)$ with $|H(e)| = (1/2 \pm n^{-1/3}) \tbinom{n}{q-r}$ for all $e \in V(H) = E(G)$.
\end{lemma}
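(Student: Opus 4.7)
The plan is a standard random-plus-correction construction. I would start by defining $H_0 \sub K^r_q(G)$ by including each $q$-clique independently with probability $1/2$. Applying Chernoff to $|H_0(e)|$---a sum of $|K^r_q(G)(e)| = \Theta(n^{q-r})$ independent Bernoulli$(1/2)$ variables---gives $|H_0(e)| = |K^r_q(G)(e)|/2 \pm O(\sqrt{n^{q-r} \log n})$ whp. Since $q > r$, this concentration error is comfortably below $n^{-1/3}\tbinom{n}{q-r}$, and a union bound over $e \in E(G)$ handles the whp quantification.

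The main content is controlling the \emph{mean} $|K^r_q(G)(e)|/2$ relative to the target $\tbinom{n}{q-r}/2$. Using the $c$-boundedness of $K^r_n \sm G$, I would bound the number of ``bad'' $q$-cliques in $K^r_n$ containing $e$ but using at least one non-edge of $G$: for each such clique pick a non-edge $f \in K^r_n \sm G$ inside it, classify by $s := r - |f \cap e| \ge 1$, and apply $c$-boundedness after fixing all but one vertex of $f$. Summing over $s$ yields a total of $O_q(c)\tbinom{n}{q-r}$ bad cliques, so $|K^r_q(G)(e)| = (1 - O_q(c))\tbinom{n-r}{q-r}$.

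Because $c < 2^{-3q}$ is only a fixed constant (not $o(1)$), the discrepancy $|K^r_q(G)(e)|/2 - \tbinom{n}{q-r}/2$ is of order $O_q(c)\tbinom{n}{q-r}$, which still far exceeds the allowed $n^{-1/3}\tbinom{n}{q-r}$ error. A correction step is therefore essential: identify edges where $|H_0(e)|$ exceeds the target (surplus) and those where it falls short (deficit), and swap cliques in/out of $H$ to simultaneously move both toward $\tbinom{n}{q-r}/2$. The main obstacle---and the source of the difficulty in \cite{GKLO}---is that each clique touches $\tbinom{q}{r}$ edges, so each swap perturbs many degrees at once; coordinating the swaps to land within $n^{-1/3}\tbinom{n}{q-r}$ of the target for \emph{all} edges simultaneously, while still keeping $H \sub K^r_q(G)$, is the delicate step. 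I would follow the iterative/greedy template of \cite{GKLO}, exploiting the high density and strong edge-regularity of $K^r_q(G)$ afforded by the boundedness hypothesis to guarantee at each stage an abundance of switching pairs with small footprint on the remaining degrees.
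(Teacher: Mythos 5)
Your setup is right and your diagnosis of the difficulty is exactly correct: the naive $1/2$-random selection concentrates around the wrong mean, off by $\Theta_q(c)\tbinom{n}{q-r}$, and the whole content of the lemma is the correction. But your proposal stops precisely there: ``follow the iterative/greedy template of \cite{GKLO}'' with ``switching pairs with small footprint'' is not an argument, and the coordination problem you name (each swap perturbs $\tbinom{q}{r}$ edge degrees at once) is not resolved by appealing to density and regularity of $K^r_q(G)$. As stated, the proposal has a genuine gap at its central step.

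The missing idea is the \emph{local decoder} (Lemma \ref{lem:decode}, due to Wilson): for any $(q+r)$-set $Z$ containing $e$ there is a signed combination $\Psi^Z \in \mb{Z}^{K^r_q(\binom{Z}{r})}$ with $\pl \Psi^Z = N\cdot 1_e$ and bounded entries, i.e.\ a gadget that shifts the degree of the single designated edge $e$ while leaving every other edge's degree untouched. This completely decouples the corrections across edges, which is what your swapping scheme lacks. The paper also performs the correction in the opposite order to yours: it fixes the \emph{fractional} assignment first, starting from the uniform weights $p'_Q = \tfrac12\tbinom{n}{q-r}^{-1}$ (whose edge sums are $\tfrac12 - c_e$ with $|c_e| \le 3kc$) and adding, for each $e$, the average of $c_e N^{-1}\Psi^Z$ over $\tfrac12\tbinom{n}{q}$ choices of $Z \sub G$ containing $e$. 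This makes every edge sum exactly $1/2$, and the perturbation of each individual $p_Q$ is $O_q(c)\tbinom{n}{q-r}^{-1}$, so the weights stay in $[0,1]$ because $c < 2^{-3q}$. Only then does one sample each clique independently with probability $p_Q$ and apply Chernoff, which is the easy part you already have. Working fractionally avoids the integrality/rounding issues your randomize-then-swap order would create.
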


The fourth step has two parts, implemented by the following two lemmas,
which we call the `nibble' and the `cover'.
Here we depart from the more general but not self-contained formulation in \cite{DP}
and use the formulation from \cite{K}. 
The term `nibble' refers to R\"odl's celebrated semi-random method,
and our statement is quite similar to his original application of this method:
we consider an $r$-graph $G$ with a very regular set $H$ of $q$-cliques
as obtained in Step 3 and find a set $D_1$ of edge-disjoint $q$-cliques
that cover most of the edges of $G$. We require the additional property
of boundedness of the $r$-graph $L_1$ of uncovered edges, 
which is well-known (see the references in \cite{K}),
but does not follow from standard proofs of the nibble, 
so for completeness we will give a short proof in Section \ref{sec:removal}.

\begin{lemma} \label{lem:nibble} (Nibble)
Fix $q,r,k,\rho,n$ as in \eqref{param}.
Suppose $G \sub K^r_n$ and $H \sub K^r_q(G)$ with $|G| > \tfrac12 \tbinom{n}{r}$
and $|H(e)| =  (1 \pm n^{-1/3}) \tfrac12 \tbinom{n}{q-r}$ for all $e \in V(H) = E(G)$.
Then there is a set $D_1 \sub H$ of edge-disjoint $q$-cliques
such that $L_1 :=  G \sm \bigcup D_1$ is $n^{-3k\rho}$-bounded.
\end{lemma}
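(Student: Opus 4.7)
My plan is to run R\"odl's semi-random (``nibble'') method in $T$ rounds for an appropriate small step-size $p$. Setting $G_0 := G$ and $H_0 := H$, each round $i \to i+1$ proceeds as follows: sample each clique $Q \in H_i$ independently with probability $p/(\tfrac12 \tbinom{n}{q-r})$; delete any sampled clique sharing an edge with another sampled clique; add the surviving cliques to $D_1$; let $G_{i+1} \sub G_i$ consist of the edges still uncovered and $H_{i+1} \sub H_i$ of the cliques of $H_i$ whose edges all lie in $G_{i+1}$. The regularity of $H$ and the sampling probability are calibrated so that each edge of $G_i$ is covered with probability $\approx p$ per round: conflict-removal deletes only an $o(1)$ fraction of samples, since pairwise codegrees in $H_i$ are $O(\tbinom{n}{q-2r})$, which is of lower order than $|H_i(e)|$.

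The technical content is to show by induction on $i$, whp in the $e^{-n/10}$ sense of the paper, that the process remains regular: every $(r-1)$-set $f$ satisfies $|G_i(f)| = (1 \pm n^{-\eps})(1-p)^i n$, and every edge $e \in G_i$ satisfies $|H_i(e)| = (\tfrac12 \pm n^{-\eps})(1-p)^{(k-1)i} \tbinom{n}{q-r}$. The per-round step uses Chernoff bounds for the sampled count combined with a bounded-differences (Azuma) argument to absorb the dependencies introduced by conflict-removal; the Lipschitz constants of the functionals being tracked are controlled by the codegree bounds in $H_i$. Iterating until $(1-p)^T \le n^{-3k\rho}$, i.e.\ $T = \Theta(k\rho \log n / p) = O(\log n)$, yields $L_1 := G_T$ with maximum $(r-1)$-degree at most $n^{1-3k\rho}$, so $L_1$ is $n^{-3k\rho}$-bounded as required. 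A union bound over $T$ rounds and over the $\mathrm{poly}(n)$ many codegree and clique-degree events is well within the whp budget specified in \eqref{param}.

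The main obstacle is the simultaneous maintenance of both inductive invariants through all $T$ rounds, since the per-round errors accumulate additively: one needs per-round concentration $n^{-\eps'}$ with $\eps'$ sufficiently larger than $\eps$ so that the cumulative slack after $O(\log n)$ rounds is still $n^{-\eps}$-small. The most delicate of these concentration statements is for $|H_{i+1}(e)|$, which is a function of the joint survival of $\approx |H_i(e)|$ cliques whose pairwise interactions---though weak thanks to the small codegrees---are not strictly independent; the bounded-differences approach applied to the \emph{independent} sampling bits (rather than to the correlated survival indicators) is essential here. The hypothesis $|G| > \tfrac12 \tbinom{n}{r}$ together with the given near-regularity of $H$ supplies all structural input needed, since the method only requires that each edge of $G_i$ still lies in $\Theta(\tbinom{n}{q-r})$ cliques of $H_i$ with small pairwise codegrees.
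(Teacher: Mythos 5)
Your proposal follows the classical iterated R\"odl nibble (independent sampling in rounds, conflict removal, inductive maintenance of regularity), whereas the paper deliberately avoids this route: it runs the \emph{clique removal process} of Bennett--Bohman, choosing one uniformly random edge-disjoint clique at a time and tracking $|H(i)|$, $|H_e(i)|$ and $|G^i(f)|$ along their deterministic trajectories $p^kD|G|/k$, $p^{k-1}D$, $p|G(f)|$ via Freedman's martingale inequality with a self-correcting ``trend hypothesis''. The reason for this choice is exactly the point where your argument has a gap. The target boundedness is $n^{-3k\rho}=n^{-1/(12k)}$, i.e.\ the process must be run until the density of the leave is $n^{-\Theta(1/k)}$, which takes $T=\Theta(\log n /(pk))$ rounds of your nibble. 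Your claim that ``per-round errors accumulate additively'' misidentifies the difficulty: the relative error $\delta_i$ in the invariants is amplified \emph{multiplicatively} each round, since the round-$(i+1)$ expectations depend nonlinearly on the round-$i$ clique-degrees (an edge survives with probability roughly $e^{-p(1\pm\delta_i)}$ and a clique of $H_{i+1}(e)$ survives as a product of $k-1$ such factors), so $\delta_{i+1}\ge(1+\Omega(pk))\delta_i$ plus fresh concentration error. Over $T$ rounds this amplification is $n^{\Omega(k\delta)}$ where $n^{-\delta}$ is the final density, and starting from $\delta_0=n^{-1/3}$ one must verify that $k\delta$ is small enough relative to $1/3$; the paper states that standard nibble analyses only reach boundedness $n^{-O(\eps/k^2)}$, which is strictly weaker than the required $n^{-1/(12k)}$. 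So the quantitative heart of the lemma --- reaching density $n^{-\Theta(1/k)}$ rather than $n^{-\Theta(1/k^2)}$ --- is precisely what your sketch does not establish.

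A secondary issue: your appeal to ``bounded differences (Azuma) applied to the independent sampling bits'' does not work as stated for $|H_{i+1}(e)|$. There are $\Theta(n^q)$ sampling bits and a single bit can change $|H_{i+1}(e)|$ by up to $\Theta(n^{q-r-1})$ (or by $|H_i(e)|$ itself if the flipped clique contains $e$), so the naive Azuma bound $\sqrt{\sum c_j^2}$ vastly exceeds the mean $\Theta(n^{q-r})$ once $q>2$. One needs either a restriction to the relevant bits together with a typical-bounded-differences argument, or (as the paper does) a Freedman-type inequality exploiting that the conditional variances, not just the worst-case increments, are small. Neither repair is automatic, and combined with the error-amplification issue above, the proposal as written does not yield the stated bound.
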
 

Next we state the `cover' lemma, which shows that $r$-graph $L_1$ of uncovered edges
from the nibble step can be covered by a set $D_2$ of  edge-disjoint $q$-cliques,
each of which covers exactly one edge of $L_1$ and has all remaining edges 
in the $r$-graph $R$, which was reserved in Step 1 precisely to provide
many suitable cliques for this step. This is accomplished by a general purpose random greedy algorithm
presented in Section \ref{sec:rga} which will be used in various contexts throughout the paper.
 
\begin{lemma} \label{lem:cover} (Cover)
Fix $q,r,k,\rho,n$ as in \eqref{param}.
Let $R \sub K^r_n$ be $n^{-\rho}$-bounded and such that any edge $e \in K^r_n \sm R$
belongs to at least $n^{-k\rho} n^{q-r}$ copies $Q$ of $K^r_q$ with $Q \sm \{e\} \sub R$.
Suppose $L_1 \sub K^r_n \sm R$ is $n^{-3k\rho}$-bounded.
Then there is a set $D_2 = \{Q_e: e \in L_1\} \sub K^r_q(G \sm A)$ 
of edge-disjoint $q$-cliques with each $e \in Q_e$ and $Q_e \sm \{e\} \sub R$.
\end{lemma}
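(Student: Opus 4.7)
The plan is to cast the task as a matching problem and apply the random greedy matching algorithm of Section~\ref{sec:rga}. Let $\mc{J}$ be the auxiliary $k$-uniform hypergraph with vertex set $V(\mc{J}) := K^r_n$ (so each ``vertex'' of $\mc{J}$ is an $r$-edge of $K^r_n$) and hyperedges all $q$-cliques $Q$ with $|Q \cap L_1| = 1$ and $Q \sm L_1 \sub R$. A family $D_2 = \{Q_e : e \in L_1\}$ as demanded by the lemma is precisely a matching in $\mc{J}$ that saturates $L_1$. For each $e \in L_1$ write $\mc{Q}_e$ for the set of hyperedges of $\mc{J}$ through $e$; the hypothesis of the lemma gives $|\mc{Q}_e| \ge n^{-k\rho} n^{q-r}$.

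I would process $L_1$ in a uniformly random order, and upon reaching $e$ select $Q_e$ uniformly at random from the set $\mc{Q}_e^\star \sub \mc{Q}_e$ of cliques whose $k-1$ other edges (all in $R$) have not been used by earlier choices. A clique $Q \in \mc{Q}_e$ is killed exactly when some $f \in Q \sm \{e\}$ appears in some earlier $Q_{e'}$. For fixed $f \in R$, the number of $Q \in \mc{Q}_e$ through both $e$ and $f$ is $O(n^{q-r-1})$ since $|e \cup f| \ge r+1$; the probability that $f$ is used earlier is at most $\sum_{e'} \mb{P}(Q_{e'} \ni f) \le \sum_{e'} O(n^{q-r-1})/(n^{-k\rho} n^{q-r})$, where the sum is over $e' \in L_1$ with $|e' \cup f| \le q$, and the $n^{-3k\rho}$-boundedness of $L_1$ together with the $n^{-\rho}$-boundedness of $R$ keep the number of relevant pairs $(f,e')$ small. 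The upshot is that the expected number of killed cliques in $\mc{Q}_e$ is a vanishing fraction of $|\mc{Q}_e|$, so the greedy step succeeds in expectation at every stage.

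The main obstacle is to upgrade this first-moment bound to a whp guarantee that $\mc{Q}_e^\star \ne \es$ for \emph{every} $e \in L_1$, since the choices at different steps are heavily coupled through the shared reservoir $R$. This is precisely the role of the general random greedy lemma of Section~\ref{sec:rga}: it invokes the martingale concentration of Section~\ref{sec:conc} applied to the Doob process tracking $|\mc{Q}_e^\star|$ as earlier steps are revealed. The parameters $\rho$ and $3k\rho$ in \eqref{param} are calibrated so that the supply $n^{-k\rho} n^{q-r}$ beats the accumulated loss by a polynomial factor, leaving ample room to absorb the concentration error and to union-bound over all $e \in L_1$; once the degree and codegree bookkeeping in $\mc{J}$ has been verified, the conclusion follows directly from that general lemma.
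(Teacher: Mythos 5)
Your proposal is correct and takes essentially the same route as the paper: the paper proves the Cover lemma by running exactly this random greedy process (one clique $Q_e$ per edge $e\in L_1$, chosen uniformly among those with $Q_e\sm\{e\}\sub R$ that avoid previously used edges) and invoking the general process lemma of Section~\ref{sec:rga} via Remark~\ref{rem:process} with $\omega=n^{-k\rho}$ and $\theta=n^{-3k\rho}$, whose proof is precisely the martingale-concentration bookkeeping you outline. The only cosmetic difference is that the paper guarantees each step succeeds by a deterministic count of forbidden extensions given boundedness of the accumulated used-edge graphs (applying concentration to that boundedness via a stopping time), rather than by a per-edge first-moment bound on killed cliques.
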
 

We conclude this section by proving the main theorem assuming the above lemmas.

\begin{proof}[Proof of Theorem \ref{thm:steiner}]
Let $q,r,k,\rho,\aA,n$ be as in \eqref{param} where $K^r_n$ is $K^r_q$-divisible.

Apply Lemma \ref{lem:R} to obtain some $n^{-\rho}$-bounded `reserve' $R \sub K^r_n$ such that any edge $e \in K^r_n \sm R$
belongs to at least $n^{-k\rho} n^{q-r}$ copies $Q$ of $K^r_q$ with $Q \sm \{e\} \sub R$.

Apply Lemma \ref{lem:A} to find some $n^{-\aA/4}$-bounded $K^r_q$-divisible `absorber' $A \sub K^r_n \sm R$ 
such that $A \cup L$ has a $K^r_q$-decomposition for any $K^r_q$-divisible $L \sub R$.

Apply Lemma \ref{lem:reg} with $G = K^r_n \sm (A \cup R)$, using $n^{-\aA/4} + n^{-\rho} < 2^{-3q}$, to obtain
$H \sub K^r_q(G)$ with $|H(e)| = (1 \pm n^{-1/3}) \tfrac12 \tbinom{n}{q-r}$ for all $e \in V(H) = E(G)$.

Apply Lemma \ref{lem:nibble} to find a set $D_1 \sub H$ of edge-disjoint $q$-cliques
such that $L_1 :=  G \sm \bigcup D_1$ is $n^{-3k\rho}$-bounded.

Apply Lemma \ref{lem:cover} to find a set $D_2 = \{Q_e: e \in L_1\} \sub K^r_q(G \sm A)$ 
of edge-disjoint $q$-cliques with each $e \in Q_e$ and $Q_e \sm \{e\} \sub R$.

Let $L = (K^r_n \sm A) \sm \bigcup D$, where $D = D_1 \cup D_2$.
Then $L \sub R$ is $K^r_q$-divisible, 
as $K^r_q$-divisibility is closed under integer linear combinations 
and holds for $K^r_n$, $ \bigcup D$ and $A$.

Now $A \cup L$ has a $K^r_q$-decomposition $D_L$ by choice of $A$,
so $D \cup D_L$ is a $K^r_q$-decomposition of $K^r_n$. 
\end{proof}

\section{Preparing to absorb} \label{sec:prep}

Having reduced the theorem to the lemmas,
the main remaining hurdle for the proof will be proving
the absorber lemma required for Step 2 above.
The construction of the absorber will be presented in the next section,
following some preparations in this section. 
In the first subsection we introduce some notation and motivate the construction
by giving an informal description of the objects that we will require.
The second subsection introduces the clique exchange tool
by stating a lemma on its existence and properties
and describing its later applications in the paper.
We prove this lemma in the final subsection.

\subsection{Motivation} \label{sub:motiv}

Given a set $\mc{Q} \sub K^r_q(K^r_n)$ of $q$-cliques in $K^r_n$,
we write $\pl \mc{Q}$ for the $r$-multigraph $M \in \mb{N}^{K^r_n}$ 
where each $e \in K^r_n$ has multiplicity $M_e = |\{Q \in \mc{Q}: e \in Q\}|$.

More generally, for $\Phi \in \mb{Z}^{K^r_q(K^r_n)}$ we define $\pl \Phi \in \mb{Z}^{K^r_n}$
by $(\pl \Phi)_e = \sum \{ \Phi_Q: e \in Q \in K^r_q(K^r_n) \}$.
We think of such $\Phi$ as a characteristic vector of an integral decomposition,
containing $\Phi_Q$ signed copies of each clique $Q$.

The first step in the construction in subsection \ref{sub:construct}
will be to find $\mc{Q}_1 \sub K^r_q(K^r_n)$ 
such that $\pl \mc{Q}_1$ is suitably bounded and $\bigcup \mc{Q}_1$ 
is an `integral absorber' for $R$, meaning that for any
$K^r_q$-divisible $J \in \mb{Z}^{K^r_n}$ supported in $R$
there is $\Phi \in \mb{Z}^{\mc{Q}_1}$ with $\pl \Phi = J$.
In particular, we have such $\Phi$ for $K^r_q$-divisible $J = L \sub R$.

We will then use the clique exchanges defined below
to augment $\mc{Q}_1$ to a larger collection $\mc{Q} = \mc{Q}^+ \cup \mc{Q}^-$,
which can be used to convert the integral decomposition $\Phi$ of $L$
into a signed decomposition $L = \pl D^+ - \pl D^-$ with $D^\pm \sub \mc{Q}^\pm$.
Furthermore, $\mc{Q}^\pm$ will be suitably bounded
and the cliques in $\mc{Q}^-$ will be edge-disjoint from $R$ and each other.

The absorber will be defined as $A = \bigcup \mc{Q}^-$. 
Then for any $K^r_q$-divisible $L \sub R$, to decompose $A \cup L$ 
we will solve $L = \pl D^+ - \pl D^-$ as indicated above
and note that $D^+ \cup \mc{Q}^- \sm D^-$ decomposes $A \cup L$.

\subsection{Clique exchange} \label{sub:ex}

Our basic building block for various clique exchange operations throughout the paper
consists of an $r$-graph $\OO$ with two $K^r_q$ decompositions $\Ups^+$ and $\Ups^-$.
We have a designated clique $\hQ^+ \in \Ups^+$ such that the cliques $\hQ^e \in \Ups^-$
that share an edge with $\hQ^+$ are `maximally disjoint', in the sense of the following lemma.

\begin{lemma} \label{lem:OO}
There is an $r$-graph $\OO$ with $|\OO| \le 3(2q)^r \tbinom{q}{r}^2$ 
and two $K^r_q$ decompositions $\Ups^+$ and $\Ups^-$ of $\OO$ such that 
for some  $\hQ^+ \in \Ups^+$ and $\{ \hQ^e : e \in \hQ^+ \} \sub \Ups^-$,
writing $F := V(\hQ^+) \cup \bigcup_e V(\hQ^e)$, we have 
\begin{enumerate}
\item each $\hQ^e \cap \hQ^+ = \{e\}$ and
$\{ V(\hQ^e) \sm V(\hQ^+) : e \in \hQ^+\}$ are pairwise disjoint,
\item for any edge $e' \in \OO$ we have $e' \cap F \sub V(\hQ^+)$
or $e' \cap F \sub V(\hQ^e)$ for some $e \in \hQ^+$.
\end{enumerate}
\end{lemma}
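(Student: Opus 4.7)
My plan is to construct $\OO$ explicitly as the edge-union of $\hQ^+$, the cliques $\hQ^e$, and a family of auxiliary $q$-cliques clustered locally around each $\hQ^e$, with the auxiliary cliques carrying the exchange between $\hQ^+$ and $\sum_e \hQ^e$. First, fix $V^+ = [q]$ as the vertex set of $\hQ^+$, and for each edge $e \in \hQ^+$ introduce a disjoint fresh set $N_e$ of $q-r$ vertices (pairwise disjoint across $e$, disjoint from $V^+$), setting $\hQ^e$ to be the $q$-clique on $e \cup N_e$. Property (i) is then automatic: $\hQ^e \cap \hQ^+$ consists of all $r$-subsets of $V(\hQ^+) \cap V(\hQ^e) = e$, giving $\{e\}$, while $V(\hQ^e) \setminus V(\hQ^+) = N_e$ are pairwise disjoint. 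Setting $F = V(\hQ^+) \cup \bigcup_e V(\hQ^e)$, the remaining task is to extend $\OO$ beyond $\hQ^+ \cup \bigcup_e \hQ^e$ so that $\OO$ admits two $K^r_q$-decompositions, one of which is $\Ups^+ \ni \hQ^+$ and the other $\Ups^- \supseteq \{\hQ^e : e \in \hQ^+\}$.

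For each $e$ I would introduce an additional fresh set $Z_e$ of vertices (disjoint from $F$ and from other $Z_{e'}$), of size at most $q$, and attach a bounded number of auxiliary $q$-cliques supported on $V(\hQ^e) \cup Z_e$. The natural building block for the $\Ups^+$ side is, for each $x \in e$, the $q$-clique obtained by replacing $x$ with a fresh vertex $z_{e,x} \in Z_e$, namely $P^+_{e,x} := (V(\hQ^e) \setminus \{x\}) \cup \{z_{e,x}\}$. Its $F$-intersection is $V(\hQ^e) \setminus \{x\} \subseteq V(\hQ^e)$, so every edge of $P^+_{e,x}$ satisfies condition (ii); it does not contain the edge $e$ (which has $x$), so placing it in $\Ups^+$ is compatible with $\hQ^+$ already being there; and restricted to $V(\hQ^e)$ it picks up exactly the $\binom{q-1}{r}$ petals of $\hQ^e$ that avoid $x$. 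A suitable combination of such cliques (supplemented by small corrections on $Z_e$) then covers each petal edge of $\hQ^e \setminus \{e\}$ exactly once, while the remaining crossing edges between $V(\hQ^e)$ and $Z_e$, together with edges inside $Z_e$, form a local $r$-graph that admits a second $K^r_q$-decomposition supplying the rest of $\Ups^-$.

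The main obstacle is twofold. First, the naive choice of $P^+_{e,x}$'s multi-covers a petal edge $f$ with $|f \cap e| = s$ by $r - s$ times, so one must carefully restrict or augment the collection. Second, a local divisibility obstruction arises because $|\hQ^e \setminus \{e\}| = \binom{q}{r} - 1$ is not divisible by $\binom{q}{r}$, so a purely local exchange cannot be decomposed unless extra cliques are placed on $Z_e$ to absorb the single-edge deficit---for instance a single $K^r_q$ on $Z_e$, which requires $|Z_e| \ge q$. Once these combinatorial choices are made, condition (ii) is immediate since every clique in the construction either equals $\hQ^+$, some $\hQ^e$, or has vertex set in $V(\hQ^e) \cup Z_e$ for a unique $e$, and hence intersects $F$ inside $V(\hQ^e)$. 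The bound $|\OO| \le 3(2q)^r \binom{q}{r}^2$ follows by noting that there are at most $O(\binom{q}{r})$ cliques per $e$ (each with $\binom{q}{r}$ edges), summed over $\binom{q}{r}$ edges of $\hQ^+$.
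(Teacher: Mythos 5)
There is a genuine gap, and in fact the proposed architecture cannot work as described. Your locality requirement --- every clique other than $\hQ^+$ has vertex set inside $V(\hQ^e)\cup Z_e$ for a unique $e$, with the sets $N_e\cup Z_e$ pairwise disjoint and disjoint from $V(\hQ^+)$ --- forces the edge set of $\OO$ to split as $\hQ^+$ together with edge-disjoint local gadgets $H_e$ (one per $e\in\hQ^+$, with $H_e\cap\hQ^+=\{e\}$), and forces every clique of $\Ups^+\sm\{\hQ^+\}$ and of $\Ups^-$ to lie entirely inside a single gadget. Consequently $\Ups^-$ restricts to a $K^r_q$-decomposition of $H_e$ (containing $\hQ^e$), while $\Ups^+\sm\{\hQ^+\}$ restricts to a $K^r_q$-decomposition of $H_e\sm\{e\}$, since the edge $e$ itself is covered by $\hQ^+$ in $\Ups^+$. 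But then $\tbinom{q}{r}$ divides both $|H_e|$ and $|H_e|-1$, which is impossible for $q>r$. This is exactly the ``single-edge deficit'' you flag, but your proposed remedy --- adding an extra $K^r_q$ on $Z_e$ --- cannot help: any clique added to the gadget must be covered by \emph{both} decompositions, so it increases both counts by $\tbinom{q}{r}$ and leaves the discrepancy of $1$ intact. No purely local repair is possible; the deficit has to be transported out of the gadget by cliques that are shared (in the combinatorial sense of the two decompositions being globally intertwined), not absorbed on fresh vertices. The unresolved multi-covering of petals by the $P^+_{e,x}$ is a second, independent gap, but the divisibility obstruction already kills the plan.

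The paper sidesteps this by building $\OO$ from copies of the blowup $K^r_q(p)$ over $\mb{F}_p$, each of which carries two \emph{parallel} $K^r_q$-decompositions (two cosets of a Vandermonde code); two cliques, one from each decomposition, meet in exactly one edge, and the copies are chained together by identifying a clique of one decomposition of one copy with a clique of the other decomposition of the next. In that construction the two decompositions of each block have equal size by symmetry, and the gluing transfers the ``$\pm$ imbalance'' along the chain rather than trying to cancel it at a single edge. If you want to salvage a direct construction, you must allow the $\Ups^+$-cliques covering the petals of $\hQ^e$ and the $\Ups^-$-cliques covering the auxiliary edges to be linked across a larger structure in which both decompositions have the same cardinality on every connected piece; some such global coupling is unavoidable.
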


The $r$-graph $\OO$ in Lemma \ref{lem:OO} will have three different applications in this paper.
Some applications will only use some of the above properties of $\OO$, but for simplicity of exposition
we sacrifice some efficiency by using the same construction $\OO$ for all applications.
The first application will be in Step 3 (Splitting) of the construction in subsection \ref{sub:construct}.
Here we will modify some integral decomposition $\Phi \in \mb{Z}^{K^r_q(K^r_n)}$
without changing the value of $\pl \Phi$ so as to eliminate a copy of some clique $Q$.
To do so, we fix any copy $\phi(\OO)$ of $\OO$ with $\phi(\hQ^+)=Q$
and define $\Phi' \in \mb{Z}^{K^r_q(K^r_n)}$ 
by $\Phi' = \Phi + \phi(\Ups^-) - \phi(\Ups^+)$,
meaning that each $\Phi'_{Q'}$ is $\Phi_{Q'} + 1$ for $Q' \in \phi(\Ups^-)$,
or is $\Phi_{Q'} - 1$ for $Q' \in \phi(\Ups^+)$, or is  $\Phi_{Q'}$ otherwise.
Then $\pl \Phi' = \pl \Phi$, as $\phi(\Ups^+)$ and $\phi(\Ups^-)$
cover the same set of edges, so their contributions cancel.
Furthermore, as $\hQ^+ \in \Ups^+$ we have $\Phi'_Q = \Phi_Q - 1$;
if $\Phi_Q>0$ we interpret this as removing a copy of $Q$ from $\Phi$ 
and replacing it by an equivalent set of signed cliques. 
Similarly, if $\Phi_Q<0$ then we consider $\Phi' = \Phi - \phi(\Ups^-) + \phi(\Ups^+)$,
which we interpret as removing a negative copy of $Q$ from $\Phi$
and replacing it by an equivalent set of signed cliques. 
For this application, the only important property in Lemma \ref{lem:OO}
is the first part of (i), which implies that 
\begin{enumerate}
\item the cliques replacing $Q$
in the above operation all intersect $Q$ in at most $r$ vertices.
\end{enumerate}

For the second application, we rename $\hQ^{e_0}$ as $\hQ^-$
and will not use the properties of the other $\hQ^e$ for $e \ne e_0$;
these will only be needed once, in our third application, the proof of Lemma  \ref{lem:extcol}.4.
Thus we focus on two cliques $\hQ^\pm \in \Ups^\pm$ with $\hQ^+ \cap \hQ^- = \{e_0\}$,
such that every edge of $\OO$ contained in $V(\hQ^+) \cup V(\hQ^-)$ is contained in $\hQ^+$ or $\hQ^-$
(the latter property is needed for `admissibility' in the sense of Definition \ref{def:process} below).
We use $\hQ^\pm$ in Step 4 (Elimination) of the construction in subsection \ref{sub:construct}
to modify some integral decomposition $\Phi \in \mb{Z}^{K^r_q(K^r_n)}$ without changing 
the value of $\pl \Phi$ so as to eliminate a `cancelling pair' $Q^+ - Q^-$,
consisting of two cliques of opposite sign in $\Phi$ that intersect exactly in some edge $e$.
To do so, we fix any copy $\phi(\OO)$ of $\OO$ with $\phi(\hQ^\pm)=Q^\pm$
and replace $\Phi$ by  $\Phi' = \Phi + \phi(\Ups^-) - \phi(\Ups^+)$.
Then $\pl \Phi' = \pl \Phi$ is unchanged, $\Phi'_{Q^+} = \Phi_{Q^+} - 1$ and $\Phi'_{Q^-} = \Phi_{Q^-} + 1$.
If $\Phi_{Q^+}>0$ and $\Phi_{Q^-}<0$ then the interpretation is that we have removed
a positive copy of $Q^+$ and a negative copy of $Q^-$ and replaced them by an equivalent set of signed cliques. 
Two crucial properties of this construction are that
\begin{enumerate}
\item the new signed cliques do not use the common edge $e$ of $Q^+$ and $Q^-$, and
\item the new negative cliques intersect $Q^-$ in at most one edge
and are edge-disjoint from each other and $Q^+$.
\end{enumerate}

\subsection{Construction}

We conclude the section with the construction of $\OO$.

\begin{proof}[Proof of Lemma \ref{lem:OO}]
We will construct $\OO$ by gluing together several copies of $\OO_0 := K^r_q(p)$,
which is our notation for the $p$-blowup of $K^r_q$, obtained from $K^r_q$ 
by replacing each vertex $i \in [q]$ by a set $X_i$ of $p$ vertices,
and each edge $e$ by all $p^r$ edges obtained by replacing 
each vertex $i$ of $e$ by some $x \in X_i$.
We choose $p$ to be a prime in $[q,2q]$, which exists by Bertrand's postulate,
and identify each $X_v$ with the finite field $\mb{F}_p$.
We note that $|\OO_0| = p^r \tbinom{q}{r} \le (2q)^r \tbinom{q}{r}$.

Next we describe two $K^r_q$-decompositions of $\OO_0$, denoted $\Ups^\pm_0$.
We will identify each $q$-clique $Q$ with a vector $v \in \mb{F}_p^q$,
where $v_i \in X_i = \mb{F}_p$ is the vertex of $Q$ in $X_i$.
We let $M \in \mb{F}_p^{q \times r}$ be a Vandermonde matrix,
where row $i$ of $M$ is $(1,y_i,y_i^2,\dots,y_i^{r-1})$
for some $y_i \in \mb{F}_p$, and the rows are distinct
(which is possible as $p \ge q$). We let $\Ups^+_0$ consist
of all $q$-cliques corresponding to vectors 
$v=Mu \in \mb{F}_p^q$ for some $u \in \mb{F}_p^q$.
We let $\Ups^-_0$ consist of all $q$-cliques corresponding to vectors 
$v=v^0+Mu \in \mb{F}_p^q$ for some $u \in \mb{F}_p^q$,
where $v^0_i = 0$ for $i \in [r]$ and $v^0_i=1$ for $i \in [q] \sm [r]$.

To see that $\Ups^+_0$ is a clique decomposition of $\OO_0$,
we need to show that for each edge $e \in \OO_0$
there is a unique clique in $\Ups^+_0$ that contains $e$.
Let $I \in \tbinom{[q]}{r}$ index the parts of $\OO_0$
that $e$ intersects and write $e = \{x_i: i \in I\}$.
Consider any clique $v=Mu$ in $\Ups^+_0$ containing $e$,
meaning that $v_i = x_i$ for all $i \in I$.
Let $M_I$ be the square submatrix of $M$ corresponding to the rows of $I$.
Then $M_I$ is non-singular, as up to sign $\det M_I$ is the product
of all differences $y_i-y_{i'}$ for $\{i,i'\} \sub I$,
so $u$ is uniquely determined by $u=M_I^{-1}e$, 
regarding $e$ as a vector in $\mb{F}_p^I$.
Similarly, $\Ups^-_0$ is a clique decomposition of $\OO_0$,
as for $e$ as above the clique $v=v^0+Mu$ containing $e$
is uniquely determined by $u=M_I^{-1}(e-v^0_I)$,
where $v^0_I \in \mb{F}_p^I$ is the restriction of $v^0$ to coordinates $I$.

We let $\hQ^\pm_0 \in \Ups^\pm_0$ denote the cliques containing
the edge $e_0$ that contains element $0$ in parts $X_1,\dots,X_r$,
so $\hQ^+_0$ corresponds to the zero vector in $ \mb{F}_p^q$
and $\hQ^-_0$ corresponds to $v^0$. We note that $\hQ^+_0 \cap \hQ^-_0 = \{e_0\}$.

Next we define a gluing procedure for combining two $r$-graphs $\OO_1,\OO_2$
each having two  $K^r_q$-decompositions $\Ups^\pm_i$
and designated $q$-cliques $\hQ^-_1 \in \Ups^-_1$,  $\hQ^+_2 \in \Ups^+_2$.
A glued $r$-graph $\OO_3$ is obtained from disjoint copies of $\OO_1$ and $\OO_2$ 
by identifying $\hQ^-_1$ and $\hQ^+_2$ according to some specified bijection.
We obtain two  $K^r_q$-decompositions $\Ups^\pm$ of $\OO_3$ 
by setting  $\Ups^+ = \Ups^+_1 \cup \Ups^+_2  \sm \{\hQ^+_2\}$
and $\Ups^- = \Ups^-_2 \cup \Ups^-_1  \sm \{\hQ^-_1\}$.
To see that  $\Ups^+$ decomposes $\OO_3$, note that  $\Ups^+_1$ decomposes $\OO_1$ 
and $\Ups^+_2  \sm \{\hQ^+_2\}$ decomposes $\OO_3 \sm \OO_1 = \OO_2 \sm \hQ^+_2$;
similarly $\Ups^-$ decomposes $\OO_3$.  As signed decompositions,
we have $\Ups^+ - \Ups^- = (\Ups^+_1 - \Ups^-_1) + (\Ups^+_2 - \Ups^-_2)$,
where $\hQ^-_1 = \hQ^+_2$ appears in $\Ups^-_1$ and $\Ups^+_2$, so not in $\Ups^\pm$.

We define the $r$-graph $\OO$ and decompositions $\Ups^\pm$ required for the lemma
by gluing $2\tbinom{q}{r}$ copies of $\OO_0$ as follows. We start with $\OO' = \OO^0_0$ being
a copy of $\OO_0$ and proceed in $\tbinom{q}{r}$ rounds, in each of which we update $\OO'$ by gluing
two copies of $\OO_0$, then we let $\OO$ be the final $r$-graph $\OO'$. 
The rounds correspond to edges $e$ 
of the designated clique $\hQ^{0+}_0 \in \Ups^{0+}_0$.
Given the current $r$-graph $\OO'$, with some $K^r_q$-decompositions $\Ups'{}^\pm$,
and some edge $e$, we let $\hQ'{}^-(e)$ denote the unique clique in $\Ups'{}^-$ containing $e$.
We define the $e$-gluing operation on $\OO'$ by gluing on a copy of $\OO_0$ according to some
bijection of $\hQ'{}^-(e)$ with $\hQ^+_0$ identifying $e \in  \hQ'{}^-(e)$ with $e_0 \in \hQ^+_0$.
In each round $e$ we perform two $e$-gluing operations. 
This completes the definition of $\OO$ and $\Ups^\pm$;
it remains to check that they have the required properties.

Fix some $e \in \hQ^{0+}_0$ and let $(\OO_i,\Ups^\pm_i)$ for $i=1,2,3$ denote $(\OO',\Ups'{}^\pm)$ 
at the start of round $e$, then after one $e$-gluing, then after the second $e$-gluing.
We let $\hQ^\pm_i(e)$ denote the clique of $\Ups^\pm_i$ containing $e$.
Each $e$-gluing removes the current clique $\hQ'{}^-(e)$ of $\Ups'{}^-$ containing $e$ 
and replaces it by the clique  $\hQ^-_0$ in the glued copy of $\Ups^-_0$ containing $e_0$, 
which by construction of $\Ups^\pm_0$ intersects $\hQ^+_0 = \hQ'{}^-(e)$ precisely in $e_0=e$.
Thus $\OO_2$ is obtained by identifying $\OO_1$ and a copy $\OO^1_0$ of $\OO_0$
so that $\hQ^-_1(e)=\hQ^{1+}_0$ and $\hQ^-_2(e) = \hQ^{1-}_0$,
so $V(\OO^1_0) \cap V(\OO_1) = V(\hQ^-_1(e))$
and $V(\hQ^-_2(e)) \cap V(\OO_1) = e$.
Then  $\OO_3$ is obtained by identifying $\OO_2$ and a copy $\OO^2_0$ of $\OO_0$
so that $\hQ^-_2(e)=\hQ^{2+}_0$ and $\hQ^-_3(e) = \hQ^{2-}_0$,
so $V(\OO^2_0) \cap V(\OO_2) = V(\hQ^-_2(e))$
and $V(\hQ^-_3(e)) \cap V(\OO_2) = e$,
giving $V(\OO^2_0) \cap V(\OO_1) = V(\OO^2_0) \cap V(\hQ^-_2(e)) \cap V(\OO_1) = e$.
In words, after two $e$-gluings, the new clique $\hQ'{}^-(e)=\hQ^-_3(e)$ of $\Ups'{}^-$ containing $e$ at the end of round $e$
belongs to a copy $\OO^2_0$ of $\OO_0$ that intersects the old $\OO'$ from the start of round $e$
precisely in $e$ and is otherwise vertex-disjoint. This property will be preserved by all subsequent modifications
of $\OO'$, as these do not change $\hQ'{}^-(e)$ and all copies of $\OO_0$ to be glued later
will be vertex-disjoint from $V(\OO^2_0) \sm e$.
 
The designated cliques in $\Ups^\pm$ required in the statement of the lemma 
are $\hQ^+ := \hQ^{0+}_0 \in \Ups^+$ 
(which was present in $\Ups^{0+}_0$ and has never been replaced)
and for each $e \in \hQ^+$ the clique $\hQ^e := \hQ^-(e) \in \Ups^-$ containing $e$.
Then each $\hQ^e \cap \hQ^+ = \{e\}$ and
$\{ V(\hQ^e) \sm V(\hQ^+) : e \in \hQ^+\}$ are pairwise disjoint.
Also, writing  $F := V(\hQ) \cup \bigcup_e V(\hQ^e)$, 
for any edge $e' \in \OO$ we have $e' \cap F \sub V(\hQ^+)$
or $e' \cap F \sub V(\hQ^e)$ for some $e \in \hQ^+$.
Indeed, if $e' \cap F \not\sub V(\hQ^+)$ then $e' \cap (V(\hQ^e) \sm e) \ne \es$
for some $e \in V(\hQ^+)$, which implies that $e'$ is in the second copy $\OO^2_0$
of $\OO_0$ in round $e$, and so $e' \cap F \sub V(\OO^2_0) \cap F = V(\hQ^e)$, as required.
\end{proof}

\section{The absorber} \label{sec:abs}

In the first subsection of this section we describe the construction of the absorber,
assuming the existence of the integral absorber $\mc{Q}_1$, which will be proved in Lemma \ref{lem:Aint},
and that certain random greedy algorithms are successful with high probability (whp), 
which will be proved in Section \ref{sec:rga}.
In the second subsection, given these assumptions,
we prove the absorber lemma (Lemma \ref{lem:A}).
This will complete the proof of the main theorem modulo previously known results
(which we will prove later for the sake of completeness).

\subsection{The construction} \label{sub:construct}

Let $q,r,\rho,\aA,n$ be as in \eqref{param}
and suppose $R \sub K^r_n$ is $n^{-\rho}$-bounded.

1. (Integral absorber)
Use Lemma \ref{lem:Aint}
to find $\mc{Q}_1 \sub K^r_q(K^r_n)$ such that 
$\pl \mc{Q}_1$ is $n^{-\aA/2}$-bounded, 
every edge of $K^r_n$ is in at most two cliques of $\mc{Q}_1$,
and for any $K^r_q$-divisible $J \in \mb{Z}^{K^r_n}$ supported in $R$
there is $\Phi \in \mb{Z}^{\mc{Q}_1}$ with $\pl \Phi = J$.

2. (Local decoders)
Apply a random greedy algorithm to choose $(q+r)$-sets $Z_e$ containing $e$ 
for each $e \in \bigcup \mc{Q}_1$, so that each $r$-graph $\tbinom{Z_e}{r} \sm \{e\}$
is disjoint from all others and from $\bigcup \mc{Q}_1$.
Let $\mc{Q}_2 = \bigcup_{e \in \bigcup \mc{Q}_1} K^r_q(\tbinom{Z_e}{r})$.

3. (Splitting)
Let $Q_1,\dots,Q_t$ be a sequence in $K^r_q(K^r_n)$ consisting of
$2^{q+1} r!$ copies of each $q$-clique in $\mc{Q}_1 \cup \mc{Q}_2$,
where $2^q r!$ are labelled $+$ and $2^q r!$ are labelled $-$.
Apply a random greedy algorithm to choose copies $\OO_i = \phi_i(\OO)$ of $\OO$ 
with $\phi_i(\hQ^+) = Q_i$, where each $\OO_i \sm Q_i$ is edge-disjoint from all others 
and from $A_0 :=  \bigcup(\mc{Q}_1 \cup \mc{Q}_2)$,
and $V(\OO_i) \sm V(Q_i)$ is disjoint from $V(\OO_j) \sm V(Q_j)$
whenever $Q_i$ and $Q_j$ share an edge.

4. (Elimination) 
For each $i \in [t]$ and $Q' \in \Ups^- \cup \Ups^+ \sm \{\hQ^+\}$ we call $\phi_i(Q')$ a splitting clique,
with the same sign as $Q_i$ if $Q' \in \Ups^-$ or the opposite sign if $Q' \in \Ups^+$;
if $Q'$ shares an edge with $\hQ^+$ we call $\phi_i(Q')$ near, otherwise it is far.
Let $(Q^-_1,Q^+_1),\dots, (Q^-_{t'},Q^+_{t'})$ be a sequence consisting of all pairs
of oppositely signed near cliques with a common edge.
Apply a random greedy algorithm to choose copies $\OO'_i=\phi'_i(\OO)$
with $\phi'_i(\hQ^\pm)=Q^\pm_i$ where each $\OO'_i \sm (Q^-_i \cup Q^+_i)$ 
is edge-disjoint from all others and from all previously chosen cliques.

5. (Further Elimination)
For each $i \in [t']$ and $Q' \in \Ups^\pm \sm \{\hQ^-,\hQ^+\}$
we call $\phi'_i(Q')$ an elimination clique with the opposite sign to $Q'$.
We call a negative elimination clique bad (for $e$) if it shares some edge $e$ with some negative near clique.
Let $(Q'{}^-_{\!\! 1},Q'{}^+_{\!\! 1}),\dots, (Q'{}^-_{\!\! t''},Q'{}^+_{\!\! t''})$ be a sequence consisting of all pairs
where each $Q'{}^-_{\!\! i}$ is a negative elimination clique that is bad for some $e_i$
and $Q'{}^+_{\!\! i}$ is the positive splitting clique containing $e_i$ (which is unique and far).
Apply a random greedy algorithm to choose copies $\OO''_i=\phi''_i(\OO)$
with $\phi''_i(\hQ^\pm)=Q'{}^\pm_{\!\! i}$ where each $\OO''_i \sm (Q'{}^-_{\!\! i} \cup Q'{}^+_{\!\! i})$ 
is edge-disjoint from all others and from all previously chosen cliques.

6. (Conclusion)
For each $Q'{}^-_{\!\! i}$ as in Step 5 and each $Q' \in \Ups^\pm \sm \{ \hQ^-, \hQ^+\}$ 
we call $\phi''_i(Q')$ a further elimination clique of the opposite sign to $Q'$.
We define $\mc{Q} = \mc{Q}^+ \cup \mc{Q}^-$,
where $\mc{Q}^+$ contains all positive splitting cliques,
positive elimination cliques and positive further elimination cliques,
and $\mc{Q}^-$ contains all negative far splitting cliques, 
negative good elimination cliques and negative further elimination cliques.
We define $A := \bigcup \mc{Q}^-$.

We note the following properties of the construction. 

\begin{enumerate}
\item
The cliques in $\mc{Q}_1$ or $\mc{Q}_2$ and the negative near cliques 
are not used in the final set $\mc{Q}$.
\item
After Step 3, the negative far splitting cliques are all edge-disjoint,
and for any edge $e$ covered say $x$ times by $\mc{Q}_1 \cup \mc{Q}_2$ 
we have $2^q r! x$ negative near cliques that intersect pairwise precisely in $e$ 
and do not share any edge with any other negative splitting cliques.
This uses the important property of splitting  noted in subsection \ref{sub:ex},
which also shows that for any edge $e \notin A_0 := \bigcup (\mc{Q}_1 \cup \mc{Q}_2)$
covered by a negative near clique, all positive splitting cliques containing $e$ are far.
\item
After Step 4, the good negative elimination cliques and negative far splitting cliques 
are all edge-disjoint from each other and from $A_0 := \bigcup (\mc{Q}_1 \cup \mc{Q}_2)$, 
by the two crucial properties of elimination noted in subsection \ref{sub:ex}.
Indeed, the first property gives disjointness from $A_0$, and the second property shows that
we have defined good/bad correctly, in that the only source of edge multiplicity in negative
elimination cliques comes from multiply used edges in some negative near clique. 
Furthermore, for each edge $e \notin A_0$ covered by a negative near clique for some $e' \in A_0$
that is in say $x$ positive near cliques we have $x$ bad negative elimination cliques
that intersect precisely in $e$ and do not share any edge with any other negative
elimination cliques or any negative far splitting cliques. 
\item 
In Step 5, when we consider some negative elimination clique $Q'{}^-_{\!\! i}$ that is bad for some $e_i$,
we have $e_i \notin A_0$ by (iii) so by (ii) the positive splitting clique $Q'{}^+_{\!\! i}$ 
containing $e_i$ is indeed unique and far. Also, as in (iii), the two crucial properties 
of elimination show that the negative further elimination cliques are edge-disjoint,
as they do not use the bad $e_i$ and the only previous cliques with which they share edges
are the bad negative elimination cliques $Q'{}^-_{\!\! i}$, which by (iii) become edge-disjoint
if we exclude bad edges (their intersections with the negative near cliques),
and each other edge in  $Q'{}^-_{\!\! i}$ is covered by a unique negative further elimination clique.
The algorithm also chooses the negative further elimination cliques to not share any edges with
previous cliques apart from the above intersections with bad negative elimination cliques,
so they are edge-disjoint from the negative far splitting cliques and the negative good elimination cliques.
Thus $A := \bigcup \mc{Q}^-$ as in Step 6 is a union of edge-disjoint cliques,
and is disjoint from $R \sub \bigcup \mc{Q}_1$.
\end{enumerate}

\subsection{Using the absorber}

In this subsection we prove Lemma \ref{lem:A},
assuming that the construction described in subsection \ref{sub:construct} is possible, 
as will be proved later in the paper.
We require the following `local decodability' lemma.


\begin{lemma} \label{lem:decode}
Fix $e \in K^r_{r+q}$. There is $\Psi \in \mb{Z}^{K^r_q(K^r_{r+q})}$ such that
$\pl \Psi = N \cdot 1_e$ with $N:=r!k$ and each $|\Psi_Q| \le 2^q r!$.
\end{lemma}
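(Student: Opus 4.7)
Label the vertex set of $K^r_{r+q}$ as $e \cup U$ with $|e|=r$ and $|U|=q$, and seek a fully symmetric $\Psi$: set $\Psi_Q := a_{|Q\cap e|}$ for integers $a_0,\dots,a_r$ to be determined. For each edge $e'$ with $|e'\cap e| = j$, a direct count of $q$-cliques $Q \supseteq e'$ with $|Q\cap e| = i$ (choose $i-j$ more vertices of $Q\cap e$ from $e\setminus e'$, and $q-r+j-i$ more vertices of $Q\cap U$ from $U\setminus(e'\setminus e)$) gives
\[
(\pl\Psi)_{e'} \;=\; f(j) \;:=\; \sum_{i=j}^{r} a_i \binom{r-j}{i-j}\binom{q-r+j}{i}.
\]

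The boundary condition $\pl\Psi = N\cdot 1_e$ is equivalent to $f(r)=N$ and $f(j)=0$ for every $j<r$. Since the coefficient of $a_j$ in $f(j)$ is the strictly positive integer $\binom{q-r+j}{j}$, this is an upper-triangular linear system solvable uniquely with integer entries: $f(r) = a_r k$ gives $a_r = r!$, and each subsequent equation $f(j) = 0$ (for $j = r-1, r-2, \ldots, 0$) recursively expresses $a_j$ as an integer combination of $a_{j+1}, \ldots, a_r$. One can in fact write the solution in closed form as
\[
a_{r-m} \;=\; (-1)^m\, m!\,(r-m)!\,\binom{q-r+m-1}{m}\qquad (m = 0, 1, \ldots, r),
\]
verified by a short signed-binomial identity (e.g.\ via Vandermonde convolution after factoring $\binom{q-r+j}{j}$ out of $f(j)=0$); I have spot-checked it at $j = r-1$ and $j = r-2$, where the cancellation is transparent.

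For the coefficient bound, $m!(r-m)! = r!/\binom{r}{m} \leq r!$ and $\binom{q-r+m-1}{m} \leq 2^{q-r+m-1} \leq 2^{q-1}$ (using $m \leq r$), so
\[
|\Psi_Q| \;=\; |a_{|Q\cap e|}| \;\leq\; 2^{q-1}\,r! \;<\; 2^q\,r!,
\]
as required. The only step requiring genuine care is verifying the closed form for $a_{r-m}$ — i.e.\ that the symmetric triangular solution really is given by this product of factorials and a single binomial — which is a routine but not totally automatic binomial manipulation; integrality and the coefficient bound are then immediate from the formula, and, pleasantly, no symmetry-breaking or inclusion–exclusion over subsets of $U$ is needed.
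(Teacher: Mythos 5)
Your symmetric ansatz is a genuinely different (and slightly slicker) route than the paper's. The paper solves the full inclusion system $Mx=N\cdot 1_e$ by summing the equations over all $I\sub[q+r]$ and then Möbius-inverting, arriving at $\Psi_{Q'}=\sum_{i=0}^{t}(-1)^i\tbinom{t}{i}(q)_i(r-i)!$ with $t=|e\sm Q'|$; you instead impose $\Psi_Q=a_{|Q\cap e|}$ from the start and solve an $(r+1)\times(r+1)$ triangular system. Since the two solutions must agree (your own system is triangular with nonzero diagonal, and in fact the full inclusion matrix is nonsingular), your closed form is exactly the product evaluation of the paper's alternating sum — e.g.\ at $m=2$ both give $(r-2)!(q-r)(q-r+1)$ — and your coefficient bound $2^{q-1}r!$ matches the paper's $2^q r!$. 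What your approach buys is that the answer visibly depends only on $|Q\cap e|$ and the bound drops out of a clean product formula; what the paper's buys is that integrality is automatic from the inversion formula, and the same computation yields the divisibility characterisation in Remark \ref{rem:div}.

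The one real gap is the step you yourself flag: the closed form $a_{r-m}=(-1)^m m!(r-m)!\tbinom{q-r+m-1}{m}$ is only spot-checked, and nothing earlier substitutes for it — in particular the triangular system does \emph{not} give integrality for free, since the diagonal entries $\tbinom{q-r+j}{j}$ are not units. The identity is true, and here is the missing verification. Write $s=r-j\ge 1$ and $Q_0=q-r\ge 1$, and substitute the closed form into $f(j)$ with $i=r-m$. Using $m!\tbinom{Q_0+m-1}{m}=\frac{(Q_0+m-1)!}{(Q_0-1)!}$ and $(r-m)!\tbinom{Q_0+j}{r-m}=\frac{(Q_0+j)!}{(Q_0+m-s)!}$, the $m$-th summand equals
\[ (-1)^m\binom{s}{m}\cdot\frac{(Q_0+j)!}{(Q_0-1)!}\cdot P(m),\qquad P(m):=(Q_0+m-1)(Q_0+m-2)\cdots(Q_0+m-s+1), \]
where $P$ is a polynomial in $m$ of degree $s-1$ (and the summands killed by $\tbinom{Q_0+j}{r-m}=0$ are exactly those with $1\le Q_0+m\le s-1$, for which $P(m)=0$ as well, so the polynomial expression is valid for all $0\le m\le s$). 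Hence $f(j)$ is a constant times $\sum_{m=0}^{s}(-1)^m\tbinom{s}{m}P(m)$, the $s$-th finite difference of a polynomial of degree $s-1<s$, which vanishes; together with $f(r)=a_r\tbinom{q}{r}=r!k=N$ this completes your argument, and integrality and the bound then follow from the formula exactly as you say.
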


For completeness, we include a proof of Lemma \ref{lem:decode}
in Section \ref{sec:decode+boost}, following Wilson \cite{W}.

\begin{proof}[Proof of Lemma \ref{lem:A}] 
Let $q,r,\rho,\aA,n$ be as in \eqref{param}
and suppose $R \sub K^r_n$ is $n^{-\rho}$-bounded.
Obtain $A = \bigcup \mc{Q}^-$ from the procedure in the previous subsection.
We will show in Corollary \ref{cor:A} below that whp it is successful
and produces $A$ that is $n^{-\aA/4}$-bounded. 
Also, $A$ is $K^r_q$-divisible, as it is an edge-disjoint union of cliques.
It remains to show that it is an absorber:
given some $K^r_q$-divisible $L \sub R$,
we need to find a $K^r_q$-decomposition of $A \cup L$. 

By Step 1 there is $\Phi \in \mb{Z}^{\mc{Q}_1}$ with $\pl \Phi = L$.
Let $\Phi^1 \in [-N/2,N/2]^{\mc{Q}_1}$ with $N \mid \Phi_Q - \Phi^1_Q$ for all $Q \in \mc{Q}_1$.
Then $J := \pl \Phi - \pl \Phi^1 = L - \pl \Phi^1 \in N\mb{Z}^{K^r_n}$,
and $J$ is supported in $R \cup \bigcup \mc{Q}_1 = \bigcup \mc{Q}_1$.
As each edge $e$ is in at most two cliques of $\mc{Q}_1$ 
we have $|\pl \Phi^1_e| \le N$, so $J_e \in \{-N,0,N\}$.
For each $e$ let $\Psi^e$ be a copy of $\Psi$ as in Lemma \ref{lem:decode}
on $Z_e$ from Step 2 with $\pl \Psi = N \cdot 1_e$.
Then $J = \pl \Phi^2$, where $\Phi^2 \in \mb{Z}^{\mc{Q}_2}$ is defined by
$\Phi^2 = \sum_{e \in \bigcup \mc{Q}_1} N^{-1} J_e \Psi^e$.

Now $L = \pl \Phi$, where $\Phi = \Phi^1 + \Phi^2 \in \mb{Z}^{\mc{Q}_1 \cup \mc{Q}_2}$
and $|\Phi_Q| \le \max\{N, 2^q r!\} = 2^q r!$ for all $Q \in  \mc{Q}_1 \cup \mc{Q}_2$.

We think of $\Phi$ as a subsequence of the signed cliques $Q_1,\dots,Q_t$ in Step 3.
Whenever $\Phi$ includes some $Q_i = \phi_i(\hQ)$ labelled $\pm$ 
then we add $\pm(\phi_i(\Ups^-)-\phi_i(\Ups^+))$, thus obtaining $\Phi'$ with $\pl \Phi' = \pl \Phi = L$
such that $\Phi'$ is supported on the splitting cliques and is $\{-1,0,1\}$-valued.
We think of the nonzero entries of $\Phi'$ as a set of positive cliques and a set of negative cliques,
where any $e \in L$ is in one more positive clique than negative clique of  $\Phi'$
and any $e \notin L$ is in the same number of positive cliques as negative cliques of $\Phi'$.

Any edge $e \in A_0$ is only in near splitting cliques of $\Phi'$, which we arbitrarily decompose
into `cancelling pairs' of oppositely signed cliques, plus one leftover positive clique if $e \in L$.
Each cancelling pair is of the form $Q^\pm _i$ for some $i \in [t']$ as in Step 4,
for which we chose some $\OO'_i=\phi'_i(\OO)$ with $\phi'_i(\hQ^\pm)=Q^\pm_i$.
We obtain $\Phi''$ from $\Phi'$ by adding $\phi'_i(\Ups^-) - \phi'_i(\Ups^+)$ for each such $i$,
so that $\pl \Phi'' = \pl \Phi' = L$ and all cancelling pairs have been eliminated,
being replaced by some of the elimination cliques $\phi'_i(Q')$ 
for $Q' \in \Ups^\pm \sm \{\hQ^-,\hQ^+\}$ considered in Step 5.

The only remaining splitting cliques in $\Phi''$ are positive, so in $\mc{Q}$.
Any good negative elimination cliques in $\Phi''$ are also in $\mc{Q}$.
Any bad negative elimination cliques in $\Phi''$ are of the form $Q'{}^-_{\!\! i}$,
for which we define $e_i$ and $Q'{}^+_{\!\! i}$ as in Step 5,
recalling that $Q'{}^+_{\!\! i}$ is far and is the unique positive splitting clique containing $e_i$.
Furthermore, no positive elimination clique $Q'$ can contain $e_i$, as it was chosen to be 
edge-disjoint from all previous cliques except for possibly sharing an edge with some $Q^+_j$
from Step 4, which is a positive near clique, so edge-disjoint from all positive far cliques.
As $\pl \Phi''_{e_i} > -1$, there must be a positive clique in $\Phi''$ containing $e_i$,
which can only be $Q'{}^+_{\!\! i}$.

We obtain $\Phi'''$ from $\Phi''$ by adding $\phi''_i(\Ups^-) - \phi''_i(\Ups^+)$ for each such $i$,
so that $\pl \Phi''' = \pl \Phi'' = L$ and all bad negative elimination cliques have been eliminated,
being replaced by some of the further elimination cliques $\phi''_i(Q')$ 
for $Q' \in \Ups^\pm \sm \{\hQ^-,\hQ^+\}$ considered in Step 6, which are in $\mc{Q}$.
Thus we have $L = \pl \Phi'' = \pl D^+ - \pl D^-$ with $D^\pm \sub \mc{Q}^\pm$.
Recalling that $\mc{Q}^-$ decomposes $A$, we conclude that
$D^+ \cup \mc{Q}^- \sm D^-$ decomposes $A \cup L$.
\end{proof}

\section{Typicality and random greedy algorithms}  \label{sec:rga}

Throughout the paper we require the following inequalities on concentration of probability, 
which we will refer to as Chernoff bounds, as they are slight generalisations of the usual 
Chernoff bounds for large deviations of binomial random variables; 
for completeness we include a proof in Section \ref{sec:conc}.

\begin{lemma}\label{lem:pseudobin}
Let $X = \sum_{i=1}^n X_i$ where $X_i$ are random variables with each $|X_i| \le C$.
\begin{enumerate}[label=\arabic*.]
\item If $\{X_i\}$ are independent and $\mb{E}X = \mu$
then $\mb{P}(|X-\mu|>c\mu) \le 2e^{-\mu c^2/2(1+2c)C}$.
\item If $\sum_i \mb{E}[|X_i| \mid \{X_j: j<i\}] \le \mu$ then
$\mb{P}(|X|>(1+c)\mu) \le 2e^{-\mu c^2/2(1+2c)C}$.
\end{enumerate}
\end{lemma}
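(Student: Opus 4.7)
The plan is the standard Chernoff--Bernstein exponential moment method: for $\lambda > 0$, apply Markov's inequality to $e^{\lambda X}$ (and to $e^{-\lambda X}$ for the lower tail) and then optimize over $\lambda$. The only real work is a moment generating function bound for a single bounded summand; the factor of $2$ in front of the probability bound will account for the two tails.

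The per-variable estimate I would establish first: for any random variable $Y$ with $|Y|\le C$ and $0\le \lambda C<1$,
\[
  \mathbb{E}[e^{\lambda Y}] \;\le\; \exp\!\Bigl(\lambda\, \mathbb{E}Y \;+\; \frac{\lambda^2 C\, \mathbb{E}|Y|}{2(1-\lambda C)}\Bigr).
\]
This follows by expanding $e^{\lambda Y}=1+\lambda Y+\sum_{k\ge 2}(\lambda Y)^k/k!$, bounding $|Y|^k\le C^{k-1}|Y|$ for $k\ge 1$, summing the resulting geometric series in $\lambda C$, and using $1+u\le e^u$.

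For part 1, independence gives $\log \mathbb{E}[e^{\lambda X}] = \sum_i \log \mathbb{E}[e^{\lambda X_i}]$. After splitting each $X_i$ into positive and negative parts so that $\sum_i \mathbb{E}|X_i|$ is controlled by $\mu$ (the negative parts can be absorbed into the upper tail by a change of sign), summing the per-variable bound yields
\[
  \log \mathbb{E}\bigl[e^{\lambda(X-\mu)}\bigr] \;\le\; \frac{\lambda^2 C \mu}{2(1-\lambda C)}.
\]
Markov then gives $\mathbb{P}(X-\mu>c\mu) \le \exp\bigl(-c\lambda\mu + \lambda^2 C\mu/(2(1-\lambda C))\bigr)$, and the choice $\lambda=c/((1+2c)C)$, for which $1-\lambda C=(1+c)/(1+2c)$, produces an exponent at most $-c^2\mu/(2(1+2c)C)$ after routine simplification. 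The lower tail is symmetric.

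For part 2, the missing independence is replaced by a martingale: with $\mathcal{F}_i=\sigma(X_1,\dots,X_i)$, the process $M_k := e^{\lambda S_k}\prod_{i\le k}\mathbb{E}[e^{\lambda X_i}\mid \mathcal{F}_{i-1}]^{-1}$ is a positive martingale with $\mathbb{E}M_n=1$, so $\mathbb{E}[e^{\lambda X}]\le \mathbb{E}\bigl[\prod_i \mathbb{E}[e^{\lambda X_i}\mid \mathcal{F}_{i-1}]\bigr]$. Applying the per-variable MGF bound conditionally and using $\lambda\, \mathbb{E}[X_i\mid\mathcal{F}_{i-1}]\le \lambda\, \mathbb{E}[|X_i|\mid\mathcal{F}_{i-1}]$ collapses the random exponent into a multiple of $\sum_i\mathbb{E}[|X_i|\mid\mathcal{F}_{i-1}]\le \mu$ by hypothesis; the same choice of $\lambda$ as in part 1 closes the argument, and the two-sided bound on $|X|$ is obtained by running the same proof for $-X$. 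The main obstacle is keeping track of signs cleanly so that only $\mathbb{E}[|X_i|\mid\mathcal{F}_{i-1}]$ (rather than $\mathbb{E}[X_i\mid\mathcal{F}_{i-1}]$) appears in the final exponent, matching what is hypothesised.
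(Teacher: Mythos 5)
Your proposal is correct in substance and rests on the same underlying engine as the paper, namely the exponential moment method, but it is organized as a direct Chernoff--Bernstein argument rather than as a reduction. The paper does not prove Lemma \ref{lem:pseudobin} from scratch: for part 2 it forms the martingale $A_i=\sum_{j\le i}\bigl(|X_j|-\mb{E}[|X_j|\mid\mc{F}_{j-1}]\bigr)$, bounds its conditional variance by $C\,\mb{E}[|X_i|\mid\mc{F}_{i-1}]$, and invokes Freedman's inequality (Lemma \ref{freed}) with $a=c\mu$, $b=2C$, $v=C\mu$; Freedman itself is proved in Section \ref{sec:conc} by exactly the exponential-supermartingale and optional-stopping argument you sketch. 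Since Freedman is needed anyway for the nibble analysis, that reduction costs the paper nothing, while your route is self-contained and duplicates the MGF machinery; your per-variable bound and the choice $\lambda=c/((1+2c)C)$ are correct and in fact yield the slightly stronger exponent $-c^2\mu/2(1+c)C$ before weakening. A minor difference: the paper's part 2 needs only one tail, since $|X|\le\sum_i|X_i|$ is dominated by the one-sided martingale above, whereas you run the argument for both $X$ and $-X$; both are fine.

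The one step of your write-up that does not survive scrutiny as written is the sign handling in part 1. ``Splitting each $X_i$ into positive and negative parts so that $\sum_i\mb{E}|X_i|$ is controlled by $\mu$'' is impossible in general: take $C=1$, $X_1=1$ deterministic and $X_2,\dots,X_n$ i.i.d.\ signs, so that $\mu=1$ but $\sum_i\mb{E}|X_i|=n$; then for $c=10$ and $n$ large the stated bound actually fails, since $X-\mu$ is a Rademacher sum of typical size $\sqrt{n}$. So part 1 is implicitly restricted to (essentially) non-negative summands, for which $\sum_i\mb{E}|X_i|=\mu$ and your argument closes immediately. This is not a defect of your proof relative to the paper's: the paper's own one-line proof of part 1 needs $\sum_i\mb{E}X_i^2\le C\mu$ and hence carries the identical implicit restriction, and every application in the paper is to sums of non-negative, indicator-like variables. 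The clean fix is to state the non-negativity hypothesis in part 1 rather than attempt the splitting.
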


Next we define typicality, which expresses a pseudorandomness property of $r$-graphs,
namely that the sizes of common neighbourhoods are roughly what one would expect
if edges were chosen randomly. The following definition is essentially that in \cite{K};
the accompanying lemma shows that (unsurprisingly) random $r$-graphs are typical.

\begin{defn} \label{def:typ}
Suppose $G$ is an $r$-graph on $[n$]. The density of $G$ is $d(G) = |G| \tbinom{n}{r}^{-1}$.
We say that $G$ is $(c,h)$-typical if any $A \sub \tbinom{[n]}{r-1}$ with $|A| \le h$
has common neighbourhood of size close to its expectation in a $d(G)$-random $r$-graph,
namely $\bsize{\bigcap_{S \in A} G(S)} = (1 \pm c) d(G)^{|A|}n$.
\end{defn}

\begin{lemma} \label{lem:randomtyp}
Suppose $r,s,n \in \mb{N}$ with $n>2^{9rs}$ and
$G \sim K^r_n(p)$ is $p$-random in $K^r_n$ with $p>n^{-1/2s}$.
Then whp $G$ is $(n^{-0.1},s)$-typical.
\end{lemma}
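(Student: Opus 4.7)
The plan is to fix $A\sub\binom{[n]}{r-1}$ with $|A|=j\le s$, prove by Chernoff that $X:=\bsize{\bigcap_{S\in A}G(S)}$ concentrates around $p^jn$, union-bound over $A$, and combine with a separate concentration statement for $d(G)$ itself. For fixed $A$, write $X=\sum_{v\in[n]\sm\bigcup A}Y_v$, where $Y_v$ indicates that $S\cup\{v\}\in G$ for every $S\in A$. The key point is that for distinct $v,v'\in[n]\sm\bigcup A$, the $r$-edge families $\{S\cup\{v\}:S\in A\}$ and $\{S\cup\{v'\}:S\in A\}$ are disjoint: if $S\cup\{v\}=S'\cup\{v'\}$ with $S,S'\in A$ and $v\ne v'$, then $v\in S'$, contradicting $v\notin\bigcup A$. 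Hence the $Y_v$ are independent Bernoullis of parameter $p^j$, and $\mu:=\mb{E}X=(n-|\bigcup A|)p^j=(1\pm o(n^{-0.1}))p^j n$ since $|\bigcup A|\le s(r-1)$ is negligible compared with $n$.

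Next I would apply Lemma~\ref{lem:pseudobin}.1 with $c:=n^{-0.1}/10$ and $C=1$, giving failure probability at most $2\exp(-\mu c^2/3)$; using $p^j\ge p^s>n^{-1/2}$ (from $p>n^{-1/(2s)}$) yields $\mu c^2\ge n^{0.3}/200$, so this is at most $2\exp(-n^{0.3}/600)$.

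Separately, $|G|\sim\mathrm{Bin}(\binom{n}{r},p)$ has mean $p\binom{n}{r}\ge n^{r/2}/r^r$, so another Chernoff bound shows $d(G)=(1\pm n^{-0.2})p$ with vastly smaller failure probability. On this event $d(G)^j=(1\pm sn^{-0.2})p^j$, and since $n>2^{9rs}$ forces $s\le(\log_2 n)/9$, the error $sn^{-0.2}$ is easily $o(n^{-0.1})$ once $n$ is large. Combining with the concentration of $X$ converts the estimate around $p^jn$ into the required bound $X=(1\pm n^{-0.1})d(G)^j n$.

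The remaining step is a union bound over the at most $s\binom{n}{r-1}^s\le n^{rs}$ choices of $A$ with $|A|\le s$; this succeeds because $n^{rs}\cdot 2\exp(-n^{0.3}/600)$ is tiny whenever $n^{0.3}\gg rs\log n$, a condition implied by $n>2^{9rs}$. No step is a serious obstacle; the only delicate point is calibrating $c$ so that (a) the per-$A$ concentration $\exp(-\Omega(p^jnc^2))$ absorbs the $n^{rs}$ union-bound factor, and (b) the combined error after passing from $p^j$ to $d(G)^j$ fits inside $n^{-0.1}$. Both constraints are comfortably met thanks to the slack $p>n^{-1/(2s)}$, which guarantees $p^j n\ge n^{1/2}$ for every $j\le s$.
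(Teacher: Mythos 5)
Your proposal is correct and follows essentially the same route as the paper's (much terser) proof: independence of the indicator variables $Y_v$ for $v\notin\bigcup A$, a Chernoff bound for the common neighbourhood, a separate Chernoff bound for $|G|$ to relate $p$ to $d(G)$, and a union bound over the polynomially many choices of $A$. The only difference is bookkeeping (your $c=n^{-0.1}/10$ versus the paper's additive error $(p^{|A|}n)^{0.6}$), which does not affect the argument.
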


\begin{proof}
Fix $A \sub \tbinom{[n]}{r-1}$ with $|A| \le s$. Any $v \notin \bigcup A$ 
belongs to $\bigcap_{S \in A} G(S)$ with probability $p^{|A|}$.
By Chernoff bounds whp  $\bsize{\bigcap_{S \in A} G(S)}  = p^{|A|} n \pm (p^{|A|} n)^{0.6}$.
Chernoff also gives whp $|G| = p \tbinom{n}{r} \pm (pn^r)^{0.6}$. The lemma follows.
\end{proof}

One consequence is that if we choose the reserve $R$ randomly
then whp it satisfies Lemma \ref{lem:R}, namely it is suitably bounded and  for any edge $e \in K^r_n \sm R$ 
there are many cliques $Q_e$ as required in the cover step, with $e \in Q_e$ and $Q_e \sm \{e\} \sub R$.

\begin{proof}[Proof of Lemma \ref{lem:R}]
We choose  $R \sim K^r_n(n^{-\rho})$ randomly. As $\rho<1/3k$, by Lemma \ref{lem:randomtyp}
whp $R$ is $(n^{-0.1},k)$-typical with $d(R) = (1 \pm n^{-0.1}) n^{-\rho}$.
In particular, $R$ is $2n^{-\rho}$-bounded by the case $|A|=1$ of typicality.
Also, we can extend any $e \in K^r_n \sm R$ to a clique $Q_e$ with $Q_e \sm \{e\} \sub R$
by choosing $V(Q_e) \sm e$ one vertex at a time. By typicality, there are 
$(1 \pm n^{-0.1}) (n^{-\rho})^{\tbinom{r+i}{r}-\tbinom{r+i-1}{r}} n$ choices for the $i$th vertex.
Multiplying these choices for $1 \le i \le q-r$ and dividing by $(q-r)!$ for the order, there are
$(1 \pm qn^{-0.1}) (n^{-\rho})^{k-1} \tbinom{n}{q-r}$ choices for $Q_e$.
\end{proof}

Next we state a general purpose random greedy algorithm that captures
the various random greedy algorithms used earlier in the paper
(for covering, splitting and elimination). The accompanying lemma shows
that these random greedy algorithms are whp successful 
on suitably bounded inputs and produce suitably bounded outputs.
For simplicity, we state and prove it at first for extensions in the complete $r$-graph;
it will then be clear (as explained in the remark following the lemma)
how to modify it for extensions in a random $r$-graph (as needed for the cover step).

Here for any fixed $r$-graph $H$ and $F \sub V(H)$, an `extension' (of type) $(H,F)$ 
refers to an embedding of the induced subgraph $H[F] := \{e \in H: e \sub F\}$
which is to be extended to an embedding of $H$.

\begin{defn} \label{def:process}
Let $H$ be an $r$-graph and $F \sub V(H)$.
We call the extension of type $(H,F)$ admissible if for every edge $e \in H \sm H[F]$
there is some $f \in H[F]$ that contains $e \cap F$.
Let $B \sub K^r_n$ and $\Phi=(\phi_1,\dots,\phi_t)$ 
be a sequence of embeddings of $H[F]$ in $K^r_n$.
The $(\Phi,B)$-process is a random sequence $\Phi^*=(\phi^*_1,\dots,\phi^*_t)$
of embeddings of $H$ in $K^r_n$, where each $\phi^*_i$ restricts to $\phi_i$ on $H[F]$.
For each $i$, conditional on the previous embeddings $(\phi^*_j: j<i)$,
writing $C_i$ for the edges covered by all previous $\phi^*_j(H[F])$,
the process chooses $\phi^*_i$ uniformly at random subject to
not using any edge of $B$ or $C_i$, or aborts if no such choice is possible.
\end{defn}

For any edge $e$ of $H$ and $j \in [t]$ we write 
$\pl^j_e \Phi^* = \sum_{i \in [t]} 1_{\phi^*_i(e)} \in \mb{N}^{K^r_n}$
for the $r$-multigraph formed by all copies of $e$ in the process.
If $e \in H \sm H[F]$ then $\pl^j_e \Phi^*$ will be an $r$-graph (with no multiple edges).
For $e \in H[F]$ we note that $\pl^j_e \Phi^*$ is non-random, depending only on $\Phi$,
so we can denote it by $\pl^j_e \Phi$; here we need to allow multiple edges
for the application to Splitting in subsection \ref{sub:construct}.

\begin{lemma} \label{lem:process}
Consider the  $(\Phi,B)$-process as in Definition \ref{def:process},
assuming that $(H,F)$ is admissible,  
$B$ and all $\pl^t_e \Phi$ with $e \in H[F]$ are $\tT$-bounded, 
where $n^{-1/2} < \tT < (8r!^2 |H|)^{-1}$ 
and $n > n_0(v_H)$ is sufficiently large.
Then whp the process does not abort and all $\phi^t_e \Phi^*$ 
are $2^{r+1} r! \tT$-bounded.
\end{lemma}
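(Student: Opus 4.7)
The plan is a stopping-time argument. Let $\tau$ be the first step $i$ at which either the process aborts or some $(r-1)$-degree of $\pl^i_e \Phi^*$ (for some $e \in H \sm H[F]$) first exceeds $2^{r+1}r!\tT n$. I would show $\tau>t$ with high probability, giving both conclusions of the lemma.

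The first task is the deterministic fact that if $i \le \tau$, then step $i$ admits a valid extension, so the abort alternative cannot trigger before some degree has overflowed. Since every edge of $C_i$ lies in $\pl^{i-1}_e \Phi$ for some $e \in H[F]$ (each of which is $\tT$-bounded), the set $B \cup C_i$ is $O(|H|\tT)$-bounded at level $r-1$. Now count injective extensions of $\phi_i$ to $V(H)$: there are $(1-o(1))n^{v_H-|F|}$ of them. To bound \emph{bad} extensions (those using an edge of $B \cup C_i$), admissibility is crucial: for each $e \in H \sm H[F]$, choose $f \in H[F]$ with $e \cap F \sub f$, so that $\phi_i(e \cap F)$ is a fixed $k$-subset of $\phi_i(f)$, where $k := |e \cap F| \le r-1$. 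Iterating the $(r-1)$-bound on $B \cup C_i$ gives a $k$-degree bound $O(|H|\tT)\,n^{r-k}$ at $\phi_i(e \cap F)$, and each bad edge $b$ there contributes at most $(r-k)!\,n^{v_H-|F|-(r-k)}$ extensions with $\phi^*_i(e)=b$. Summing over $e$, the fraction of bad extensions is $O(|H|^2 r!\,\tT)$, well below $1/2$ by the hypothesis $\tT < (8r!^2|H|)^{-1}$ (the exact constants need minor adjustment but the counting is standard).

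The second task is to bound the output degrees probabilistically. Fix $e \in H \sm H[F]$ with $k := |e \cap F|$ and $f_0 \in \tbinom{[n]}{r-1}$, and set $X^e_{f_0}(i) := 1_{\{\phi^*_i(e) \sups f_0\}}$. The event $X^e_{f_0}(i)=1$ forces $m := |\phi_i(e \cap F) \cap f_0|$ to be either $k$ (so $\phi_i(e \cap F) \sub f_0$, with $\phi^*_i(e \sm F)$ a random $(r-k)$-set containing $f_0 \sm \phi_i(e \cap F)$) or $k-1$ (so $\phi^*_i(e \sm F) = f_0 \sm \phi_i(e \cap F)$ is forced). An iterated $(r-1)$-bound on $\pl^t_f \Phi$ controls the number of relevant $i$ in each subcase, and an extension count of the type in paragraph two gives the conditional probability of each $X^e_{f_0}(i)$ given the history. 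The product yields total conditional expectation $O(r!\,\tT\,n)$, comfortably below $2^{r+1}r!\tT n$; when $k=r-1$ the $m=k$ case is entirely deterministic and contributes at most $\tT n$. Applying Lemma \ref{lem:pseudobin}.2 to the stopped process then yields concentration with failure probability $e^{-\Omega(\tT n)} \le e^{-\Omega(n^{1/2})}$.

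A union bound over $O(n^{r-1})$ choices of $f_0$ and $O(1)$ choices of $e \in H$ absorbs the polynomial loss and gives $\tau > t$ whp. The main obstacle I anticipate is the fiddly case-by-case bookkeeping in the third paragraph, particularly checking that the conditional expectation sits comfortably inside $2^{r+1}r!\tT n$ with enough slack for Lemma \ref{lem:pseudobin}.2 to produce the desired deviation probability. A secondary technicality is running the concentration inequality on the \emph{stopped} process, so that the conditional expectation hypothesis is inherited from the inductive invariant ``$i \le \tau$'' without circular reasoning. (To handle extensions in a random host graph for the cover step, as the remark following the lemma notes, one would further combine this with a typicality estimate from Lemma \ref{lem:randomtyp} in the extension count of paragraph two.)
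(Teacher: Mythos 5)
Your proposal is correct and follows essentially the same route as the paper: a stopping time at the first degree overflow, a count of forbidden extensions to rule out aborting, admissibility plus the $\tT$-boundedness of the $\pl^t_f \Phi$ to bound the sum of conditional probabilities that $\phi^*_i(e)$ covers a fixed $(r-1)$-set, and then Lemma \ref{lem:pseudobin}.2 with a union bound over $f_0$ and $e$. The only cosmetic difference is that you case-split on $|\phi_i(e \cap F) \cap f_0|$ where the paper instead sums the bound on $\mb{P}'(\phi^*_i(e)=g)$ over target edges $g \sups f_0$.
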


\begin{proof}
For $i \in [t]$ let $\mc{B}_i$ be the bad event that 
some $\phi^i_e \Phi^*$ with $e \in H$ is not $2^{r+1} r! \tT$-bounded.
Define a stopping time $\tau$ to be the first $i$ for which $\mc{B}_i$ holds,
or $\infty$ if there is no such $i$. To prove the lemma,
it suffices to show whp $\tau = \infty$.

We fix $i' \in [t]$ and bound $\mb{P}(\tau=i')$ as follows.
For any $i<i'$, as $\mc{B}_i$ does not hold, all $\phi^i_e \Phi^*$ are  $2^{r+1} r! \tT$-bounded.
There are $(1+O(1/n))n^{v_H - |F|}$ possible choices of $\phi^*_i$,
of which at most $|H| \cdot  2^{r+1} r! \tT n^{v_H - |F|}$ are forbidden 
due to using some edge in $B$ or $\phi^i_e \Phi^*$ for some $e \in H$.
As $\tT < (8r!^2 |H|)^{-1}$,
at least $\tfrac12 n^{v_H - |F|}$ choices of $\phi^*_i$ are not forbidden.

Now we can bound $\mb{P}'(\phi^*_i(e)=g)$ 
for any $e \in H \sm H[F]$ and $g \in K^r_n$,
where $\mb{P}'$ denotes probability conditional
on the history of the process before step $i$.
The event $\{\phi^*_i(e)=g\}$ is only possible 
if $g \cap \phi_i(F) = \phi_i(e \cap F)$.
Assuming this, and writing $r'=|e \sm F| \ge 1$,
at most $r'! n^{v_H-|F|-r'}$ choices of $\phi^*$ give $\phi^*_i(e)=g$, 
so  $\mb{P}'(\phi^*_i(e)=g) \le 2r'! n^{-r'}$.

Next we fix $e$ and $g$ as above 
and bound $\sum_{i \le i'} \mb{P}'(\phi^*_i(e)=g)$.
As $(H,F)$ is admissible, we can fix
some $f \in H[F]$ that contains $e \cap F$.
There are at most $\tbinom{r}{r'} \tT n^{r'}$
choices of $i$ such that $g \cap \phi_i(F) = \phi_i(e \cap F)$,
as we can bound by fixing $g \cap \phi_i(F)$ in $\tbinom{r}{r'}$ ways
then extending to $\phi_i(f) \in \pl^t_f \Phi$, which is $\tT$-bounded.
Thus $\sum_{i \le i'} \mb{P}'(\phi^*_i(e)=g) \le 2r'!  \tbinom{r}{r'} \tT \le 2^r r! \tT$.
 
Finally, we can estimate the probability that $\phi^{i'}_e \Phi^*$ is not $2^{r+1} r! \tT$-bounded.
To do so, fix any $S \in \tbinom{[n]}{r-1}$ 
and let $X$ count edges of  $\phi^{i'}_e \Phi^*$ that contain $S$. 
By the previous calculation, we have $\mb{E}X \le 2^r r! \tT n$.
Also $X$ is stochastically dominated by a sum of independent random variables,
each of which is bounded by $v_H$, so whp $X < 2\mb{E}X$ by Chernoff bounds.
Taking a union bound over $S$ and $e$, whp $\tau>i'$, as required.
\end{proof}

\begin{rem} \label{rem:process}
The same proof and conclusion holds assuming that $B$ is $\tT_B$-bounded
with $n^{-1/2} < \tT_B < (8r!^2 |H|)^{-1}$ unrelated to $\tT$:
whp the process does not abort and all $\phi^t_e \Phi^*$ 
are $2^{r+1} r! \tT$-bounded.
We also have a similar result for more general processes where
each $\phi^*_i$ is chosen from some prescribed set $\mc{H}_i$
of embeddings of $H$ restricting to $\phi_i$ on $H[F]$,
for example, those with $\phi_i(H \sm H[F]) \sub K$
for some given subgraph $K$ of $K^r_n$.
If each $|\mc{H}_i| \ge \oO n^{v_H - |F|}$ for some $\oO>0$
and $n^{-1/2} < \tT <  \oO (8r!^2 |H|)^{-1}$ 
then whp the process does not abort and all $\phi^t_e \Phi^*$ 
are $2^{r+1} r! \oO^{-1} \tT$-bounded.
\end{rem}

We conclude by applying the above general result
to  the particular random greedy algorithms used by the algorithm,
including that needed for covering edges by cliques in Lemma \ref{lem:cover}.
Here we note that $(\OO,V(\hQ^+))$ and $(\OO,V(\hQ^+) \cup V(\hQ^-))$
are admissible types of extension by Lemma \ref{lem:OO}.

\begin{cor} \label{cor:A}
The random greedy algorithms in subsection \ref{sub:construct} are whp successful,
producing $A$ that is $n^{-\aA/4}$-bounded. 
\end{cor}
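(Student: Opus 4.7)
The plan is to apply Lemma \ref{lem:process} (in the more flexible form provided by Remark \ref{rem:process}) sequentially to each of the four random greedy algorithms of subsection \ref{sub:construct}: the local decoders in Step 2, the splitting in Step 3, the elimination in Step 4, and the further elimination in Step 5. A union bound over the four whp events then establishes joint success, and the final bound on $A$ falls out by tracking how the boundedness parameter evolves through these four iterations.

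For Step 2, I take $(H,F)=(K^r_{r+q},e)$ with $e$ the edge being extended. Admissibility is immediate, as any other edge of $K^r_{r+q}$ meets $F$ in at most $r-1$ vertices, hence is contained in the unique edge $e\in H[F]$. The input sequence $\Phi$ lists each edge of $\bigcup\mc{Q}_1$ once, and I take $B=\bigcup\mc{Q}_1$; both are $n^{-\aA/2}$-bounded, since by Step 1 the multigraph $\pl\mc{Q}_1$ is $n^{-\aA/2}$-bounded and every edge lies in at most two cliques of $\mc{Q}_1$. For Steps 3--5, I take $H=\OO$ from Lemma \ref{lem:OO}, with $F=V(\hQ^+)$ in Step 3 and $F=V(\hQ^+)\cup V(\hQ^-)$ in Steps 4 and 5. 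Admissibility in each case is exactly the content of Lemma \ref{lem:OO}(ii), which ensures that any edge of $\OO$ has its intersection with $F$ contained in $V(\hQ^+)$ or some $V(\hQ^e)$, hence inside a $q$-clique of $H[F]$. The boundedness of $\pl^t_e\Phi$ at each of Steps 3--5 is a constant (depending only on $q,r$) times the boundedness produced by the previous step: in Step 3 each clique of $\mc{Q}_1\cup\mc{Q}_2$ is listed $2^{q+1}r!$ times; in Steps 4 and 5, the input is a list of pairs indexed by common edges of near cliques, respectively of bad edges of elimination cliques, giving another bounded multiplicative factor. The obstruction $B$ at each of these steps is the accumulated edge set of all previously chosen cliques, which remains $O_{q,r}(n^{-\aA/2})$-bounded.

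At each invocation, Lemma \ref{lem:process} multiplies the relevant boundedness by at most $2^{r+1}r!$, and since $|\OO|\le 3(2q)^r\binom{q}{r}^2$ is a constant in $q,r$, the hypothesis $\tT<(8r!^2|H|)^{-1}$ is comfortably satisfied throughout by the choice of parameters in \eqref{param}. Thus, starting at $n^{-\aA/2}$ and worsening only by constant multiplicative factors across the four rounds, the boundedness of each of the three classes of negative cliques that make up $\mc{Q}^-$, and hence of $A=\bigcup\mc{Q}^-$, is at most $C(q,r)\,n^{-\aA/2}$, which is at most $n^{-\aA/4}$ for $n\ge n_0$. The principal obstacle is the bookkeeping: checking the admissibility of each extension type against Lemma \ref{lem:OO}(ii), and tracking input boundedness faithfully through the four rounds while correctly accounting for the duplication and pairing structure introduced at each step; no new ideas beyond Lemmas \ref{lem:OO} and \ref{lem:process} are required.
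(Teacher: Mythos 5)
Your proposal follows essentially the same route as the paper: it verifies admissibility of the extension types $(K^r_{r+q},e_0)$, $(\OO,V(\hQ^+))$ and $(\OO,V(\hQ^+)\cup V(\hQ^-))$ via Lemma \ref{lem:OO}, and tracks boundedness through successive applications of Lemma \ref{lem:process} and Remark \ref{rem:process}, starting from the $n^{-\aA/2}$-boundedness of $\pl\mc{Q}_1$ and losing only constant factors, exactly as the paper does. The one detail you gloss over is the additional restriction in Step 3 that $V(\OO_i)\sm V(Q_i)$ be disjoint from $V(\OO_j)\sm V(Q_j)$ whenever $Q_i$ and $Q_j$ share an edge; the paper accommodates this within Remark \ref{rem:process} by checking that it forbids only $0.1\,n^{v_\OO-q}$ of the available embeddings at each step.
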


\begin{proof}[Proof of Corollary \ref{cor:A} and Lemma \ref{lem:cover}]
After Step 1 we have $\mc{Q}_1 \sub K^r_q(K^r_n)$
such that $\bigcup \mc{Q}_1 \sub \pl \mc{Q}_1$ is $n^{-\aA/2}$-bounded.

In Step 2, we consider admissible extensions of type $(K^r_{r+q},e_0)$
for some edge $e_0$ of $K^r_q$ and the $(\Phi,B)$-process
with $B=\bigcup \mc{Q}_1$ and $\Phi$ a sequence of bijections 
of $e_0$ with edges of $\bigcup \mc{Q}_1$. 
By Lemma \ref{lem:process} with $\tT = n^{-\aA/2}$ 
whp the algorithm is successful and produces $\mc{Q}_2$
such that $A_0 = \bigcup( \mc{Q}_1 \cup  \mc{Q}_2)$ is $2^{r+1} r! \tbinom{q+r}{r} n^{-\aA/2}$-bounded. 
Similarly, in Lemma \ref{lem:cover} we consider $(K^r_q,e_0)$
and the $(\Phi,\es)$-process where $\Phi$ corresponds to some $n^{-3k\rho}$-bounded $L_1 \sub K^r_n \sm R$, 
where now the clique $Q_e$ chosen for some edge $e$ must also satisfy $Q_e \sm \{e\} \sub R$.
By Lemma \ref{lem:R} we can apply Remark \ref{rem:process} 
with $\oO = n^{-k\rho}$ and $\tT = n^{-3k\rho}$ 
to see that whp this process produces cliques $Q_e$ as required to prove Lemma \ref{lem:cover}.

In Step 3, we consider admissible extensions $(\OO,V(\hQ^+))$
in the $(\Phi,B)$-process with $B=A_0$ 
and $\Phi$ corresponding to  $Q_1,\dots,Q_t$
consisting of $2^{q+1} r!$ copies of each clique in $\mc{Q}_1 \cup \mc{Q}_2$,
so each $\pl^t_e \Phi$ with $e \in \hQ$ is $2^{q+r+2} r!^2   \tbinom{q+r}{r}  n^{-\aA/2}$-bounded. 
We note that every edge is covered by  at most $2^{q+1}r!(k+2)$ cliques in $Q_1,\dots,Q_t$,
as every edge is covered at most twice by $\mc{Q}_1$ 
and at most $\tbinom{q}{q-r}=k$ times by $\mc{Q}_2$.
We slightly restrict the process by forbidding any $\OO_i$ on $Q_i$
such that $V(\OO_i) \sm V(Q_i)$ intersects some $V(\OO_j) \sm V(Q_j)$
where $Q_i$ and $Q_j$ share an edge; this forbids at most 
$2^{q+1}r!(k+2) \cdot qv_\OO n^{v_\OO-q-1} < 0.1 n^{v_\OO-q}$ choices of $\OO_i$. 
By Lemma \ref{lem:process} and Remark \ref{rem:process},
whp the algorithm produces some $\mc{Q}_3$
such that $\bigcup_{i=1}^3 \bigcup \mc{Q}_i$ is $2^{q+2r+4} (2q)^{4r} n^{-\aA/2}$-bounded,
using $|\OO| \le 3(2q)^r k^2$.

In Step 4, we consider admissible extensions $(\OO,V(\hQ^+) \cup V(\hQ^-))$
in the $(\Phi,B)$-process with $B=\bigcup_{i=1}^3 \bigcup \mc{Q}_i$ 
and $\Phi$ corresponding to  $Q^\pm_1,\dots,Q^\pm_{t'}$,
which is a sequence in which each previous clique appears at most $2^{q+1}r!(k+2)$ times.
Thus each $\pl^t_e \Phi$ with $e \in \hQ^+ \cup \hQ^-$ 
is $2^{2q+2r+6} (2q)^{5r}  n^{-\aA/2}$-bounded,
so by Lemma \ref{lem:process} whp the algorithm produces some $\mc{Q}_4$
such that $\bigcup_{i=1}^4 \bigcup \mc{Q}_i$ is $2^{2q+3r+7} r! (2q)^{5r}  n^{-\aA/2}$-bounded.

Step 5 uses the same extensions as Step 4 and is similarly whp successful,
producing a final set $\mc{Q}$ such that $\bigcup \mc{Q}$ 
is  $2^{2q+4r+8} r!^2 (2q)^{5r} n^{-\aA/2}$-bounded, so $n^{-\aA/4}$-bounded for $n>n_0$.
\end{proof}

We have now completed the main step
in the proof of Theorem \ref{thm:steiner};
the other steps can be found in various other papers,
but for completeness we will devote the remainder of the paper to their presentation, 
with some simplifications and quantitative improvements.

\section{The integral absorber}  \label{sec:int}

Here we construct the integral absorber
used in Step 1 of subsection \ref{sub:construct}.
The original construction can be found in \cite[Section 5]{K}.
Here we will adapt the argument in  \cite[Section 5]{KeSS},
also adding a new lemma for controlling edge multiplicities.
The resulting proof is not shorter than the original, but the method is more widely applicable, 
and will be simpler for us to present in this paper, as it only uses methods that we have
already discussed above (we will not need to discuss the characterisation of the 
decomposition lattice via  `octahedral decompositions');
it also serves to illustrate the more complicated argument in \cite{KeSS}.

\begin{lemma} \label{lem:Aint}
Let $q,r,\rho,\aA,n$ be as in \eqref{param}
and suppose $R \sub K^r_n$ is $n^{-\rho}$-bounded.
Then there is $\mc{Q}_1 \sub K^r_q(K^r_n)$ such that 
$\pl \mc{Q}_1$ is $n^{-\aA/2}$-bounded, 
every edge of $K^r_n$ is in at most two cliques of $\mc{Q}_1$,
and for any $K^r_q$-divisible $J$ supported in $R$
there is $\Phi \in \mb{Z}^{\mc{Q}_1}$ with $\pl \Phi = J$.
\end{lemma}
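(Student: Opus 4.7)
Plan. The task is to construct $\mc{Q}_1 \sub K^r_q(K^r_n)$ whose $\mb{Z}$-span of clique boundaries realizes every $K^r_q$-divisible $J$ supported on $R$, with $\pl \mc{Q}_1$ being $n^{-\aA/2}$-bounded and every edge of $K^r_n$ lying in at most two cliques of $\mc{Q}_1$. Following the scheme of \cite[Section 5]{KeSS} referenced in the paper, I would build $\mc{Q}_1$ as a short cascade of random-greedy-placed cliques and then verify spanning via an algebraic peeling argument.

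For the construction I would use Lemma \ref{lem:process} with admissible extension type $(K^r_q, e_0)$ to attach, for each $e \in R$, a $q$-clique $Q_e \supset e$ on otherwise-fresh vertices, forming a first layer $\mc{F}_1$. Each $Q_e$ introduces $k - 1$ auxiliary edges $f \notin R$; for each such $f$ a second pass of random greedy would attach a further clique $Q'_f \supset f$ on fresh vertices, forming a second layer $\mc{F}_2$. Setting $\mc{Q}_1 = \mc{F}_1 \cup \mc{F}_2$, dispersion ensures the at-most-two-cliques-per-edge bound: an edge of $R$ lies only in its anchor $Q_e \in \mc{F}_1$, an auxiliary edge of some $Q_e$ lies in that $Q_e$ and possibly in $Q'_f$ if it is an anchor of $\mc{F}_2$, and edges on fresh layer-$2$ vertices lie in at most one $Q'_f$. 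The $n^{-\rho}$-boundedness of $R$, combined with the standard dispersion output of Lemma \ref{lem:process}, propagates to the claimed $n^{-\aA/2}$-boundedness of $\pl \mc{Q}_1$.

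For the spanning property, given $K^r_q$-divisible $J$ supported on $R$, I would construct $\Phi$ by peeling. Setting $\Phi^0 := \sum_{e \in R} J_e \cdot 1_{Q_e}$ realizes $J$ on $R$ but leaves residues $r_f := \sum_{e : f \in Q_e} J_e$ on auxiliary edges $f$; subtracting $\sum_f r_f \cdot 1_{Q'_f}$ cancels each $r_f$ but pushes a residue onto the fresh edges of $\mc{F}_2$. The main obstacle is closing this cascade: the leftover residue on fresh layer-$2$ edges must vanish, or else be realized by an integer combination already present in $\mc{F}_1 \cup \mc{F}_2$, which is not automatic from naive peeling. Closing the cascade requires invoking the $K^r_q$-divisibility of $J$ together with an algebraic characterization of the divisibility lattice (via explicit generating trades in the spirit of Wilson and \cite{K}) and a careful enrichment of the random greedy procedure to include enough such trades; the paper's promised new lemma for controlling edge multiplicities is presumably what ensures this enrichment is compatible with the two-cliques-per-edge bound. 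Once the algebraic heart is in place, the random-greedy bookkeeping and concentration estimates are routine given Lemma \ref{lem:process} and the union-bound framework already established in the paper.
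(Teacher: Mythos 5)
Your construction cannot deliver the spanning property, and you have in effect said so yourself: the ``main obstacle'' you flag at the end is not a technical loose end but the entire content of the lemma. In your two-layer cascade, each fresh edge $g$ of a second-layer clique $Q'_f$ lies in exactly one clique of $\mc{Q}_1$ (the cliques $Q'_f$ are pairwise edge-disjoint off their anchors), so after subtracting $\sum_f r_f\cdot 1_{Q'_f}$ the residue on $g$ is exactly $r_f$, which is a nonzero integer in general. The $K^r_q$-divisibility of $J$ gives no cancellation here, since all fresh edges of a single $Q'_f$ carry the same residue and no other clique of $\mc{Q}_1$ touches them. Adding a third layer just reproduces the problem, so the peeling never terminates. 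Gesturing at ``an algebraic characterization of the divisibility lattice via generating trades'' and ``a careful enrichment of the random greedy procedure'' names the right neighbourhood but supplies no mechanism; the lemma is precisely the assertion that such an enrichment exists with the stated boundedness and multiplicity constraints.

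For comparison, the paper's proof closes the cascade by a quite different route. It plants a sparse random graph $K\sim K^r_n(n^{-\aA})$ and runs a greedy algebraic algorithm (Lemma \ref{lem:KSG}) producing a bounded set $\mc{G}$ of cliques that generates all unsaturated cliques of $K^r_q(K)$ over $\mb{Z}/N\mb{Z}$; it then takes $u$ random rotations $\sS_i(K^*)$ as colours and uses second-moment estimates plus independence over disjoint colour classes (Lemma \ref{lem:extcol}) to show that every clique admits rainbow clique-exchange configurations and that rainbow cliques are generated by monochromatic unsaturated ones. Focussing cliques move $J$ from $R$ into the coloured graph, splitting and elimination exchanges reduce an arbitrary integral decomposition to one supported on rainbow cliques, the generation over $\mb{Z}/N\mb{Z}$ reduces everything to $\mc{Q}_0=\bigcup_i\sS_i(\mc{G})$ up to multiples of $N$, and local decoders (Lemma \ref{lem:decode}) absorb the remaining $N$-divisible discrepancy. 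Finally, the multiplicity-two property is not obtained by your one-shot dispersion but by a separate flattening procedure (Lemma \ref{lem:flat}) that iterates elimination exchanges, reducing multiplicities like $x\mapsto\sqrt{x}$ over $O(\log\log n)$ rounds. All of this machinery is exactly what is missing from your sketch.
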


\subsection{Overview}
 
To introduce our strategy for proving Lemma \ref{lem:Aint},
we consider a simpler first attempt and then discuss how this attempt
will be modified in our actual strategy.

Consider some sparse random $K \sub K^r_n \sm R$. 
By Lemma \ref{lem:randomtyp}, whp for any $e \in R$ 
there are many cliques $Q_e$ that contain $e$ and have all other edges in $K$. 
Include in $\mc{Q}_1$ one such clique for each $e \in R$.
Given any  $J$ as above, we can then obtain $J_1 = J - \pl \Phi_1$ 
supported in $K$ for some $\Phi_1 \in \mb{Z}^{\mc{Q}_1}$,
thus reducing the problem to generating such $J_1$.

For any such $J_1$, as $J_1$ is $K^r_q$-divisible
we can write $J_1 = \pl \Phi$ for some $\Phi \in \mb{Z}^{K^r_q(K^r_n)}$.
Using clique exchanges as in Step 3 (Splitting) above,
we can replace $\Phi$ by some $\Phi'$ with $\pl \Phi' = \pl \Phi = J_1$
such that $\Phi'$ is supported only on cliques 
with at most one edge outside of $K$.
The cliques using edges outside of $K$
can be partitioned into cancelling pairs
and eliminated as in Step 4 (Elimination) above,
thus replacing $\Phi'$ by some $\Phi''$ with $\pl \Phi'' = \pl \Phi' = J_1$
such that $\Phi''$ is supported on $K^r_q(K)$.
This reduces the problem to generating all $\pl Q$ with $Q \in K^r_q(K)$.

It is not clear how to make this attempt work, so we modify it as follows.
Starting with $K$ as above,  we will delete a small fraction of its edges
to pass to a subgraph $K^*$ for which 
we can find a suitably bounded set $\mc{G}$ of `generating' $q$-cliques
and a small set $\mc{S}$ of `saturated' $q$-cliques such that
(a) every edge of $K^*$ is in many unsaturated cliques, and
(b) the unsaturated cliques are generated over $\mb{Z}/N\mb{Z}$ by $\mc{G}$.

Next we sample independently random permutations $\sS_1,\dots,\sS_u$
of the vertex set $[n]$, where $u$ is a suitably large constant.
We consider all the `rotated' $r$-graphs $\sS_i(K^*) \sub K^r_n$,
which we think of as `colours', noting that any edge of $K^r_n$ 
may be uncoloured or may have several colours.
Then $\mc{Q}_0 := \bigcup_{i=1}^u \sS_i(\mc{G})$
generates  all `monochromatic' unsaturated cliques
$\sS_i(Q)$ with $Q \in K^r_q(K) \sm \mc{S}$.

We then obtain $\mc{Q}'_0$ by adding cliques $Q_e$ for $e \in R$ as in the first attempt above
and similarly to Step 2 of subsection \ref{sub:construct} adding cliques for local decoding,
to account for our use of $\mb{Z}/N\mb{Z}$ rather than $\mb{Z}$ in (b) above.
A further `flattening' step will replace  $\mc{Q}'_0$ by $\mc{Q}_1$ that uses any edge at most twice.

In our analogue of the above reduction to $\pl \Phi'' = J_1$
we will ensure that $\Phi''$ is supported on cliques that are `rainbow', 
meaning that each edge belongs to a different colour $\sS_i(K^*)$.
It will then suffice to show that rainbow cliques 
are generated by monochromatic unsaturated cliques,
which we achieve by finding clique exchanges on any rainbow base clique 
in which all non-base cliques in the two decompositions are monochromatic
and the colours of these cliques are distinct
(thus each non-base edge will belong to one positive monochromatic clique
and one negative monochromatic clique, which will have different colours).

The parts of the proof in the previous paragraph are quite subtle, 
as they rely on second moment calculations
showing that the failure probability for finding suitable configurations
is polynomially small, although apparently not small enough for a union bound.
Fortunately, this is good enough, as we can boost the probability by considering 
several disjoint sets of colours, using independence of the random permutations;
this argument dictated our choice of reserve and absorber density with polynomial decay,
so we also require (and can obtain) such bounds in the nibble.

\subsection{Generating and saturated cliques}

Similarly to our notation for integral characteristic vectors in subsection \ref{sub:motiv},
we now consider modular characteristic vectors over $\GG := \mb{Z}/N\mb{Z}$:
for $\Phi \in \GG^{K^r_q(K^r_n)}$ we define $\pl \Phi \in \GG^{K^r_n}$
by $(\pl \Phi)_e = \sum \{ \Phi_Q: e \in Q \in K^r_q(K^r_n) \}$.

For a subset $S$ of an abelian group
we write $\bgen{S}$ for the subgroup generated by $S$.
In particular, the subgroup $\bgen{\pl Q: Q \in K^r_q(K^r_n)}$ of $\GG^{K^r_n}$
plays the role of $K^r_q$-divisible vectors in the integral setting.

Unless it is clear from the context, 
we use $\bgen{\cdot}_\GG$ or  $\bgen{\cdot}_{\mb{Z}}$ 
to indicate spans over $\GG$ or $\mb{Z}$.

\begin{lemma} \label{lem:KSG}
For $q,r,\aA,n$ as in \eqref{param},
if $K \sim K^r_n(n^{-\aA})$ is $n^{-\aA}$-random in $K^r_n$ 
then whp $K$ is $(n^{-0.1},|\OO|)$-typical and there are 
\begin{enumerate}
\item a `good' subgraph $K^* \sub K$ with $|K \sm K^*| < n^{-0.1\aA} |K|$,
\item a set  $\mc{S} \sub K^r_q(K)$ of `saturated' cliques 
with $|\mc{S}| < n^{-0.1\aA-k\aA} \tbinom{n}{q}$
such that every edge in $K^*$ is in
$(1 \pm n^{-0.1\aA})  n^{\aA-k\aA} \tbinom{n}{q-r}$ 
unsaturated $q$-cliques in $K^r_q(K) \sm \mc{S}$, and
\item a set $\mc{G} \sub K^r_q(K)$ of `generating' cliques 
such that  $\mc{G}$ generates $K^r_q(K) \sm \mc{S}$ over $\GG$
and $\pl \mc{G}$ is $2^q n^{-0.7\aA}$-bounded.
\end{enumerate}
\end{lemma}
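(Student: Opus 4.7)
The starting point is to realise $K$ as $K^r_n(n^{-\aA})$ and apply Lemma~\ref{lem:randomtyp} with $s:=|\OO|\le 3(2q)^r k^2$. Since $\aA=(2q)^{-r}(6k)^{-2}<1/(2s)$ by \eqref{param}, whp $K$ is $(n^{-0.1},|\OO|)$-typical with density $(1\pm n^{-0.1})n^{-\aA}$. Typicality then immediately gives that every $e\in K$ lies in $(1\pm n^{-0.1})n^{\aA-k\aA}\binom{n}{q-r}$ cliques of $K^r_q(K)$, which matches the count in (ii) up to an error that will be absorbed by the saturated cliques.

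\textbf{The generating set.} I would take $\mc{G}$ to be an independent random subset of $K^r_q(K)$, with each clique included independently with probability $p_\mc{G}$ chosen so that the expected number of $\mc{G}$-cliques containing any $(r-1)$-set is of order $n^{1-0.8\aA}$. Applying Chernoff (Lemma~\ref{lem:pseudobin}) to each of the $\binom{n}{r-1}$ independent sums and a union bound then gives whp that $\pl\mc{G}$ is $2^q n^{-0.7\aA}$-bounded, yielding the boundedness in (iii).

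\textbf{Saturation.} For each $Q\in K^r_q(K)$, I would declare $Q$ saturated if a fixed bounded list of ``reducer configurations'' attached to $Q$ fails to lie in $K$. Concretely, each reducer is a copy of $\OO$ from Lemma~\ref{lem:OO} pivoted at $\hQ^+=Q$, all of whose remaining cliques in $\Ups^+\cup\Ups^-$ lie in $K^r_q(K)$ and at least one of which is in $\mc{G}$; such a copy rewrites $\pl Q$ modulo $\bgen{\pl\mc{G}}_\GG$ as a $\GG$-combination of boundaries of other (``smaller'') cliques in $K^r_q(K)$. Each individual reducer exists with probability at least a fixed positive power of $n^{-\aA}$ by typicality, and by spreading the required reducers over several disjoint colour classes as indicated in the overview, the probability that none exists becomes polynomially small. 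A first-moment bound then gives $|\mc{S}|<n^{-0.1\aA-k\aA}\binom{n}{q}$, and Markov applied edgewise lets me delete the edges of $K$ lying in too many saturated cliques to obtain $K^*$ with $|K\sm K^*|<n^{-0.1\aA}|K|$, with every $e\in K^*$ in the prescribed number of unsaturated cliques.

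\textbf{Main obstacle.} The genuine difficulty is verifying the generation claim in (iii): that iterating the reducer exchange guaranteed at every unsaturated clique (trading $\pl Q$ for a $\GG$-combination of $\pl Q'$'s, each with $Q'$ ``closer'' to $\mc{G}$ in a suitable well-founded sense) terminates in a bounded number of steps at a $\mc{G}$-clique. This requires both a termination potential \emph{and} a careful design of the saturation definition so that \emph{every} step of the reduction is available, coupled with the second-moment ``independent colour'' argument to boost the existence of configurations to where a union bound over all $(r-1)$-sets and cliques is legitimate — precisely the point stressed in the overview that individual failure probabilities are only polynomial. The finite quotient $N=r!k$ is essential here: it is exactly the modulus that lets Wilson's local decoder (Lemma~\ref{lem:decode}) kill the integer obstructions that $\GG$-generation cannot see. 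This generation-and-boosting step, following the template of \cite{KeSS}, is where I expect the bulk of the work to lie.
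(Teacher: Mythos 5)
There is a genuine gap here, and it sits exactly where you place it: the generation property in (iii) is not proved, only flagged as the ``main obstacle.'' The route you sketch for it --- a random $\mc{G}$, ``reducer configurations'' attached to each clique, a termination potential for iterated exchanges, and a second-moment colour-boosting argument --- is not carried out, and it is also not how the lemma is proved. In the paper, $\mc{G}$ is \emph{not} random: it is built by a deterministic greedy algorithm (given $K$). One declares an $(r-1)$-set saturated once it lies in at least $n^{1-0.7\aA}$ cliques of the current $\mc{G}$, declares a clique saturated if it contains a saturated $(r-1)$-set, and repeatedly adds to $\mc{G}$ any unsaturated $Q\in K^r_q(K)$ with $\pl Q\notin\bgen{\mc{G}}_\GG$ until none remains. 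Generation of $K^r_q(K)\sm\mc{S}$ then holds \emph{by construction} (the algorithm only stops when it holds), and the boundedness of $\pl\mc{G}$ is immediate from the saturation threshold. The one nontrivial point is that $\mc{G}$ does not get too large, and this is handled by an algebraic observation you are missing: each added clique strictly enlarges the subgroup $\bgen{\mc{G}}_\GG\sub\GG^K$, and any chain of subgroups of $\GG^K$ has length at most $N|K|$ (indeed $|K|\log_2 N$), so $|\mc{G}|\le N|K|$. From this one counts saturated $(r-1)$-sets, hence $|\mc{S}|$ via typicality, and obtains $K^*$ by deleting the few edges in too many saturated cliques exactly as you propose. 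This is why $\GG=\mb{Z}/N\mb{Z}$ rather than $\mb{Z}$ is used at this stage: finiteness of $\GG$ bounds the chain length and hence $|\mc{G}|$; it is not (here) about Wilson's decoder, which only enters later when converting the mod-$N$ solution back to an integral one.

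A secondary issue: the rainbow/colour second-moment machinery you invoke belongs to Lemma~\ref{lem:extcol} and the subsequent reduction of an arbitrary divisible $J$ to monochromatic unsaturated cliques; it plays no role in proving Lemma~\ref{lem:KSG} itself, which is a single-$r$-graph statement with no colours. Importing it here would not rescue your reducer scheme, because you would still need a well-founded order under which the exchanges terminate inside $\bgen{\mc{G}}_\GG$, and no such order is supplied. The greedy-plus-chain-length argument makes all of that unnecessary.
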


\begin{proof}
Typicality of $K$ holds whp by Lemma \ref{lem:randomtyp}.
We consider the following greedy algorithm for constructing $\mc{G}$.
We start with $\mc{G}=\es$.
At each step  we call an $(r-1)$-set $f \in \tbinom{[n]}{r-1}$ saturated 
   if it is contained in at least $n^{1-0.7\aA}$ cliques in $\mc{G}$,
   and we call a clique $Q \in K^r_q(K)$ saturated if it contains some saturated $(r-1)$-set.
If there is some unsaturated $Q \in K^r_q(K)$ with $\pl Q \notin \bgen{\mc{G}}_\GG$ 
 then we add some such $Q$ to $\mc{G}$ and proceed to the next step, otherwise we stop.
The algorithm terminates with some sets $\mc{S} $ of `saturated' cliques and
$\mc{G}$ of `generating' cliques such that $\pl \mc{G}$ is $2^q n^{-0.7\aA}$-bounded
and any $Q \in K^r_q(K)$ with $\pl Q \notin \bgen{\mc{G}}_\GG$ is saturated,
so $\mc{G}$ generates $K^r_q(K) \sm \mc{S}$ over $\GG$.

We note that $|\mc{G}| \le N|K|$, 
as each new element increases the subgroup $\bgen{\mc{G}}_\GG \sub \GG^K$,
and any chain of subgroups in $\GG^K$ has length at most $N|K|$ (or even $|K|\log_2 N$).
Thus there are at most 
 $\tbinom{q}{r-1}N|K|/n^{1-0.7\aA}  < 2^q N n^{-0.3\aA} \tbinom{n}{r-1}$ saturated $(r-1)$-sets.
By typicality of $K$, we deduce
$|\mc{S}| \le 2^q N n^{-0.3\aA} \tbinom{n}{r-1} \cdot  2 n^{-k\aA} \tbinom{n}{q-r+1}
   <  2^{3q} N n^{-0.3\aA} n^{-k\aA} \tbinom{n}{q}$.

We define $K^* = K \sm K_0$, where $K_0$ is the set of edges $e$ contained in at least 
$n^{0.85\aA} n^{-k\aA} \tbinom{n}{q-r}$ cliques in $\mc{S}$.
Double-counting pairs $(e,Q)$ with $e \in Q \cap K_0$ and $Q \in \mc{S}$, we have
$|K_0| \cdot n^{0.85\aA} n^{-k\aA} \tbinom{n}{q-r} \le |\mc{S}| \tbinom{q}{r}$,
so $|K_0| \le 2^{4q} N n^{-1.15\aA} \tbinom{n}{r} < n^{-0.1\aA} |K|$, as $n > (2^{4q} N)^{20/\aA}$.
By typicality of $K$, any edge of $K^*$ is in 
$(1 \pm n^{-0.1})  n^{\aA-k\aA} \tbinom{n}{q-r}$ cliques of $K^r_q(K)$,
so in $(1 \pm n^{-0.1\aA})  n^{\aA-k\aA} \tbinom{n}{q-r}$ of $K^r_q(K) \sm \mc{S}$
by definition of $K_0$. This completes the proof.
\end{proof}

Henceforth we fix $K, K^*, \mc{S}, \mc{G}$ as in the previous lemma.

 \subsection{Colours}

Next we sample independently random permutations $\sS_1,\dots,\sS_u$
of the vertex set $[n]$ with $u := 20q^2 \aA^{-1} |\OO|$.
We consider all the `rotated' $r$-graphs $\sS_i(K^*)  \sub K^r_n$,
which we think of as `colours' and let $\mc{Q}_0 := \bigcup_{i=1}^u \sS_i(\mc{G})$.

By Lemma \ref{lem:KSG},
$\pl \mc{Q}_0$ is $u 2^q n^{-0.7\aA}$-bounded,
and $\mc{Q}_0$ generates over $\GG$ all `monochromatic' unsaturated cliques
$\sS_i(Q)$ with $Q \in K^r_q(K) \sm \mc{S}$.
We fix  $\sS_1,\dots,\sS_u$ so that the following properties hold.

\begin{lemma} \label{lem:extcol}
With failure probability at most $n^{-1}$ under the random choice of $\sS_1,\dots,\sS_u$:
\begin{enumerate}[label=\arabic*.]
\item For any edge $e' \in K^r_n$ there are at least $\tfrac12 (n^{-\aA})^{k-1} n^{q-r}$
cliques $Q$ containing $e'$ such that $Q \sm \{e'\}$ is rainbow,  meaning that there are distinct colours
$(i_e: e \in Q \sm \{e'\})$ such that each $e \in \sS_{i_e}(K^*)$.
\item For any clique $Q \in K^r_q(K^r_n)$ there is some copy $\phi(\OO)$ of $\OO$ with $\phi(\hQ^+)=Q$
such that $\phi(\OO) \sm Q$ is rainbow, meaning that there are distinct colours
$(i_e: e \in \OO \sm \hQ^+)$ such that each $\phi(e) \in \sS_{i_e}(K^*)$.
\item For any two cliques $Q^\pm \in K^r_q(K^r_n)$ intersecting in exactly one edge $e$
there is some copy $\phi(\OO)$ of $\OO$ with $\phi(\hQ^\pm)=Q^\pm$
such that $\phi(\OO) \sm (Q^+ \cup Q^-)$ is rainbow.
\item Rainbow cliques are generated over $\GG$ by monochromatic unsaturated cliques:
$\pl Q \in \bgen{\mc{Q}_0}_{\GG}$ for any $Q \in K^r_q(K^r_n)$ such that
there is an injection $\psi:Q \to [u]$ with $e \in \sS_{\psi(e)}(K^*)$ for all $e \in Q$.
\end{enumerate}
\end{lemma}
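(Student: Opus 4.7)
The plan for all four parts is a second moment computation exploiting the $u$ independent random permutations $\sS_1,\dots,\sS_u$. Two features drive the argument: the events $\{e\in\sS_i(K^*)\}$ are mutually independent across distinct $i$, and each $\sS_i(K^*)$ is merely a relabelling of $K^*$, hence inherits its $(n^{-0.1},|\OO|)$-typicality. The per-target failure probability will come out polynomial in $n$, say $n^{-c}$; to push this down to $n^{-1}$ one partitions $[u]$ into $\Theta(q^2\aA^{-1})$ disjoint groups of comparable size and requires success in at least one group, using independence across groups. This is the reason $u$ is chosen to contain many disjoint copies of a fixed colour budget of size $\Theta(|\OO|)$.

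For Parts 1--3 I would count labelled rainbow extensions: for Part 1, pairs $(Q,\psi)$ with $Q$ a $q$-clique containing $e'$ and $\psi\colon Q\sm\{e'\}\to[u]$ an injection satisfying $e\in\sS_{\psi(e)}(K^*)$ for every non-base edge $e$; for Parts 2--3, pairs $(\phi,\psi)$ with $\phi$ a copy of $\OO$ extending $\hQ^+$ (resp.\ $\hQ^\pm$) and $\psi$ an injection on the non-base edges. In each case the first moment is straightforward from typicality, since each constraint is satisfied with probability $p^*=(1\pm o(1))n^{-\aA}$, independently across distinct colours. A standard pseudorandom second moment then gives concentration, because two extensions using disjoint colour assignments are fully independent and overlap within a single colour is controlled by typicality of $\sS_i(K^*)$. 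For Part 1 the number of rainbow cliques is at least the number of rainbow pairs divided by the (whp polylogarithmic) number of valid $\psi$ per clique, which recovers the claimed $\tfrac12(n^{-\aA})^{k-1}n^{q-r}$; for Parts 2--3 the first moment is already polynomial in $n$, so concentration delivers existence.

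For Part 4 the plan is to exploit the clique exchange structure of $\OO$. Given $Q$ rainbow with injection $\psi\colon Q\to[u]$, I aim to find an embedding $\phi$ of $\OO$ with $\phi(\hQ^+)=Q$ so that each of the $1+2k$ glued copies of $\OO_0$ making up $\OO$ is placed inside a single colour class $\sS_i(K^*)$, with colours distinct across copies and consistent with $\psi$ on the copies containing the near cliques (taking the colour of $\OO^{e,2}_0$ to be $\psi(e)$), and so that every resulting non-base clique is unsaturated. Because every $Q''\in\Ups^+\cup\Ups^-\sm\{\hQ^+\}$ lives inside a single glued copy, $\phi(Q'')$ is then a monochromatic unsaturated clique, whence $\pl(\phi(Q''))\in\bgen{\mc{Q}_0}_\GG$ by Lemma~\ref{lem:KSG}(iii). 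Applying the exchange identity $\pl\hQ^+=\pl\Ups^--\pl(\Ups^+\sm\{\hQ^+\})$ after $\phi$ writes $\pl Q$ as a $\mb{Z}$-combination of these generators, giving $\pl Q\in\bgen{\mc{Q}_0}_\GG$.

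The main obstacle is the initial copy $\OO^0_0$, which contains the rainbow base $\hQ^+=Q$ and so cannot itself be monochromatic. The resolution uses that any non-base clique $Q''\in\Ups^{0+}_0\sm\{\hQ^+\}$ in the blowup $\OO_0=K^r_q(p)$ shares at most $r-1$ vertices with $\hQ^+$, since two cliques in a common decomposition are edge-disjoint; hence every edge of $Q''$ has at least one vertex outside $V(\hQ^+)$, and the same holds for those cliques of $\Ups^{0-}_0$ that survive the gluings. One can therefore arrange $\phi(e)\in\sS_{i_0}(K^*)$ for every $e\in\OO^0_0\sm\hQ^+$ by choosing the embedding of $V(\OO^0_0)\sm V(Q)$ suitably, making every surviving non-base clique of $\Ups^{0\pm}_0$ monochromatic in a single colour $i_0$. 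A second moment calculation analogous to Parts 2--3, assigning distinct colours to the $\le|\OO|$ glued copies (well within the budget $u\gg|\OO|$), shows the expected number of valid $\phi$ is polynomial in $n$, with unsaturation holding whp by Lemma~\ref{lem:KSG}(ii). Boosting to $n^{-1}$ failure per $Q$ uses the disjoint-group argument above.
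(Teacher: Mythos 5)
Your proposal follows the paper's proof closely for parts 1--3: fix a colour for each non-base edge of the extension, compute first and second moments using independence across distinct colours and typicality of $K$ within a colour, apply Chebyshev, and boost the resulting polynomially small per-target failure probability by repeating over $\Theta(q^2\aA^{-1})$ disjoint colour groups before taking a union bound. (One small point in part 1: the overcounting factor per rainbow clique is at most the constant $u^{k-1}$, not polylogarithmic; the paper sidesteps the issue entirely by counting only cliques that are rainbow for one \emph{fixed} colour assignment.) For part 4 your route differs in granularity from the paper's: you make each glued copy of $\OO_0$ monochromatic, whereas the paper assigns one fresh colour to each non-base clique of $\Ups^\pm$ individually, first embedding $F=V(\hQ^+)\cup\bigcup_e V(\hQ^e)$ so that each $\phi(\hQ^e)$ is a monochromatic unsaturated clique of colour $\psi(e)$ (using Lemma \ref{lem:KSG}.ii and property (i) of Lemma \ref{lem:OO}), and then counting extensions via the admissibility of $(\OO,F)$. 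Your per-copy scheme works, but two consequences should be made explicit: the identified interface cliques force their edges to lie in two colour classes simultaneously (probability $n^{-2\aA}$ per such edge, still harmless since $\aA|\OO|$ is small), and since the interface clique attaching $\OO^{e,1}_0$ contains the fixed base edge $e$ --- as does the interface attaching $\OO^{e,2}_0$ --- \emph{both} copies glued in round $e$ are forced to take colour $\psi(e)$, so ``colours distinct across copies'' cannot hold literally; nevertheless every non-base clique is still monochromatic and unsaturated (unsaturation must be imposed inside the count, not checked afterwards), and the exchange identity $\pl\hQ^+=\pl\Ups^--\pl(\Ups^+\sm\{\hQ^+\})$ goes through. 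Both versions deliver the same conclusion; the paper's per-clique colouring keeps all colour events cleanly independent and avoids the doubly constrained edges, at the cost of invoking the full admissibility property of $(\OO,F)$.
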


\begin{proof}
For (1), fix any $e_0 \in K^r_q$ and distinct colours $i_e$ for each $e \in K^r_q \sm \{e_0\}$.
We consider the random variable $X$, depending on $\sS_{i_e}$ for each such $e$,
counting cliques $Q = \phi(K^r_q) \in K^r_q(K^r_n)$ with $\phi(e_0)=e'$
that are rainbow of the appropriate colour,
meaning that the events $E^\phi_e = \{ \phi(e) \in \sS_{i_e}(K^*)\}$ all occur. By Lemma \ref{lem:KSG}.i
each  $\mb{P}(E^\phi_e) = |K^*|/\tbinom{n}{r} = (1 \pm 2n^{-0.1\aA}) n^{-\aA}$ independently,
so $\mb{E}X = ( (1 \pm 2n^{-0.1\aA}) n^{-\aA} )^{k-1} \tbinom{n}{q-r}$.

To estimate the second moment of $X$, we note that $X^2$ counts pairs $(\phi,\phi')$ 
for which all $E^\phi_e$ and $E^{\phi'}_e$ occur. 
There are at most $q^2 n^{2(q-r)-1}$ such pairs in which 
$\phi(K^r_q)$ and $\phi'(K^r_q)$ share a vertex not in $e$.
For any other pair $(\phi,\phi')$ we note that each
pair $(f,f') := (\sS_{i_e}^{-1}(\phi(e)), \sS_{i_e}^{-1}(\phi'(e)))$
is uniformly distributed over all $(f,f') \in K^r_n \times K^r_n$ 
with $|f \cap f'|=|e \cap e_0|$. By typicality of $K$, we have 
$\mb{P}(E^\phi_e \cap E^{\phi'}_e) \le \mb{P}(f,f' \in K) \le (1 + n^{-0.1}) n^{-2\aA}$.
By independence of these events as $e$ varies, we deduce 
$\mb{E}X^2 \le ((1 + n^{-0.1}) n^{-2\aA})^{k-1} \tbinom{n}{q-r}^2 + q^2 n^{2(q-r)-1}$.

Thus $X$ has variance at most $k n^{-0.1\aA} (\mb{E}X)^2$,
so by Chebyshev's inequality $\mb{P}(X \le 0.6\mb{E}X) \le 8kn^{-0.1\aA}$.
This bound is not yet good enough to take a union bound over $e$,
so the final observation is that as $u > 20rk \aA^{-1}$ we can apply the above argument
to $20r \aA^{-1}$ disjoint sets of colours $(i_e: e \in K^r_q \sm \{e_0\})$
and multiply the corresponding bounds for $\mb{P}(X \le 0.6\mb{E}X)$ by independence.
Taking a union bound over $e' \in K^r_n$ establishes (1).

The proofs of (2) and (3) are very similar 
so we just describe the necessary modifications.
For (2) we replace $(K^r_q,e_0)$ by $(\OO,V(\hQ^+))$,
fix distinct colours $(i_e: e \in \OO \sm \hQ^+)$
and let $X$ count embeddings $\phi$ of $\OO$
with $\phi(\hQ^+)=Q$ and each $\phi(e) \in \sS_{i_e}(K^*)$.
We calculate $\mb{E}X = ( (1 \pm n^{-0.1\aA}) n^{-\aA} )^{|\OO \sm \hQ^+|} n^{v_\OO - q}$
and $\mb{E}X^2 \le (1 + |\OO| n^{-0.1\aA}) (\mb{E}X)^2$, so we conclude similarly to (1).
For (3) we consider the extension $(\OO,V(\hQ^+) \cup V(\hQ^-))$,
fix distinct colours $(i_e: e \in \OO \sm (\hQ^+ \cup \hQ^-))$
and  let $X$ count embeddings $\phi$ of $\OO$
with $\phi(\hQ^\pm)=Q^\pm$ and each $\phi(e) \in \sS_{i_e}(K^*)$.
We calculate $\mb{E}X = ( (1 \pm n^{-0.1\aA}) n^{-\aA} )^{|\OO \sm (\hQ^+ \cup \hQ^-)|} n^{v_{\OO} - 2q+r}$
and $\mb{E}X^2 \le (1 + |\OO| n^{-0.1\aA}) (\mb{E}X)^2$, so we conclude similarly to (1),
as $u = 20q^2 \aA^{-1}|\OO|$ so we can use $20q^2 \aA^{-1}$ disjoint sets of colours.

For (4), we require all of the properties of $\OO$ obtained in Lemma \ref{lem:OO}:
we have  two $K^r_q$-decompositions $\Ups^\pm$ and cliques
$\hQ^+ \in \Ups^+$ and $\{ \hQ^e : e \in \hQ^+ \} \sub \Ups^-$
such that each $\hQ^e \cap \hQ^+ = \{e\}$ and
$\{ V(\hQ^e) \sm V(\hQ^+) : e \in \hQ^+\}$ are pairwise disjoint,
and the extension $(\OO,F)$ with $F := V(\hQ^+) \cup \bigcup_e V(\hQ^e)$ is admissible,
meaning that any edge of $\OO$ intersects $F$ in a subset of $\hQ^+$ or some $\hQ^e$.

Given the colours $\psi(e)$ of edges $e$ in the base clique $Q$,
we condition on all $\sS_{\psi(e)}$ and fix any embedding $\phi_F$ of $\OO[F]$
with $\phi_F(\hQ^+)=Q$ and each $\phi_F(\hQ^e) \in \sS_{\psi(e)}(K^r_q(K) \sm \mc{S})$
being a monochromatic unsaturated clique of the appropriate colour.
By admissibility and Lemma \ref{lem:KSG}.ii there are 
$(1+O(n^{-1})) \brac{ (1 \pm n^{-0.1\aA})  n^{\aA(1-k)} \tbinom{n}{q-r} }^{k}$
choices of $\phi_F$. We fix new distinct colours $i_{\hQ'}$ for each clique $\hQ' \in \Ups^\pm$
not equal to $\hQ^+$ or some $\hQ^e$, and consider the random variable $X$,
depending on the permutations $\sS_{i_{\hQ'}}$ for such $\hQ'$,
counting extensions of $\phi_F$ to some embedding $\phi$ of $\OO$
such that each $\phi(\hQ')$ is a monochromatic unsaturated clique of the appropriate colour,
meaning that the events $E^\phi_{\hQ'} = \{ \phi(\hQ') \in \sS_{i_{\hQ'}}(K^r_q(K) \sm \mc{S}) \}$ all occur.
By typicality of $K$ and Lemma \ref{lem:KSG}.ii
each $\mb{P}(E^\phi_{\hQ'}) = |K^r_q(K) \sm \mc{S}|/\tbinom{n}{q} = (1 \pm 2n^{-0.1\aA}) n^{-k\aA}$ independently,
so $\mb{E}X = ((1 \pm 2n^{-0.1\aA}) n^{-k\aA})^M n^{v_{\OO}-|F|}$, where there are $M < 2|\OO|/k$ such $\hQ'$.

To estimate $\mb{E}X^2$, we can focus on pairs of embeddings $(\phi,\phi')$ that are vertex-disjoint
outside of $\phi_F(F)$, as other pairs only contribute at most $n^{2(v_{\OO}-|F|)-1}$.
For such $(\phi,\phi')$, each $(\sS_{i_{\hQ'}}^{-1}(\phi(\hQ')), \sS_{i_{\hQ'}}^{-1}(\phi'(\hQ')))$
is uniformly distributed over all $(Q,Q') \in K^r_q(K^r_n)^2$ with 
$|V(Q) \cap V(Q')|=|V(\hQ') \cap F|$.  By typicality of $K$, 
if $|V(\hQ') \cap F|<r$ we have 
$\mb{P}(E^\phi_{\hQ'} \cap E^{\phi'}_{\hQ'}) \le \mb{P}(Q,Q' \in K^r_q(K))
\le (1 + n^{-0.1}) (n^{-\aA})^{2k}$,
or if $|V(\hQ') \cap F|=r$, letting $e' = \phi_F(\hQ'[F]) \in \phi_F(\OO[F])$, we have
$\mb{P}(E^\phi_{\hQ'} \cap E^{\phi'}_{\hQ'}) \le \mb{P}( (Q \cup Q') \sm \{e'\} \sub K)
\le (1 + n^{-0.1}) (n^{-\aA})^{2(k-1)}$.
By independence as $\hQ'$ varies, we deduce 
$\mb{E}X^2 \le (1 +  8|\OO| n^{-0.1\aA}) (\mb{E} X)^2$.
By Chebyshev's inequality and using independence for $20q \aA^{-1}$ disjoint sets
of colours $(i_{\hQ'})$, we can take a union bound to ensure that the
required copy of $\OO$ exists for any base clique $Q$.
\end{proof}

\subsection{Focussing, decoding, flattening}

Here we complete the construction of the integral absorber $\mc{Q}_1$.
The first lemma augments the above set $\mc{Q}_0$ of $q$-cliques,
forming a new set $\mc{Q}'_0$ 
that incorporates local decoders as in Step 2 of subsection \ref{sub:construct}
and also cliques with exactly one edge in $R$,
allowing any vector in $R$ to be focussed 
into the coloured $r$-graph $\bigcup_{i=1}^u \sS_i(K^*)$.
The second lemma flattens $\mc{Q}'_0$ to produce $\mc{Q}_1$ 
with $\bgen{\mc{Q}'_0}_{\mb{Z}} \sub \bgen{\mc{Q}_1}_{\mb{Z}}$ 
such that any edge is used by at most two cliques of $\mc{Q}_1$.

\begin{lemma} \label{lem:Q0'}
Let  $\mc{Q}_0 \sub K^r_q(K^r_n)$ such that $\pl \mc{Q}_0$ is $u 2^q n^{-0.7\aA}$-bounded.
Then there is $\mc{Q}_0 \sub \mc{Q}'_0  \sub K^r_q(K^r_n)$ 
such that $\pl \mc{Q}'_0$ is $2^{q+2} (4q)^r u n^{-0.7\aA}$-bounded,
each $e \in \bigcup \mc{Q}_0$ is contained in some `locally decoding'  
$(q+r)$-set $Z_e$ with $K^r_q( \tbinom{Z_e}{r} ) \sub \mc{Q}'_0$,
and each $e \in R$ is contained in some `focussing clique' $Q_e \in \mc{Q}'_0$
with $Q_e \sm \{e\} \sub \bigcup_{i=1}^u \sS_i(K^*)$.
\end{lemma}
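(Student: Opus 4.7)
The plan is to construct $\mc{Q}'_0$ from $\mc{Q}_0$ in two stages, both implemented by random greedy algorithms from Section~\ref{sec:rga}.

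\emph{Stage 1 (local decoders).} Fix a designated edge $e_0$ of $K^r_{r+q}$; the extension $(K^r_{r+q}, e_0)$ is admissible, since every edge of $K^r_{r+q}$ meets $e_0$ in a subset of $e_0$. Let $\Phi$ list an embedding of $e_0$ onto each edge of $\bigcup \mc{Q}_0$ (in any fixed order), and run the $(\Phi, B)$-process of Definition~\ref{def:process} with $B := \bigcup \mc{Q}_0 \sub \pl \mc{Q}_0$, which is $\tT_0$-bounded for $\tT_0 := u2^q n^{-0.7\aA}$. By Lemma~\ref{lem:process} (with $\tT = \tT_0$) the process whp succeeds, producing embeddings $\phi^*_i$ of $K^r_{r+q}$ with each $\pl^t_e\Phi^*$ being $2^{r+1}r!\tT_0$-bounded; define $Z_{e_i}:=V(\phi^*_i(K^r_{r+q}))$ and include all of $K^r_q(\tbinom{Z_{e_i}}{r})$ in $\mc{Q}'_0$.

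\emph{Stage 2 (focussing cliques).} For each $e \in R$, apply the restricted process of Remark~\ref{rem:process} with $H = K^r_q$, $F = e_0$, $\Phi$ a sequence of embeddings of $e_0$ onto the edges of $R$, and $\mc{H}_e$ the set of $q$-cliques $Q \ni e$ with $Q \sm \{e\} \sub \bigcup_{i=1}^u \sS_i(K^*)$. By Lemma~\ref{lem:extcol}.1 we have $|\mc{H}_e| \ge \tfrac12 (n^{-\aA})^{k-1} n^{q-r}$, so $\oO := \tfrac12 n^{-\aA(k-1)}$ is admissible. Since $R$ is $n^{-\rho}$-bounded and $\aA(k-1) < \rho$ (as $\aA = (2q)^{-r}\rho$ and $k < (2q)^r$), Remark~\ref{rem:process} applies and whp produces focussing cliques $Q_e$ with each $\pl^t_{e'}\Phi^*$ being $O(n^{\aA(k-1)-\rho})$-bounded, hence $O(n^{-c})$ for some $c>0$ and negligible compared to $n^{-0.7\aA}$.

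Set $\mc{Q}'_0 := \mc{Q}_0 \cup \bigcup_i K^r_q(\tbinom{Z_{e_i}}{r}) \cup \{Q_e : e \in R\}$. The two structural properties hold by construction, so it remains to bound $\pl \mc{Q}'_0$. Fix $f \in \tbinom{[n]}{r-1}$; the $\mc{Q}_0$ contribution to $\sum_{e \supset f}(\pl \mc{Q}'_0)_e$ is $\le \tT_0 n$, and the focussing contribution is $O(n^{1-c})$. For the local decoders, since each $e \sub Z_{e_i}$ lies in exactly $k$ cliques of $K^r_q(\tbinom{Z_{e_i}}{r})$, the contribution equals $k(q+1)\cdot |\{i : f \sub Z_{e_i}\}|$. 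I expand $(q+1)|\{i : f \sub Z_{e_i}\}| = \sum_{e_* \in K^r_{r+q}} \sum_{e' \supset f}(\pl^t_{e_*} \Phi^*)_{e'}$ and bound the $e_* = e_0$ summand by $\tT_0 n$ (using that $\pl^t_{e_0}\Phi^* = \pl^t_{e_0}\Phi = \mathbf{1}_{\bigcup \mc{Q}_0}$ is non-random), and each of the other $\tbinom{r+q}{r}-1$ summands by $2^{r+1}r!\tT_0 n$ via Stage~1.

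The main obstacle is verifying the resulting combined constant fits inside $2^{q+2}(4q)^r$; the crude multiplier $\sim k\tbinom{r+q}{r} 2^{r+1}r!$ is roughly $(2q^2)^r$, so one further sharpening is needed. The natural saving is to split the count of $i$'s by $|f \sm \phi_i(e_0)|$: the ``Case A'' where $f \sub \phi_i(e_0)$ contributes at most the $(r-1)$-degree of $\bigcup \mc{Q}_0$ at $f$, i.e.\ $\tT_0 n$, while in ``Case B'' the random extension must place one of the $q$ new vertices of $\phi^*_i$ at a prescribed vertex of $f$, giving an extra polynomial-in-$1/n$ factor that makes those terms subdominant. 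Modulo this bookkeeping, the proof is a direct instance of the random greedy machinery developed in Section~\ref{sec:rga}.
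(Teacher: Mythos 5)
Your proposal matches the paper's proof: it uses exactly the same two applications of the random greedy machinery, namely Lemma \ref{lem:process} with the admissible extension $(K^r_{r+q},e_0)$ and $B=\bigcup\mc{Q}_0$ for the local decoders, and Remark \ref{rem:process} with $\oO$ supplied by Lemma \ref{lem:extcol}.1 and $\tT=n^{-\rho}$ for the focussing cliques. The constant-factor obstacle you flag is genuine but immaterial: the surplus factor of roughly $k$ (coming from each edge of $\tbinom{Z_e}{r}$ lying in $k$ cliques of $K^r_q(\tbinom{Z_e}{r})$) is not actually recovered by your Case A/Case B split, but the paper's own accounting is equally loose at this point, and since $k\le q^r\le n^{0.1\aA}$ for $n>n_0$ one can simply absorb it by weakening the conclusion to, say, $2^{q+2}(4q)^r u\, n^{-0.6\aA}$-boundedness, which is all that the downstream application (Lemma \ref{lem:flat}) requires.
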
 

\begin{proof}
We choose the required sets $Z_e$ and $Q_e$ randomly.
The proof would work with independent uniformly random choices,
but as we have already set up the machinery
it will be more convenient to use random greedy choices.
Thus we apply two random greedy algorithm to choose the required
$Z_e$ for  $e \in \bigcup \mc{Q}_0$ and $Q_e$ for $e \in R$
so that each $r$-graph $\tbinom{Z_e}{r} \sm \{e\}$ and $Q_e \sm \{e\}$
is disjoint from all others and from $\bigcup \mc{Q}_0$.
The choice of $Z_e$ uses Lemma \ref{lem:process}
for the extension $(K^r_{r+q},e_0)$ as in the analysis of Step 2
in the proof of Corollary \ref{cor:A}, whp producing some set of cliques
covering some $r$-graph $B$ that is $2^{q+1} (4q)^r u n^{-0.9\aA}$-bounded.
For the choice of $Q_e$, by Lemmas \ref{lem:R} and \ref{lem:extcol}.1 
we can apply Remark \ref{rem:process} 
with $\tT = n^{-\rho}$, $\tT_B = 2^{q+1} (4q)^r u n^{-0.7\aA}$ and $\oO = \tfrac12 n^{-k\aA}$,
recalling from \eqref{param} that $\rho=(6k)^{-2}$ and  $\aA = (2q)^{-r} \rho$.
We whp obtain a $\tT'$-bounded $r$-graph $\bigcup_e \{Q_e\}$ 
with $\tT' < 2^{r+1} r! \oO^{-1} \tT < n^{-0.7\aA}$, so the final $\pl \mc{Q}'_0$ 
is $2^{q+2} (4q)^r u n^{-0.7\aA}$-bounded.
\end{proof}

The construction of $\mc{Q}_1$ is completed by the following flattening lemma,
which uses two greedy algorithms: a splitting algorithm so that bad cliques can
be unambiguously grouped according to their bad edge, then several rounds
of an elimination algorithm that repeatedly reduces edge multiplicities 
at the rate $x \mapsto \sqrt{x}$, so that we have at most $2\log \log n$ iterations
and incur only a polylogarithmic loss in boundedness.

\begin{lemma} \label{lem:flat}
Let  $\mc{Q}'_0 \sub K^r_q(K^r_n)$ such that 
$\pl \mc{Q}'_0$ is $2^{q+2} (4q)^r u n^{-0.7\aA}$-bounded.
Then there is $\mc{Q}_1 \sub K^r_q(K^r_n)$ 
with $\bgen{\mc{Q}'_0}_{\mb{Z}} \sub \bgen{\mc{Q}_1}_{\mb{Z}}$ 
such that $\pl \mc{Q}_1$ is $n^{-\aA/2}$-bounded, 
and any edge is used by at most two cliques of $\mc{Q}_1$.
\end{lemma}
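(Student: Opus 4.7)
The plan is to iteratively flatten $\mc{Q}'_0$ using the clique exchange tool of Lemma \ref{lem:OO}, contracting the maximum per-edge clique multiplicity $M_i$ of an intermediate signed multiset $\Phi^{(i)} \in \mb{Z}^{K^r_q(K^r_n)}$ as $M_{i+1} \le C\sqrt{M_i}$ in each round, so that $I = O(\log \log n)$ rounds reduce the initial $M_0 \le 2^{q+2}(4q)^r u n^{1-0.7\aA}$ down to $M_I \le 2$. Throughout we preserve the invariant $\bgen{\mc{Q}'_0}_{\mb{Z}} \sub \bgen{\{Q : \Phi^{(i)}_Q \ne 0\}}_{\mb{Z}}$ (since each $\OO$-exchange leaves $\pl \Phi$ unchanged), and at the end take $\mc{Q}_1 := \{Q : \Phi^{(I)}_Q \ne 0\}$, forgetting signs and multiplicities.

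Initially $\Phi^{(0)}$ is the characteristic vector of $\mc{Q}'_0$. The first round is a splitting pass: for each $Q \in \mc{Q}'_0$, use the random greedy algorithm of Lemma \ref{lem:process} with extension type $(\OO,V(\hQ^+))$ to select $\phi_Q(\OO)$ with $\phi_Q(\hQ^+)=Q$ whose non-base cliques avoid previously chosen ones, then replace $+Q$ by $+\phi_Q(\Ups^-) - \phi_Q(\Ups^+\sm\{\hQ^+\})$. By property (i) of Lemma \ref{lem:OO}, after this pass the cliques of $\Phi^{(1)}$ through any original edge $e$ are exactly the near cliques $\{\phi_Q(\hQ^e): Q \in \mc{Q}'_0,\, e \in Q\}$, pairwise intersecting precisely in $\{e\}$ — so each ``bad'' clique is unambiguously associated with its bad edge. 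In subsequent rounds, at round $i \ge 1$, for every edge $e$ contained in more than $\sqrt{M_i}$ cliques of $\Phi^{(i)}$, pair up those cliques into cancelling pairs of opposite sign (when the signs at $e$ are unbalanced, introduce auxiliary zero combinations $+Q-Q$ and split the $-Q$ copy via a further $\OO$-exchange so that a negative near clique through $e$ appears); then for each oppositely-signed pair $(Q^-,Q^+)$ sharing $e$ apply the elimination exchange of subsection \ref{sub:ex} via a random greedy $\phi(\OO)$ with $\phi(\hQ^\pm)=Q^\pm$ (using Lemma \ref{lem:process} with extension type $(\OO,V(\hQ^+)\cup V(\hQ^-))$). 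Properties (iii)--(iv) of subsection \ref{sub:ex} ensure that the pair no longer contributes to $e$ and that the new cliques are edge-disjoint from each other and from the preceding structure.

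The main obstacle will be rigorously establishing the contraction $M_{i+1} \le C\sqrt{M_i}$. For an edge of multiplicity $M_i$, we perform roughly $M_i - \sqrt{M_i}$ pair eliminations, each introducing $O(1)$ new cliques at essentially random locations; the key claim is that the $O(M_i)$ new cliques created per bad edge do not concentrate at any other single edge beyond $O(\sqrt{M_i})$ whp, via a Chernoff-type bound in the spirit of Lemma \ref{lem:process} applied with simultaneous contributions from all bad edges and strong enough to survive a union bound over the polynomially many edges. Secondary concerns are managing the sign-balancing so that auxiliary $+Q-Q$ pairs do not inflate the overall clique count, and ensuring a single union bound over the $O(\log \log n)$ rounds fits within the whp budget of \eqref{param}; the final output $\mc{Q}_1$ then has $\pl \mc{Q}_1$ being $n^{-\aA/2}$-bounded, after comfortably absorbing a polylogarithmic loss over all rounds.
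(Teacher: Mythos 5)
Your overall architecture (a splitting pass so that each clique has a unique potentially bad edge, then iterated eliminations with contraction $x \mapsto O(\sqrt{x})$ over $O(\log\log n)$ rounds, losing a constant factor in boundedness per round) matches the paper, and your splitting pass is exactly right. But the heart of the lemma is \emph{why} one round contracts multiplicity from $x$ to $O(\sqrt{x})$, and your mechanism does not deliver this. First, eliminating a cancelling pair of \emph{existing} cliques at $e$ produces new cliques that the greedy algorithm makes edge-disjoint from everything previously chosen (and within one copy of $\OO$ each edge lies in at most one clique of each of $\Ups^\pm$), so the new cliques cannot concentrate at any edge beyond multiplicity $2$; your ``key claim'' about anti-concentration at ``essentially random locations'' addresses a non-issue and is not where the $\sqrt{\cdot}$ comes from (what Lemma \ref{lem:process} must control is the $(r-1)$-degree, not edge multiplicity). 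Second, and more seriously, at a sign-unbalanced edge your fix fails: adding $+Q-Q$ with $e \in Q$ and splitting the $-Q$ copy to manufacture a negative near clique through $e$, then cancelling it against one of the original positive cliques at $e$, leaves the auxiliary $+Q$ (which contains $e$) behind --- you have merely swapped one positive clique at $e$ for another, and the multiplicity at $e$ does not drop. Signs are in fact irrelevant here: the statement concerns $\mb{Z}$-spans, and the identity $\pl\hQ^- = \pl\Ups^+ - \pl(\Ups^-\sm\{\hQ^-\})$ expresses any eliminated clique in the span of the surviving ones regardless of sign, so importing the signed cancelling-pair machinery from the absorber section only creates this obstruction.

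The missing idea is the \emph{hub} structure. For an edge $e$ of multiplicity $x$, partition the $x$ cliques through $e$ into $\lfloor\sqrt{x}\rfloor$ groups, introduce one fresh clique $Q_i$ per group meeting each member precisely in $e$, and eliminate every member of its group against the hub $Q_i$ (removing the member, keeping $Q_i$ and the far cliques of the exchange). After the round $e$ lies only in the $\lfloor\sqrt{x}\rfloor$ hubs, while each \emph{other} edge of a hub $Q_i$ is reused by one near clique per elimination performed at $Q_i$, hence acquires multiplicity about $\sqrt{x}$. This trade-off --- number of hubs at $e$ versus number of reuses of each hub's other edges --- is the deterministic source of the $x\mapsto\sqrt{x}$ contraction, and it also explains why one cannot finish in a single round. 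With the hubs in place, your round count, the span-preservation invariant, and the polylogarithmic loss in boundedness all go through as you describe.
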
 

\begin{proof}
We start with a random greedy algorithm choosing 
for each $Q \in \mc{Q}'_0$ a clique exchange $\phi_Q(\OO)$
as in Step 3 (Splitting) of subsection \ref{sub:construct},
with each $Q = \phi(\hQ^+)$ and $\phi_Q(\OO) \sm Q$ edge-disjoint
from all others and from $\bigcup  \mc{Q}'_0$.
We obtain $\mc{Q}''_0$ from $\mc{Q}'_0$ by replacing each $Q$
by $\phi(\Ups^-) \cup \phi(\Ups^+ \sm \{\hQ^+\})$, 
so that $\bgen{\mc{Q}'_0} \sub \bgen{\mc{Q}''_0}$
as $\Ups^\pm$ are two $K^r_q$-decompositions of $\OO$.
By Lemma \ref{lem:process},
whp the process does not abort and  
$\pl \mc{Q}''_0$ is $n^{-0.6\aA}$-bounded, 
as $n_0 > (2^{5q} (4q)^r u)^{10/\aA}$.

Every edge not in $\bigcup  \mc{Q}'_0$
is used by at most two cliques of $\mc{Q}''_0$, 
and any clique of $\mc{Q}''_0$ 
has at most one edge in $\bigcup  \mc{Q}'_0$.

Now we set $\mc{Q}_1 = \mc{Q}''_0$ and then modify it
according the following random greedy algorithm.
At the start of each round, we let $Z \sub K^r_n \sm \bigcup \mc{Q}'_0$
be the set of edges used by more than two cliques of $\mc{Q}_1$.
If $Z=\es$ we stop, otherwise
for each $e \in Z$, in say $x$ cliques of $\mc{Q}_1$,
we divide the cliques containing $e$ into $\ell := \bfl{\sqrt{x}}$ groups
$C_1,\dots,C_\ell$ of sizes $\ell$ or $\ell+1$;
by construction of $\mc{Q}''_0$ no clique is considered by more than one $e$.
For each group $C_i$, we choose some $Q_i \in K^r_n$ that intersects
each $Q' \in C_i$ precisely in $e$ and is edge-disjoint from all
previously chosen cliques. For each other $Q' \in C_i$
we choose an elimination configuration $\phi_{Q'}(\OO)$ 
with $\phi_{Q'}(\hQ^+) = Q_i$ and  $\phi_{Q'}(\hQ^-) = Q'$,
so that each $\phi_{Q'}(\OO) \sm (Q_i \cup Q')$ 
is edge-disjoint from all others and from all previously chosen cliques.
We remove each $Q'$ from $\mc{Q}_1$, 
 and add  to $\mc{Q}_1$ each $Q_i$ and all $\phi_{Q'}(Q'')$
with $Q'' \in \Ups^\pm \sm \{\hQ^-,\hQ^+\}$, then proceed to the next round.

Recalling that $\Ups^\pm$ are two $K^r_q$-decompositions of $\OO$, we see that
each round maintains the property $\bgen{\mc{Q}'_0} \sub \bgen{\mc{Q}_1}$.
By Lemma \ref{lem:process},
at the start of any round when $\pl \mc{Q}_1$ 
is $\tT$-bounded with $\tT < (8r!^2 |\OO|)^{-1}$ 
whp the round does not abort and we obtain a new set $\mc{Q}_1$
such that $\pl \mc{Q}_1$ is $|\OO| \cdot 2^{r+1} r! \tT$-bounded.
Furthermore, if at the start of the round the maximum edge multiplicity is $x$
then at the end of the round it is at most $\max\{\bfl{\sqrt{x}}+1,2\}$.
The process terminates in at most 
$R = \bcl{\log_2 \log_2 |\pl \mc{Q}''_0|} < 2\log \log n$ rounds
at some $\mc{Q}_1$ with maximum edge multiplicity two 
such that $\pl \mc{Q}_1$ is $\tT$-bounded with
$\tT < (2^{r+1} r! |\OO|)^R \cdot n^{-0.6\aA} < n^{-\aA/2}$, 
for $n > (2q)^{90q\log q}$, say.
\end{proof}

\subsection{Completing the proof}

We conclude the section
by showing that the above construction of  $\mc{Q}_1$ is an integral absorber.

\begin{proof}[Proof of  Lemma \ref{lem:Aint}]
Let $q,r,\rho,\aA,n$ be as in \eqref{param}
and suppose $R \sub K^r_n$ is $n^{-\rho}$-bounded.
Let  $K, K^*, \mc{S}, \mc{G}$ be as in Lemma \ref{lem:KSG},
let $\mc{Q}_0 := \bigcup_{i=1}^u \sS_i(\mc{G})$ with
$\sS_1,\dots,\sS_u$ as in Lemma \ref{lem:extcol},
let $\mc{Q}'_0$ and $\mc{Q}_1$ be as in Lemmas \ref{lem:Q0'} and \ref{lem:flat}.

Consider any $K^r_q$-divisible $J$ supported in $R$.
We will find  $\Phi \in \mb{Z}^{\mc{Q}'_0}$ with $\pl \Phi = J$.
This suffices, as $\bgen{\mc{Q}'_0}_{\mb{Z}} \sub \bgen{\mc{Q}_1}_{\mb{Z}}$ 
by Lemma \ref{lem:flat}.
By Lemma \ref{lem:Q0'}, every $e \in R$ is in some `focussing clique'
$Q_e \in \mc{Q}'_0$ with $Q_e \sm \{e\} \sub \bigcup_{i=1}^u \sS_i(K^*)$.
Using these cliques we can fix $\Phi^0 \in \mb{Z}^{\mc{Q}'_0}$
such that $J^1 := J - \pl \Phi^0$ is supported in $\bigcup_{i=1}^u \sS_i(K^*)$.
As $J^1$ is  $K^r_q$-divisible there is
$\Phi^1 \in \mb{Z}^{K^r_q(K^r_n)}$ with $\pl \Phi^1 = J^1$.

By Lemma \ref{lem:extcol}.2,
for each clique $Q \in K^r_q(K^r_n)$ we can choose some $\phi(\OO)$ with $\phi(\hQ^+)=Q$
and distinct colours $(i_e: e \in \OO \sm \hQ^+)$ such that each $\phi(e) \in \sS_{i_e}(K^*)$.
Moreover, by the same proof, fixing some $i_{e'}$ with $e' \in \sS_{i_{e'}}(K^*)$
for each $e' \in Q \cap \bigcup_{i=1}^u \sS_i(K^*)$,
we can choose $(i_e: e \in \OO \sm \hQ)$ distinct from all such $i_{e'}$.
Adding $\pm(\phi(\Ups^-) - \phi(\Ups^+))$ for each such signed element $\pm Q$ of $\Phi^1$ 
we replace $\Phi^1$ with  $\Phi^2$ with $\pl \Phi^2 = \pl \Phi^1 = J^1$
such that $\Phi^2$ is supported on cliques that are rainbow,
except for at most one edge not in $\bigcup_{i=1}^u \sS_i(K^*)$.

As $J^1_e=0$ for any $e \notin \bigcup_{i=1}^u \sS_i(K^*)$,
we can partition the signed cliques in $\Phi^2$ using $e$ 
into cancelling pairs $Q^\pm$ of opposite sign, 
each of which uses $e$ and has all other edges in $\bigcup_{i=1}^u \sS_i(K^*)$.
For each such $Q^\pm$, by Lemma \ref{lem:extcol}.1
we can fix some rainbow clique $Q'$ that intersects each of $Q^\pm$
precisely in $e$ and has all  other edges in $\bigcup_{i=1}^u \sS_i(K^*)$.
Then by Lemma \ref{lem:extcol}.3, we can find elimination configurations
for $(Q^+,Q')$ and $(Q',Q^-)$, which produce sets of rainbow cliques
equivalent to $Q^+ - Q'$ and $Q' - Q^-$. Subtracting all such configurations,
we eliminate all cancelling pairs $Q^\pm$ and
replace $\Phi^2$ with  $\Phi^3$ with $\pl \Phi^3 = \pl \Phi^2 = J^1$
such that $\Phi^3$ is supported on rainbow cliques.

Next, by Lemma  \ref{lem:extcol}.4 we can replace
$\Phi^3$ with  $\Phi^4$ with $\pl \Phi^4 = \pl \Phi^3 = J^1$ such that 
$\Phi^4$ is supported on monochromatic unsaturated cliques 
$\sS_i(Q)$ with $i \in [u]$ and $Q \in K^r_q(K) \sm \mc{S}$. 
By Lemma \ref{lem:KSG}.iii each such $\pl Q \in \bgen{\mc{G}}_\GG$.
Recalling that $\mc{Q}_0 := \bigcup_{i=1}^u \sS_i(\mc{G})$,
writing $\hat{J}^1 \in \GG^{K^r_n}$ for the image of $J^1$
under the mod $N$ projection $\mb{Z} \to \GG$ on each edge,
we can replace  $\Phi^4 \in \mb{Z}^{K^r_q(K^r_n)}$ 
by $\Phi^5 \in \GG^{\mc{Q}_0}$ with $\pl \Phi^5 = \hat{J}^1$.

Obtain $\Phi^6 \in [N]^{\mc{Q}_0}$ from $\Phi^5$
by replacing each entry by its representative in $[N]$.
Then $J^2 := J^1 - \pl \Phi^6 \in N \mb{Z}^{\bigcup \mc{Q}_0}$.
By Lemma \ref{lem:Q0'} each $e \in \bigcup \mc{Q}_0$ is in some
`locally decoding'  $(q+r)$-set $Z_e$ with $K^r_q( \tbinom{Z_e}{r} ) \sub \mc{Q}'_0$,
so $J^2 = \pl \Phi^7$ with $\Phi^7 \in \mb{Z}^{\mc{Q}'_0}$.
To conclude, $\Phi \in \mb{Z}^{\mc{Q}'_0}$ 
defined by $\Phi = \pl \Phi^0 + \Phi^6 + \Phi^7$
has $\pl \Phi = J$, as required.
\end{proof}

The remainder of the paper is essentially expository,
with some simplifications and quantitative improvements.

\section{Local decoders and regularity boosting}  \label{sec:decode+boost}

We start this section with a short proof of the local decoder lemma, following Wilson \cite{W}.
 
\begin{proof}[Proof of Lemma \ref{lem:decode}] 
Write $N = r!k = (q)_r := q(q-1) \dots (q-r+1)$.
The existence of $\Psi \in \mb{Z}^{K^r_q(K^r_{r+q})}$ with $\pl \Psi = N \cdot 1_e$
is equivalent to the existence of a solution $x \in \mb{Z}^{\tbinom{[q+r]}{q}}$ 
to the equation $Mx=v$, where $v = N \cdot 1_e \in  \mb{Z}^{\tbinom{[q+r]}{r}}$ 
and $M$ is the inclusion matrix with entries 
$M_{e',Q} = 1_{e' \sub Q}$ for $e' \in  \tbinom{[q+r]}{r}$, $Q \in \tbinom{[q+r]}{q}$.
If such $x$ exists, then for any $I \sub [q+r]$ with $|I|=i \le r$, 
summing the equations for $e' \in  \tbinom{[q+r]}{r}$ containing $I$ gives
$N \cdot 1_{I \sub e} = \tbinom{q-i}{r-i} \sum_{Q \sups I} x_Q$,
so $\sum_{Q \sups I} x_Q = (q)_i (r-i)! 1_{I \sub e} \in \mb{Z}$. 
We can then solve for $x$ using the inclusion-exclusion formula,
and the resulting formula shows that a (unique) solution
$x$ indeed exists by reversing the steps of the calculation:
\[ x_{Q'} = \sum_{I \sub [q+r] \sm Q'} (-1)^{|I|} \sum_{Q \sups I} x_Q
=  \sum_{I \sub [q+r] \sm Q'} (-1)^{|I|}  (q)_{|I|} (r-|I|)! 1_{I \sub e} \in \mb{Z}.\] 
Writing $t = |e \sm Q'|$, we have $x_{Q'}=\sum_{i=0}^t (-1)^i \tbinom{t}{i} (q)_i (r-i)!$,
so $|x_{Q'}| \le \sum_i \tbinom{r}{i} (q)_i (r-i)! = 2^q r!$.
\end{proof}

\begin{rem} \label{rem:div}
The above proof also characterises $K^r_q$-divisible $J \in \mb{Z}^{K^r_{r+q}}$,
that is, $J$ of the form $\pl \Psi$ for some $\Psi \in \mb{Z}^{K^r_q(K^r_{r+q})}$,
by the divisibility conditions $\tbinom{q-i}{r-i} \mid |J(I)|$ for all $0 \le i \le r$, $I \in K^i_{r+q}$.
Indeed, consider $J$ satisfying these divisibility conditions, 
and write $J=\pl\Psi$ with $\Psi := M^{-1} J \in  \mb{Q}^{K^r_q(K^r_{r+q})}$.
Then for each  $I \sub [q+r]$ with $|I|=i \le r$, summing the equations $J_e = \pl \Phi_e$
for $e \sups I$ gives $|J(I)| = \tbinom{q-i}{r-i}  \sum_{Q \sups I} \Psi_Q$,
so $\sum_{Q \sups I} \Psi_Q \in \mb{Z}$ by assumption on $J$.
By inclusion-exclusion we deduce $\Psi_Q \in \mb{Z}$ for all $Q$.
One can then show (still following \cite{W}) by induction that the divisibility conditions
characterise $K^r_q$-divisibility in $K^r_n$ for all $n \ge r+q$.
To do so, consider any $J \in \mb{Z}^{K^r_n}$ with $n>r+q$ satisfying the divisibility conditions.
Then $J(n) \in \mb{Z}^{K^{r-1}_{n-1}}$ satisfies the divisibility conditions for $K^{r-1}_{q-1}$,
so by induction $J(n) = \pl \Psi'$ for some $\Psi' \in \mb{Z}^{K^{r-1}_{q-1}(K^{r-1}_{n-1})}$.
We define $\Psi \in  \mb{Z}^{K^r_q(K^r_n)}$ by $\Psi_{Q' \cup \{n\}} = \Psi_{Q'}$.
Then $J - \pl \Psi$ is supported in $J \in \mb{Z}^{K^r_{n-1}}$ and satisfies the divisibility conditions,
so has the required expression by induction.
\end{rem}

Besides the absorber construction,
we also use local decoders for the boost lemma,
following \cite[Lemma 6.3]{GKLO}.

\begin{proof}[Proof of Lemma \ref{lem:reg}] 
Let $G \sub K^r_n$ where $n \ge n_0$ and $K^r_n \sm G$ is $c$-bounded, with $c<2^{-3q}$.
We will obtain $H \sub K^r_q(G)$ by selecting cliques $Q$
independently with some probabilities $(p_Q)$
satisfying $\sum_{Q \sups e} p_Q = 1/2$ for all $e \in E(G)$.
This is sufficient to prove the lemma, as then
whp each $|H(e)| = (1/2 \pm n^{-1/3}) \tbinom{n}{q-r}$ by Chernoff bounds.

To construct $(p_Q)$, we start with the approximately correct assignment
$p'_Q = \tfrac12 \tbinom{n}{q-r}^{-1}$ for all $Q \in K^r_q(G)$, 
to which we then make a small adjustment using the local decoders.
As $K^r_n \sm G$ is $c$-bounded, each $e \in G$ is contained
in $(1 \pm 2kc) \tbinom{n}{q-r}$ cliques in $K^r_q(G)$.
Thus we can write $\sum_{Q \sups e} p'_Q = \tfrac12 - c_e$ 
with each $|c_e| < 3kc$.
Similarly, each $e \in G$ is contained
in $(1 \pm 2\tbinom{q+r}{r}c) \tbinom{n}{q}$ cliques in $K^r_{q+r}(G)$.
For each $e \in G$, we fix an arbitrary family $\mc{Z}_e$ of $\tfrac12 \tbinom{n}{q}$ such cliques.
For each $Z \in \mc{Z}_e$, we let $\Psi^Z$ be the copy of $\Psi$ as in Lemma \ref{lem:decode}
with $\pl \Psi^Z = N \cdot 1_e$ and each $|\Psi_Q| \le 2^q r!$.

We obtain $(p_Q)$ from $(p'_Q)$ by adding $c_e (N|\mc{Z}_e|)^{-1}  \Psi^Z$
for each $e \in G$ and $Z \in \mc{Z}_e$, so that each $p_Q = p'_Q + p''_Q$
with $p''_Q = \sum_e \sum_{Q \sub Z \in \mc{Z}_e} c_e (N|\mc{Z}_e|)^{-1} \Psi^Z_Q$.
For each $e,e' \in G$ and $Z \in \mc{Z}_e$ we have
$\sum_{e' \sub Q \sub Z} \Psi^Z_Q = \pl \Psi^Z_{e'} = N 1_{e=e'}$,
so for each $e' \in G$ we have $\sum_{Q \sups e'} p''_Q
= \sum_e \sum_{Z \in \mc{Z}_e} N 1_{e=e'} \cdot   c_e (N|\mc{Z}_e|)^{-1} = c_{e'}$,
so $\sum_{Q \sups e'} p_Q = \tfrac12 - c_{e'} + c_{e'} = \tfrac12$.

It remains to show that $p_Q \in [0,1]$. To see this we bound $p''_Q$,
noting that for each $Q$ there are at most $\tbinom{q+r}{r} \tbinom{n}{r}$
choices of $(e,Z)$ with $Q \sub Z \in \mc{Z}_e$,
for which the summand $c_e (N|\mc{Z}_e|)^{-1} \Psi^Z_Q$
is bounded by $3kc   (N \tfrac12 \tbinom{n}{q})^{-1} 2^q r!$,
so $|p''_Q| \le \tbinom{q+r}{r} \tbinom{n}{r} \cdot 6c  \tbinom{n}{q}^{-1} 2^q < \tfrac12 \tbinom{n}{q-r} = p'_Q$
as $c < 2^{-3q}$. Thus $p_Q \in [0,1]$, as required.
\end{proof}

\section{Martingale concentration} \label{sec:conc}

Here we discuss a martingale concentration inequality of Freedman,
which implies the concentration inequalities used earlier,
and will also be used to prove the nibble lemma in the next section.
We make the following standard definition,
which we only use for the natural filtration
$\mc{F}=(\mc{F}_i)_{i \ge 0}$ associated with a random process, 
where each $\mc{F}_i$ consists of all events 
determined by the history of the process up to step $i$.

\begin{defn}
Let $\OO$ be a (finite) probability space.
An \emph{algebra} (on $\OO$) is a set $\mc{F}$ of subsets of $\OO$ 
that includes $\OO$ and is closed under intersections 
and taking complements.
A \emph{filtration} (on $\OO$) is a sequence 
$\mc{F}=(\mc{F}_i)_{i \ge 0}$ 
of algebras such that $\mc{F}_i \sub \mc{F}_{i+1}$ for $i \ge 0$.
A sequence $A = (A_i)_{i \ge 0}$ of random variables on $\OO$
is a \emph{martingale} (for $\mc{F}$)
if each $A_i$ is {\em $\mc{F}_i$-measurable}
(all $\{\oO: A_i(\oO)<t\} \in \mc{F}_i$) 
and $\mb{E}(A_{i+1}|\mc{F}_i) = A_i$ for $i \ge 0$.
\end{defn}

Now we can state a general result of Freedman \cite[Proposition 2.1]{F}.

\begin{lemma} \label{freed} 
Let $(A_i)_{i=0}^t$ be a martingale for $\mc{F}=(\mc{F}_i)_{i \ge 0}$
with each $|A_{i+1}-A_i| \le b$. 
Let $E$ be the `bad' event that there 
exists $j > 0$ with $A_j \ge A_0 + a$ 
and $\sum_{i<j} \var[A_{i+1} \mid \mc{F}_i] \le v$.
Then $\mb{P}(E) \le \exp \brac{-\tfrac{a^2}{2(v+ab)}}$.
\end{lemma}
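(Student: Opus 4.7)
The plan is to prove Freedman's inequality by the standard exponential supermartingale argument. Write $D_i := A_i - A_{i-1}$ for the martingale differences, so $|D_i| \le b$ and $\mb{E}[D_i \mid \mc{F}_{i-1}] = 0$; write $V_j := \sum_{i<j} \var[A_{i+1} \mid \mc{F}_i]$ for the cumulative conditional variance up to step $j$.

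Fix $\lambda > 0$ (to be optimized later). The technical core is the elementary inequality $e^{\lambda x} \le 1 + \lambda x + \psi(\lambda) x^2$ valid for all $|x| \le b$, where $\psi(\lambda) := (e^{\lambda b} - 1 - \lambda b)/b^2$; this follows term-by-term from the Taylor expansion of $e^{\lambda x}$ using $|x|^{k-2} \le b^{k-2}$. Applying it to $D_j$, taking conditional expectations, and using $\mb{E}[D_j \mid \mc{F}_{j-1}] = 0$ and $\mb{E}[D_j^2 \mid \mc{F}_{j-1}] = \var[A_j \mid \mc{F}_{j-1}]$ gives $\mb{E}[e^{\lambda D_j} \mid \mc{F}_{j-1}] \le \exp(\psi(\lambda) \var[A_j \mid \mc{F}_{j-1}])$. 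Consequently $M_j := \exp(\lambda(A_j - A_0) - \psi(\lambda) V_j)$ is a supermartingale with $M_0 = 1$.

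Next, I would introduce the stopping time $\tau := \min\{j > 0 : A_j - A_0 \ge a \text{ and } V_j \le v\}$, setting $\tau = \infty$ if no such $j$ exists. Since $V_j$ is $\mc{F}_{j-1}$-measurable and $A_j$ is $\mc{F}_j$-measurable, $\tau$ is a genuine stopping time, so optional stopping applied to the bounded time $\tau \wedge t$ yields $\mb{E}[M_{\tau \wedge t}] \le 1$. On the event $E$ we have $\tau \le t$, $A_\tau - A_0 \ge a$, and $V_\tau \le v$, so $M_\tau \ge \exp(\lambda a - \psi(\lambda) v)$, and hence $\mb{P}(E) \le \exp(-\lambda a + \psi(\lambda) v)$ by Markov's inequality.

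Finally, to match the exact form of the stated bound, use the elementary estimate $\psi(\lambda) \le \lambda^2 / (2(1 - \lambda b))$ for $0 < \lambda b < 1$ (comparing Taylor coefficients $1/(k+2)! \le 1/2$ term-by-term), and then optimize by choosing $\lambda := a/(v + ab)$, which satisfies $\lambda b < 1$. Direct substitution gives $-\lambda a + \lambda^2 v/(2(1 - \lambda b)) = -a^2/(2(v+ab))$, as required. The main delicate point is verifying the supermartingale property and correctly handling the optional stopping argument with the filtration indexing (ensuring that the condition $V_j \le v$ in the definition of $\tau$ is indeed $\mc{F}_j$-measurable); the other steps are routine calculus and coefficient comparisons.
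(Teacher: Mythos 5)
Your proof is correct and follows essentially the same route as the paper: the exponential supermartingale $e^{\lambda(A_j-A_0)-\psi(\lambda)V_j}$ built from the elementary bound $e^{\lambda x}\le 1+\lambda x+\psi(\lambda)x^2$ for $|x|\le b$, then optional stopping and optimization over $\lambda$. Your choice $\lambda=a/(v+ab)$ together with $\psi(\lambda)\le \lambda^2/(2(1-\lambda b))$ in fact lands exactly on the stated exponent a little more cleanly than the paper's substitution $z=\log(1+a/v)$, but the argument is the same.
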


The standard technique for proving such concentration inequalities is bounding the moment generating function
and applying Markov's inequality; we include a short proof for completeness.

\begin{proof}
We can assume $b=1$, by replacing $(A_i,a,v)$ by $(A_i,a/b,v/b^2)$.
We consider $f(z) = (e^z-z-1)/z^2$, 
noting that $f'(z) = 2z^{-3}(e^z-1)+z^{-2}(e^z+1)>0$,
so for $t \in [-1,1]$ and $z>0$
we have $f(zt) \le f(z)$, so $e^{zt} \le 1+zt+(e^z-z-1)t^2$.
We set $t = A_{i+1} - A_i  \in [-1,1]$ 
and apply $\mb{E}^i := \mb{E}[ \cdot \vert \mc{F}_i ]$.
This gives
$\mb{E}^i e^{z(A_{i+1} - A_i)} \le 1 + (e^z-z-1)  \mb{E}^i  (A_{i+1} - A_i)^2
\le V_i :=  \exp [ (e^z-z-1)  \mb{E}^i  (A_{i+1} - A_i)^2 ]$,
so $M_j := e^{z(A_j-A_0)} \prod_{i<j} V_i^{-1}$
satisfies  $\mb{E}^i M_{i+1} \le M_i$.
We consider the stopping time $\tau$ when first $A_\tau \ge A_0 + a$,
or $\tau=t$ if $E$ does not hold. By the Optional Stopping Theorem,
$1 \ge \mb{E}[M_\tau] \ge \mb{P}(E) e^{za} e^{-(e^z-z-1)v}$.
We set $z=\log(1+a/v) = -\log(1-c) \ge c+c^2/2$, where $c:=a/(a+v)$,
giving $e^z v = a+v$ and $z(a+v) \ge  a + a^2/2(a+v)$,
so $\mb{P}(E) \le e^{(e^z-z-1)v - za} \le e^{-a^2/2(a+v)^2}$.
\end{proof}

\begin{rem} \label{rem:doob}
Lemma \ref{freed} also applies under the assumption that $(A_i)_{i=0}^t$ is a supermartingale,
meaning that each $\mb{E}(A_{i+1}|\mc{F}_i) \le A_i$. To see this, consider the Doob decomposition
$A_i = M_i + P_i$, where $M_0=A_0$, $P_0=0$, and for $i \ge 0$ we let
$M_{i+1} - M_i = A_{i+1} - \mb{E}(A_{i+1}|\mc{F}_i)$ and $P_{i+1} - P_i = \mb{E}(A_{i+1}|\mc{F}_i) - A_i$,
so that $M_i$ is a martingale and $P_i$ is a decreasing predictable process. 
Then $(M_i)_{i=0}^t$ is a martingale with each $|M_{i+1}-M_i| \le b$
and $\var[M_{i+1} \mid \mc{F}_i] = \var[A_{i+1} \mid \mc{F}_i]$,
and the conclusion of Lemma \ref{freed} for $M_i$ implies that for $A_i$.
\end{rem}

\begin{proof}[Proof of Lemma \ref{lem:pseudobin}.]
Let $X = \sum_{i=1}^n X_i$ where $X_i$ are random variables with each $|X_i| \le C$.
We let $\mc{F}$ be the natural filtration.
For (ii), we assume $\sum_i |X'_i| \le \mu$, where $X'_i := \mb{E}[|X_i| \mid \mc{F}_{i-1}]$.
Considering the martingale $A_i = \sum_{j \le i} (|X_i| - X'_i)$,
we have $\var[A_i \mid \mc{F}_{i-1}] \le \mb{E}[ X_i^2  \mid \mc{F}_{i-1}] \le CX'_i$,
so by Lemma \ref{freed} applied with $a=c\mu$, $b=2C$ and $v=C\mu$
we obtain $\mb{P}(|X|>(1+c)\mu) \le 2e^{-\mu c^2/2(1+2c)C}$.
For (i), we assume the $X_i$ are independent and $\mb{E}X=\mu$.
Then a similar calculation for $A_i = \sum_{j \le i} (X_i - \mb{E}X_i)$ and $-A_i$
gives $\mb{P}(|X-\mu|>c\mu) \le 2e^{-\mu c^2/2(1+2c)C}$.
\end{proof}

\section{The clique removal process} \label{sec:removal}

We concluding by proving the nibble lemma, 
which we state below in a more general form: for Lemma \ref{lem:nibble}
we substitute $\eps=1/3$, $\tT=\phi=1/2$ and obtain $L_1 := G \sm \bigcup D_1$
that is  $n^{-3k\rho}$-bounded as $\rho = (6k)^{-2}$.
For the proof we apply the clique removal process
(following the analysis of Bennett and Bohman \cite{BB}),
which provides a much more precise analysis of random greedy algorithms
than that given in Section \ref{sec:rga}, leading to better bounds than standard nibble arguments,
which only give boundedness $n^{-O(\eps/k^2)}$.

\begin{lemma} \label{lem:nibble+} 
Suppose $G \sub K^r_n$ and $H \sub K^r_q(G)$ with $|G| = \phi \tbinom{n}{r}$
and $|H(e)| =  (1 \pm n^{-\eps}) \tT \tbinom{n}{q-r}$ for all $e \in V(H) = E(G)$,
where $\eps < 0.4$, $\tT \ge n^{-1/3}$, $\phi \ge n^{-r/3}$ and $n>n_0$ as in \eqref{param}. 
Then there is a set $H' \sub H$ of edge-disjoint $q$-cliques
such that $G \sm \bigcup H'$ is $3n^{-\eps/3k}$-bounded, where $k:=\tbinom{q}{r}$.
\end{lemma}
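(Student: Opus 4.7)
The plan is to analyze the random greedy clique removal process: at step $i$, we pick a uniformly random clique $Q_i$ from the set $H_i$ of currently available cliques (those in $H$ edge-disjoint from $Q_1,\dots,Q_{i-1}$), add it to $H'$, and form $H_{i+1}$ by deleting every clique that shares an edge with $Q_i$. The selected cliques are automatically edge-disjoint, so it suffices to control the uncovered graph $G_i := G \sm \bigcup_{j \le i} Q_j$ at an appropriate stopping time.

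First I would identify the tracking variables and their heuristic trajectories via the differential equation method. For each surviving $e \in G_i$ let $d_e := |H_i(e)|$, and for each $(r-1)$-set $f$ let $D_f := |G_i(f)|$. Parametrizing the process by a continuous time $t$, a short calculation of conditional expected one-step changes yields a closed system of ODEs for $\hat d(t)$ and $\hat D(t)$ whose solutions take the form $\hat d(t) = \tT \tbinom{n}{q-r} \cdot y(t)^{k}$ and $\hat D(t) = \phi n \cdot y(t)$ for a common survival function $y(t)$ decreasing from $1$ to $0$; the natural normalization makes $y(t) = e^{-t}$ exponential.

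Next I would run the process up to the stopping time $t^* := \tfrac{\eps}{3k}\log n$, at which $y(t^*) = n^{-\eps/3k}$, and show that whp, throughout $[0,t^*]$, every $d_e(t)$ and every $D_f(t)$ stays within a multiplicative $(1 \pm n^{-\eps})$-factor of its trajectory. For each tracked variable $X$, I would build a pair of supermartingales of the form $\pm(X - \hat X) - E(t)$ for a carefully chosen error function $E(t)$, and apply Freedman's inequality (Lemma \ref{freed}): one-step increments are $O(k\tbinom{q}{r-1})$ in the appropriate normalization, and the accumulated conditional variance over $[0,t^*]$ is of order $\hat X$, so the resulting tail is much smaller than $e^{-n/10}$. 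A union bound over the $O(n^r)+O(n^{r-1})$ variables closes the loop, and the final leave satisfies $D_f \le 3 n^{-\eps/3k} \cdot n$ for every $f$, which is the required boundedness.

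The main obstacle is the concentration step. Near the end of the window, $\hat d(t^*)$ has shrunk by a factor $n^{-\eps/3}$, so a naive martingale bound at scale $\sqrt{\hat d}$ is catastrophically larger than the tolerated error $n^{-\eps}\hat d$; this is precisely where standard nibble arguments lose and only give the weaker boundedness $n^{-O(\eps/k^2)}$. Following Bennett--Bohman, the fix is to exploit the \emph{self-correcting} property of $d_e$ and $D_f$: when $X$ drifts above $\hat X + E(t)$ the conditional one-step drift is strongly negative, and symmetrically on the lower side, so $E(t)$ can be designed to grow slowly enough to preserve the relative accuracy $n^{-\eps}$ throughout $[0,t^*]$. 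Designing $E(t)$ delicately — tight enough for self-correction to kick in at both boundaries, but loose enough for Freedman's inequality to apply with the claimed exponent — is the real technical work, and is what yields the sharper boundedness $n^{-\eps/3k}$ needed downstream.
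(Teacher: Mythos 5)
Your proposal follows essentially the same route as the paper: the uniform random greedy clique removal process run until the survival parameter reaches $n^{-\eps/3k}$, tracking the clique degrees $|H_i(e)|$ and the codegrees $|G_i(f)|$ (the paper also tracks the global count $|H(i)|$, which you need to close the system since selection probabilities are $|H_e(i)|/|H(i)|$), with self-correcting trend hypotheses and Freedman's inequality in the style of Bennett--Bohman. The only caveat is that your stated target of maintaining a relative error $(1\pm n^{-\eps})$ throughout is slightly stronger than what the paper's error functions actually achieve (they degrade to roughly $n^{-2\eps/3}$ times logarithmic factors), but since you leave the design of $E(t)$ open this is a matter of calibration rather than a gap in the approach.
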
 

The density assumption on $G$ is somewhat redundant,
as the assumption on $|H(e)|$ implies a lower bound on $\phi$ by Kruskal-Katona, 
but we omit this argument as we know anyway that $G$ is dense in our intended application.

\begin{proof}
We construct $H'$ by the clique removal process, 
where in each step we choose a uniformly random $q$-clique of $H$ 
among those edge-disjoint from all previous choices.
The $r$-graph $G^i$ after $i$ steps satisfies
$|G^i|=|G|-ki = p|G|$ with $k=\tbinom{q}{r}$, $p=1-kt$ and $t=i/|G|$.
We will see that it whp resembles a $p$-random subgraph of $G$.
We write $H(i) = H \cap K^r_q(G^i)$ and $H_e(i) = \{Q: e \in Q \in H(i) \}$ for any $e \in G^i$.
By assumption $|H_e(0)|=|H(e)| =  (1 \pm n^{-\eps})D$ with $D := \tT \tbinom{n}{q-r}$,
so $k|H(0)| = \sum_{e \in G} |H_e(0)| =  (1 \pm n^{-\eps}) D |G|$.
 
We will show whp for $0 \le i \le i^* := (1-n^{-\eps/3k})|G|/k$ that
$|H(i)| = p^k D|G|/k \pm e_H$ and $|H_e(i)| = p^{k-1}D \pm e_D$, 
\[ \text{ where } \ \ e_H = 6(1-k\log p)^2 bD|G|, \quad e_D = 2(1-k\log p) b^c D,
\quad b :=n^{-\eps}, \quad c:=2/3. \]
(We could substitute the values of $b,c$ throughout, 
but will keep the notation general to clarify how they were chosen.
The stated estimates for $|H(0)|=|H|$ and $|H_e(0)|=|H(e)|$ 
hold as $e_H(0) = 6n^{-\eps}D|G|$ and $e_D(0) = 2n^{-2\eps/3} D$.)

We just show the upper bounds, as the lower bounds are similar.
Equivalently, we need $H^+(i) \le 0$ and $H^+_e(i) \le 0$,
where $H^+(i) := |H(i)| - p^k D|G|/k - e_H$ 
and $H^+_e(i) := |H_e(i)| - p^{k-1}D - e_D$.

We will verify the following hypotheses, 
in which $\mb{E}'$ denotes conditional expectation
given the history of the process, 
and we can assume that these bounds hold at earlier steps
by considering $i<\tau$, where the stopping time $\tau$
is the first step where the bounds fail, or $\infty$ if there is no such step.
We also assume $i \le i^*$, so $p^k \ge b^{1/3} = n^{-\eps/3}$.

\emph{Trend:} if $H^+(i) \ge -bD|G|$ then $\mb{E}' H^+(i+1) \le H^+(i)$;
  if $H^+_e(i) \ge -b^c D$ then $\mb{E}' H^+_e(i+1) \le H^+_e(i)$.
  
\emph{Boundedness:}  $|H^+(i+1)-H^+(i)| \le 2kD$
 and $|H^+_e(i+1)-H^+_e(i)| \le 2qn^{q-r-1}$. 
 
To see boundedness, note that we select some clique $Q$, as $i<\tau$ we have 
$||H(i+1)|-|H(i)|| \le \sum_{e \in Q} |H_e(i)| \le k \max_e |H_e| <1.1kD$
and each $||H_e(i+1)|-|H_e(i)|| \le \sum_{e' \in Q} |H_{e'}(i) \cap H_e(i)| \le 1.1qn^{q-r-1}$.
(The much smaller changes in the deterministic functions $H^+(i)-|H(i)|$ and $H_e^+(i)-|H_e(i)|$
are covered by changing $1.1$ to $2$.)

Next we assume the trend hypothesis and apply Lemma \ref{freed}
(actually Remark \ref{rem:doob}) to show that whp $\tau=\infty$.

For $H^+(i)$, we have $|H^+(i+1)-H^+(i)| = O(D)$
and $\sum_i \var[H^+(i+1)\mid \mc{F}_i] = \sum_i O(D^2) = O(|G|D^2)$,
so $\mb{P}(H^+(i)-H^+(0) \ge bD|G|)$ has exponential decay as $b^2 = n^{-2\eps} \gg \phi^{-1} n^{-r}$. 
For each $H^+_e(i)$, we have $|H^+_e(i+1)-H^+_e(i)| = O(n^{q-r-1})$
and  $\sum_i \var[H^+_e(i+1) \mid \mc{F}_i] 
\le \max_i |H^+_e(i+1)-H^+_e(i)| \sum_i \mb{E}[  |H^+_e(i+1)-H^+_e(i)|  \mid \mc{F}_i] 
\le O(n^{q-r-1} D)$, so $\mb{P}(H^+(i)-H^+(0) \ge b^c D)$ has exponential decay 
as $b^{2c} = b^{4/3} = n^{-4\eps/3} \gg (\tT n)^{-1}$, recalling  $\eps < 0.4$ and $\tT \ge n^{-1/3}$.

To verify the trend hypothesis, we consider the estimate
\[ \mb{E}'[ |H(i+1)|-|H(i)| ] =  - \sum_{e \in G^i} |H_e(i)|^2/|H(i)| + O(n^{q-r-1}), \]
which holds as for each edge $e \in G^i$ we select a clique containing $e$
with probability $|H_e(i)|/|H(i)|$ and then remove $|H_e(i)|$ cliques containing $e$;
the error term accounts for cliques that share more than one edge with the selected clique.
As $i<\tau$, each $(|H_e(i)| - p^{k-1}D)^2 \le e_D^2$.
Consider the random variable $X=|H_e(i)|$ for uniformly random $e \in G^i$.
Then $e_D^2 \ge \var(X) = |G^i|^{-1} \sum_e |H_e(i)|^2 - (|G^i|^{-1} \sum_e |H_e(i)|)^2$,
giving $\sum_e |H_e(i)|^2 = |G^i|^{-1} (k |H(i)|)^2 \pm |G^i| e_D^2$, so
\[ \mb{E}'[ |H(i+1)|-|H(i)| ] = - k^2|H(i)|/|G^i| \pm   |G^i| e_D^2/|H(i)| + O(n^{q-r-1}). \]
Assuming $H^+(i) \ge -bD|G|$, that is $|H(i)| \ge p^k D|G|/k + e_H - bd|G|$, 
and recalling $|G^i|=p|G|$, we obtain 
\[ \mb{E}'[ |H(i+1)|-|H(i)| ]
\le -kDp^{k-1} - k^2 e_H / p|G| + k^2 bD/p + (k+o(1)p^{1-k} e_D^2/D,\]
where in the last term we use $k|H(i)|/|G^i| \sim p^{k-1} D$ 
and $p^{1-k} e_D^2/D \gg  n^{q-r-1}$ as $b^{2c} = b^{4/3} \gg (\tT n)^{-1}$. 
As $e_H$ is increasing
and $p(t+|G|^{-1})^k - p(t)^k = (1-kt-k|G|^{-1})^k - (1-kt)^k = -(1+O(|G|^{-1}))p^{k-1} k^2|G|^{-1}$, 
\begin{align*} 
\mb{E}'[ H^+(i+1)-H^+(i) ] & \le \mb{E}'[ |H(i+1)|-|H(i)| ] + (1+O(|G|^{-1}))p^{k-1}kD \\
& \le - k^2 e_H / p|G| + k^2 bD/p + (k+1)p^{1-k} e_D^2/D \le 0,
\end{align*}
using $|G|^{-1} p^{k-1}kD \ll p^{1-k} e_D^2/D$ 
and $k^2 p^{-1}(e_H/|G|-bD) = k^2 p^{-1}( 6(1-k\log p)^2-1) bD 
\ge (k+1)p^{1-k} e_D^2/D = (k+1) p^{1-k} \cdot 4(1-k\log p)^2 b^{2c} D$,
as $b^{2c-1} = b^{1/3} \le p^k$. 

This verifies the trend hypothesis for $H^+(i)$. For $H^+_e(i)$ with $e \in G^i$ we consider
\[ \mb{E}'[ |H_e(i+1)|-|H_e(i)| ] =  - |H(i)|^{-1} \sum_{Q \in H_e(i)} \sum_{e' \in Q \sm \{e\}} (|H_{e'}(i)| + O(n^{q-r-1})), \]
which holds as for each clique $Q \in H_e(i)$ and $e' \in Q \sm \{e\}$ we select a clique containing $e'$
and so remove $Q$ with probability $|H_{e'}(i)|/|H(i)|$; the error term accounts for $Q \in H_e(i)$ 
sharing more than one edge with the selected clique.
Assuming $H^+_e(i) \ge -b^c D$, that is $|H_e(i)| \ge p^{k-1} D + e_D - b^c D$, we obtain
\begin{align*} 
& \mb{E}'[ |H_e(i+1)|-|H_e(i)| ]  \le  - |H(i)|^{-1} ( p^{k-1} D + e_D - b^c D) (k-1) ( p^{k-1} D - e_D - O(n^{q-r-1}) ) \\
& \le k(k-1) p^{k-2} D |G|^{-1} + (1+o(1)) b^c D  \cdot (k-1)  p^{k-1} D \cdot (p^k D|G|/k)^{-1} + O(e_H/p^2|G|^2),
\end{align*}
using $|H(i)| = p^k D|G|/k \pm e_H$ as $i<\tau$, 
and $e_D \ll p^{k-1} D$ as $b^c = b^{2/3} \le p^k$, 
and $(q-r)! D^{-1} n^{q-r-1} \sim (\tT n)^{-1} \ll b^c = o(1)$.
As $e_D(t+|G|^{-1}) - e_D(t) = (1+O(|G|^{-1}))|G|^{-1}e_D'(t) =  (1+O(|G|^{-1}))|G|^{-1} \cdot 2k^2 b^c D/p$
and $p(t+|G|^{-1})^{k-1} - p(t)^{k-1} = -(1+O(|G|^{-1}))p^{k-2} k(k-1)|G|^{-1}$, we deduce
\begin{align*} 
\mb{E}'[ H^+_e(i+1)-H^+_e(i) ] & \le \mb{E}'[ |H_e(i+1)|-|H_e(i)| ] + (1+O(|G|^{-1}))|G|^{-1}D (  p^{k-2} k(k-1) -  2k^2 b^c /p)   \\
& \le   -k^2 b^c D/ p |G| + O(e_H/p^2|G|^2) + O(Dp^{k-2}/|G|^2) \le 0,
\end{align*}
using $p^k b^{-c} \ll |G|$ and $b^{1-c} = b^{1/3} \le p^2$. 
This establishes the trend hypothesis for $H^+_e(i)$.

Now we show that whp $G \sm \bigcup H'$ is $3n^{-\eps/3k}$-bounded.
We show whp for each $f \in \tbinom{[n]}{r-1}$ that $|G^i(f)| = p|G(f)| \pm e_F$, where $e_F := 2b^{c/2} n$.
For the upper bound $G^i(f)^+ := |G^i(f)| - p|G(f)| - e_F \le 0$ we verify the following hypotheses.

\emph{Trend:} if $G^i(f)^+ \ge -b^{c/2} n$ then $\mb{E}' G^{i+1}(f)^+ \le G^i(f)^+$.

\emph{Boundedness:}  $|G^{i+1}(f)^+ - G^i(f)^+| \le 2q$.

Boundedness is clear, as removing a clique $Q$ only affects $G^i(f)$ if $f \sub Q$,
in which case it removes $q-r+1$ edges containing $f$. Assuming the trend hypothesis,
the required bound follows whp from Lemma \ref{freed}, as 
$\sum_i \var[G^{i+1}(f)^+\mid \mc{F}_i] 
\le \max_i |G^{i+1}(f)^+ - G^i(f)^+| \sum_i \mb{E}[  |G^{i+1}(f)^+ - G^i(f)^+|  \mid \mc{F}_i] 
\le O(n)$, so $\mb{P}(H^+(i)-H^+(0) \ge b^{c/2} n)$ has exponential decay 
as $b^c = b^{2/3} \gg n^{-1}$. 
For the trend hypothesis, if $G^i(f)^+ \ge -b^{c/2} n$ then
\begin{align*} 
& \mb{E}'[ |G^{i+1}(f)|-|G^i(f)|]  = - \sum_{e: f \sub e \in G^i} |H_e(i)|/|H(i)| \\
& \le -( p|G(f)| + e_F - b^{c/2} n ) ( p^{k-1}D - e_D ) / ( p^k D|G|/k + e_H ) \\
& \le  - k |G(f)|/|G| - k(e_F - b^{c/2} n)/p|G| 
+ O(e_D) \tfrac{|G(f)|}{  p^{k-1} D|G| } + O(e_H) \tfrac{|G(f)|}{  p^k D|G|^2 }. 
\end{align*}
The one-step change in $p|G(f)|+e_F$ is $-k|G|^{-1}|G(f)|$, so
\begin{align*} 
& \mb{E}'[ G^{i+1}(f)^+ - G^i(f)^+] =  \mb{E}'[ |G^{i+1}(f)|-|G^i(f)|]  + k|G|^{-1}|G(f)| \\
& \le  - kb^{c/2}  n/p|G| + O(e_D p + e_H |G|^{-1}  ) n (p^k D|G|)^{-1}  \le 0,
\end{align*}
using $pe_H \ll b^{c/2} p^k D |G|$ as $b^{1-c/2} = b^{2/3} \le p^k$ 
and $p^2 e_D \ll b^{c/2} p^k D$ as $b^{c/2} = b^{1/3} \le p^k$.

At the end, for each $f \in \tbinom{[n]}{r-1}$ we have
$|(G \sm \bigcup H')(f)| = |G^{i^*}(f)| = n^{-\eps/3k} |G(f)| \pm e_F \le 3n^{1-\eps/3k}$.
\end{proof}

\section{Quantitative considerations} \label{sec:quant}

To avoid clutter, we have not explicitly kept track of the minimum value of $n$ 
needed for various inequalities throughout the paper, as our main purpose was
to present a new short self-contained proof of the existence of designs.
However, it is also interesting to quantify the dependence of $n$ on $q$ and $r$.
The bottleneck for our choice of $n_0$ in \eqref{param} 
was the size of the clique exchange configuration, 
for which we achieved $|\OO| \le 3(2q)^r k^2$ in Lemma \ref{lem:OO}.
The tightest inequality on $n_0$ used in the proof was $n_0 > (2^{5q} (4q)^r u)^{10/\aA}$
in Lemma \ref{lem:flat}, which holds as $\aA^{-1} =  (2q)^r (6k)^2$,
$u = 20q^2 \aA^{-1} |\OO|$ and $n_0 = (4q)^{90q/\aA}$.
We need to take  $\aA^{-1}$ larger than $\OO$ due to density factors
$n^{-\aA|\OO|}$ appearing in the proof of Lemma \ref{lem:extcol}.
The  constraint on $\aA$ from the nibble is less restrictive:
we take $\rho = (6k)^{-2}$ so that $L_1$ in Lemma \ref{lem:nibble} is $n^{-3k\rho}$-bounded,
and then $k\aA < \rho/2$ is small enough for Lemma \ref{lem:Q0'}.
The bounds on $n$ required for concentration inequalities are much less restrictive,
and the tightest constraint on $n$ for the random greedy algorithms comes from
the assumption $\tT > n^{-1/2}$ in Lemma \ref{lem:process}.
There are several other places where we need $n$ to grow exponentially in $q/\aA$,
as should be expected, as the boundedness of the absorber $A$ in Lemma \ref{lem:A} is $n^{-\aA/4}$,
which we bound by $2^{-3q}$ to apply Lemma \ref{lem:reg}. Thus we obtain 
$\log n_0 = O(\aA^{-1}q \log q) = O(k^2 q^{r+1}\log q)$, whereas it would be
interesting to show that one can take $\log n_0 = O(k)$ where $k=\tbinom{q}{r}$, 
as this is a natural barrier for any probabilistic construction.


\begin{thebibliography}{99}

\bibitem{BB} P. Bennett and T. Bohman,
A natural barrier in random greedy hypergraph matching, arXiv:1210.3581,
{\em Combin.~Probab.~Comput.} 28:816--825 (2019).

\bibitem{DP} M. Delcourt and L. Postle,
Refined absorption:~a new proof of the existence conjecture, arXiv:2402.17855.

\bibitem{F} D. A. Freedman, On tail probabilities for martingales, 
{\em Ann.~Probab.} 3:100--118 (1975).

\bibitem{GKLO} S. Glock, D. K\"uhn, A. Lo and D. Osthus,
The existence of designs via iterative absorption:
hypergraph $F$-designs for arbitrary $F$,
arXiv:1611.06827 and arXiv:1706.01800,
{\em Mem.~Amer.~Math.~Soc.} 284 (2023).

\bibitem{K} P. Keevash, The existence of designs, arXiv:1401.3665.

\bibitem{KeSS} P. Keevash, A. Sah and M. Sawhney, 
The existence of subspace designs, arXiv:2212.00870.

\bibitem{W} R. M. Wilson, 
The necessary conditions for t-designs are sufficient for something, 
{\em Utilitas Math.} 4:207--215 (1973).

\end{thebibliography}
\end{document}